\numberwithin{equation}{section}
\numberwithin{figure}{section}
\theoremstyle{plain}
\newtheorem{thm}{Theorem}[section]
\newtheorem{lem}[thm]{Lemma}
\newtheorem{cor}[thm]{Corollary}
\newtheorem{prop}[thm]{Proposition}
\theoremstyle{definition}
\newtheorem{defn}[thm]{Definition}
\theoremstyle{remark}
\begin{document}

\title{Factorization of Matrices of Quaternions}

\author{Terry A. Loring}

\address{University of New Mexico, Department of Mathematics and Statistics,
Albuquerque, New Mexico, 87131, USA}

\subjclass{15B33}

\keywords{Kramers pair, matrix decompositions,
dual operation, quaternions}

\begin{abstract}
We review known factorization
results in quaternion matrices. Specifically, we derive the Jordan
canonical form, polar decomposition, singular value decomposition,
the QR factorization. We prove there is a Schur factorization for
commuting matrices, and from this derive the spectral theorem. We
do not consider algorithms, but do point to some of the numerical
literature. 

Rather than work directly with matrices of quaternions, we work
with complex matrices with a specific symmetry based on the dual
operation.  We discuss related results regarding complex matrices
that are self-dual or symmetric, but perhaps not Hermitian.

\end{abstract}

\maketitle

\section{The quaternionic condition}

It is possible to prove many factorization results for
matrices of quaterions by deriving them from their
familiar complex counterparts.  The amount of additional
work is surprisingly small.

There are more abstract factorization theorems that apply
to real $C^{*}$-algebras, along the lines of the delightful
paper by Pedersen on factorization in (complex)
$C^{*}$-algebras.  We are dealing here with more basic
questions, involving only finite-dimensional linear algebra.
These are never (hardly ever?) addressed in basis linear algebra
texts, creating the impression that linear algebra over the
quaternions is more difficult than it really is.

There are serious hazards within linear algebra over the
quaternions.  One can be lured to difficult questions of determinants
and handedness of the spectrum, leaving perhaps victorious, but
with the impression that all of linear algebra over the
quaternions is going to be difficult.

We mathematicians are well-advised, when upon such uneven ground,
to seek guidance from physicists.  Such guidance helped select
the topics, and helped suggest notation.   

One topic was included for reasons of elegance, the Jordan canonical
form.  For practical purposes, the Schur decomposition will
generally suffice.  We prove that as well.

It is assumed the reader is familiar with such things
as the spectral theorem and the functional calculus for
normal matrices.  It is not assumed that the reader knows
about $C^{*}$-algebras or physics, but these topics are
mentioned incidentally.

Let $\mathbb{H}$ denote the algebra of quaternions
\[
\mathbb{H}=\left\{ \left.a+b\hat{i}+c\hat{j}+d\hat{k}\right|a,b,c,d\in\mathbb{R}\right\} .
\]
This is an algebra over $\mathbb{R}$. The canonical embedding 
$ \mathbb{C}\hookrightarrow\mathbb{H} $
sending $1$ to $1$ and $i$ to $\hat{i}$ does not make $\mathbb{H}$
into an algebra over $\mathbb{C}$. The trouble is that the embedding
is not central. An additional algebraic operation is the involution
\[
\left(a+b\hat{i}+c\hat{j}+d\hat{k}\right)^{*}=a-b\hat{i}-c\hat{j}-d\hat{k}
\]
which satisfies several axioms, including the following.
\[
\alpha\in\mathbb{R},\ x\in\mathbb{H}\ \implies\ \left(\alpha x\right)^{*}=\alpha x^{*}
\]
\[
x^{*}x=0\ \implies\ x=0
\]
\[
\left(xy\right)^{*}=y^{*}x^{*}
\]

Turning to matrices, the algebra $\mathbf{M}_{N}(\mathbb{H})$ has
the expected structure of a unital $\mathbb{R}$-algebra, plus the
involution 
\[
\left[a_{ij}\right]^{*}=\left[a_{ji}^{*}\right]
\]
of conjugate-transpose. We would like to know
\[
A^{*}A=0\ \implies\ A=0
\]
and
\[
AB=I\ \implies\ BA=I.
\]
We will quickly prove these after we consider a representation of
$\mathbf{M}_{N}(\mathbb{H})$ on $\mathbb{C}^{2N}$.

We have an obvious representation of $\mathbf{M}_{N}(\mathbb{H})$
on $\mathbb{H}^{N}$ and quickly notice that since left and right
scalar multiplication of $\mathbb{H}^{N}$ disagree, we have both
left-eigenvalues $\lambda\in\mathbb{H}$ solving
\[
A\mathbf{v}=\lambda\mathbf{v}
\]
and right-eigenvalues $\mu\in\mathbb{H}$ solving
\[
A\mathbf{v}=\mathbf{v}\mu.
\]
A glance at the survey \cite{zhang1997quaternions} reveals that many
difficulties arise.

Ponder the situation for real matrices, in $\mathbf{M}_{N}(\mathbb{R})$.
To the chagrin of undergraduates, it is most natural to consider the
representation of $\mathbf{M}_{N}(\mathbb{R})$ on $\mathbb{C}^{N}$.
Given a real orthogonal matrix $O$ we get the full picture of why
it might not diagonalize in $\mathbf{M}_{N}(\mathbb{R})$ when we
look at the complex eigenvalues. It is at this point that we are implicitly
letting $\mathbf{M}_{N}(\mathbb{R})$ act on $\mathbb{C}^{N}$. 

We do well in the case of the quaternions to regard $\mathbf{M}_{N}(\mathbb{H})$
as represented on $\mathbb{C}^{2N}$, or in more modern terms that
de-emphasizes the role of vectors, via a certain embedding
\[
\chi:\mathbf{M}_{N}(\mathbb{H})\rightarrow\mathbf{M}_{2N}(\mathbb{C}).
\]
This is a very old trick. For $N>1$ it appears to have
been noticed first by H.~C.~Lee \cite{Lee_JordanFormQuaternions}. 

\begin{defn}
Given two \emph{complex} $N$-by-$N$ matrices $A$ and $B$ we set
\[
\chi\left(A+B\hat{j}\right)=\left[\begin{array}{cc}
A & B\\
-\overline{B} & \overline{A}\end{array}\right].
\]
\end{defn}

We generally study complex matrices in the image of $\chi$
and only at the end of our calculations do we draw conclusions about
\emph{$\mathbf{M}_{N}(\mathbb{H})$}. This is in keeping with applications
in quantum mechanics, where Hilbert space is always complex and time-reversal
symmetry will often be incorporated in the conjugate linear operation
$\mathcal{T}$.  We define
$\mathcal{T}:\mathbb{C}^{2N}\rightarrow\mathbb{C}^{2N}$
by
\begin{equation}
\mathcal{T}\left(\left[\begin{array}{c}
\mathbf{v}\\
\mathbf{w}\end{array}\right]\right)=\left[\begin{array}{c}
-\overline{\mathbf{w}}\\
\overline{\mathbf{v}}\end{array}\right].
\label{eq:time-reversal_anit-operator}
\end{equation}
See \cite[\S 2.2]{mehta2004random} for details on when $\mathcal{T}$
is relevant to time reversal symmetry.

A less generous description our our approach is we plan to study complex matrices
with a certain symmetry that is useful in physics, and then sell the
same results to pure mathematicians by re-branding them as theorems
about matrices of quaternions.

The operator $\mathcal{T}$ is relatively well behaved, despite
being only conjugate linear. It preserves orthogonality, and indeed 
\begin{equation}
\left\langle \mathcal{T}\xi,\mathcal{T}\eta\right\rangle 
=
\overline{\left\langle \xi,\eta\right\rangle }.
\label{eq:TRoperatorPreservesOrtho}
\end{equation}
Also
\begin{equation}
\mathbf{\xi}\perp\mathcal{T}\xi
\label{eq:TR-orthogonality}
\end{equation}
which is another one-line calculation.

\begin{lem}
The mapping $\chi$ is well-defined, is an $\mathbb{R}$-algebra homomorphism,
is one-to-one and satisfies
\[
\left(\chi\left(Y\right)\right)^{*}=\chi\left(Y^{*}\right).
\]
\end{lem}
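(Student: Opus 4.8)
The plan is to verify each of the four claimed properties of $\chi$ by direct computation, exploiting the fact that every quaternion matrix can be written uniquely as $A + B\hat{j}$ with $A, B$ complex, and that the defining formula for $\chi$ is $\mathbb{R}$-linear in the pair $(A,B)$. First I would address \emph{well-definedness}: the decomposition $Y = A + B\hat{j}$ with $A,B \in \mathbf{M}_N(\mathbb{C})$ is unique because the canonical copy of $\mathbb{C}$ inside $\mathbb{H}$ together with $\hat{j}$ gives a basis-like decomposition $\mathbb{H} = \mathbb{C} \oplus \mathbb{C}\hat{j}$ as a real (indeed left-$\mathbb{C}$) vector space; hence $A$ and $B$ are determined by $Y$, and the block matrix on the right-hand side is unambiguously defined. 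The $\mathbb{R}$-linearity of $\chi$ is then immediate from the formula, since conjugation $\overline{(\cdot)}$ is $\mathbb{R}$-linear and the block construction is visibly additive and commutes with real scalars.

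The multiplicativity is the one step I expect to be the main obstacle, though it is more bookkeeping than genuine difficulty. The key algebraic fact is the commutation rule $\hat{j}\,z = \overline{z}\,\hat{j}$ for $z \in \mathbb{C}$ (equivalently $\hat{j} B = \overline{B} \hat{j}$ for a complex matrix $B$), together with $\hat{j}^2 = -1$. Using these, I would expand
\[
(A + B\hat{j})(C + D\hat{j}) = AC + AD\hat{j} + B\hat{j}C + B\hat{j}D\hat{j}
= (AC - B\overline{D}) + (AD + B\overline{C})\hat{j},
\]
and then separately multiply the two block matrices $\chi(A+B\hat j)$ and $\chi(C+D\hat j)$ to check that the $(1,1)$ block is $AC - B\overline{D}$, the $(1,2)$ block is $AD + B\overline{C}$, the $(2,1)$ block is $-\overline{AD + B\overline{C}}$, and the $(2,2)$ block is $\overline{AC - B\overline{D}}$; each of these is a routine match once one uses $\overline{\overline{X}} = X$ and $\overline{XY} = \overline{X}\,\overline{Y}$. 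That the identity goes to the identity is clear from the formula with $A = I$, $B = 0$.

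For \emph{injectivity}: if $\chi(A + B\hat{j}) = 0$ then reading off the $(1,1)$ and $(1,2)$ blocks gives $A = 0$ and $B = 0$, so $A + B\hat{j} = 0$; combined with $\mathbb{R}$-linearity this shows $\ker\chi = 0$. Finally, the $*$-compatibility: the involution on $\mathbb{H}$ satisfies $(A + B\hat{j})^* = A^* - \hat{j}^{\,*}B^* = A^* + \hat j B^* = A^* + \overline{B^*}\hat j$... more carefully, using $\hat j^* = -\hat j$ and the commutation rule one computes $(A+B\hat j)^* = A^* + \hat j^{\,*} B^{*}$ rewritten in standard form, and then one checks that conjugate-transposing the block matrix $\left[\begin{smallmatrix} A & B \\ -\overline{B} & \overline{A}\end{smallmatrix}\right]$ — recalling that the quaternionic conjugate-transpose $Y^*$ uses $\hat{(\cdot)}^*$ entrywise — produces exactly $\chi(Y^*)$. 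Concretely I would verify $\left(\left[\begin{smallmatrix} A & B \\ -\overline{B} & \overline{A}\end{smallmatrix}\right]\right)^* = \left[\begin{smallmatrix} A^* & -\overline{B}^{\,*} \\ B^* & \overline{A}^{\,*}\end{smallmatrix}\right] = \left[\begin{smallmatrix} A^* & -\overline{(-B^*)} \\ -(-\overline{B^*})... \end{smallmatrix}\right]$ matches $\chi$ applied to the standard-form expression for $Y^*$; this is again a short matching of four blocks using $\overline{X^*} = \overline{X}^{\,*} = (\overline X)^*$. Once all four properties are checked, the two desired consequences $A^*A = 0 \Rightarrow A = 0$ and $AB = I \Rightarrow BA = I$ in $\mathbf{M}_N(\mathbb{H})$ follow by transporting the corresponding (known) facts for complex matrices through the faithful $*$-homomorphism $\chi$.
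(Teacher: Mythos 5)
Your proposal is correct and follows essentially the same route as the paper: decompose $Y=A+B\hat{j}$ uniquely, use $B\hat{j}=\hat{j}\overline{B}$ and $\hat{j}^{2}=-1$ to expand the product, and match blocks. The only blemish is that your $*$-compatibility computation trails off; it does close, since entrywise $(\alpha+\beta\hat{j})^{*}=\overline{\alpha}-\beta\hat{j}$ gives $\left(A+B\hat{j}\right)^{*}=A^{*}-B^{\mathrm{T}}\hat{j}$, whose image under $\chi$ is exactly the conjugate-transpose of the block matrix.
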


\begin{proof}
Notice that every quaternion $q$ can be written as $\alpha+\beta\hat{j}$
with $\alpha$ and $\beta$ in $\mathbb{C}$. This makes the map well-defined.
It is clearly one-to-one and $\mathbb{R}$-linear. Notice $\beta\hat{j}=\hat{j}\bar{\beta}$
for complex number $\beta$ and so $B\hat{j}=\hat{j}\overline{B}$.
Therefore
\begin{align*}
\chi\left(A+B\hat{j}\right)\chi\left(C+D\hat{j}\right) & =\left[\begin{array}{cc}
A & B\\
-\overline{B} & \overline{A}\end{array}\right]\left[\begin{array}{cc}
C & D\\
-\overline{D} & \overline{C}\end{array}\right]\\
 & =\left[\begin{array}{cc}
AC-B\overline{D} & AD+B\overline{C}\\
-\overline{B}C-\overline{A}\overline{D} & -\overline{B}D+\overline{A}\overline{C}\end{array}\right]\\
 & =\chi\left(\left(AC-B\overline{D}\right)+\left(AD+B\overline{C}\right)\hat{j}\right)\\
 & =\chi\left(\left(A+B\hat{j}\right)\left(C+D\hat{j}\right)\right)
 \end{align*}
and 
\[
\left(\chi\left(A+B\hat{j}\right)\right)^{*}=\left[\begin{array}{cc}
A & B\\
-\overline{B} & \overline{A}\end{array}\right]^{*}
=\chi\left(A^{*}-B^{\mathrm{T}}\hat{j}\right)=\chi\left(\left(A+B\hat{j}\right)^{*}\right).
\]
\end{proof}

We can describe the image of $\chi$ in several useful ways. 
We will need several unary operations on complex matrices. For starters
we need the transpose $A^{\mathrm{T}}$ and the pointwise conjugate
$\overline{A}$ and the conjugate-transpose, or adjoint, 
$A^{*}=\overline{A}^{\mathrm{T}}=\overline{A^{\mathrm{T}}}$. Finally,
we need a twisted transpose that is useful in physics. This is a
``generalized involution'' $X^{\sharp}$ called the {\em dual operation}
that is defined only for $X$ in $\mathbf{M}_{2N}(\mathbb{C})$.

\begin{defn}
For $A,$ $B,$ $C$ and $D$ all complex $N$-by-$N$ matrices, define
\[
\left[\begin{array}{cc}
A & B\\
C & D\end{array}\right]^{\sharp}=\left[\begin{array}{cc}
D^{\mathrm{T}} & -B^{T}\\
-C^{\mathrm{T}} & A^{\mathrm{T}}\end{array}\right].
\]

Alternatively, we set $X^{\sharp}=-ZX^{\mathrm{T}}Z$ where
\begin{equation}
Z=\left[\begin{array}{cc}
0 & I\\
-I & 0\end{array}\right].
\label{eq:Define-Z}
\end{equation}
 
\end{defn}
Notice that $Z$ could be replaced by a matrix with similar properties
and define a variation on the dual operation. Indeed, the choice of
$Z$ is not standardized.

\begin{lem}
\label{lem:QuaternionicCondition}
For $X$ in $\mathbf{M}_{2N}(\mathbb{C})$
the following are equivalent.
\begin{enumerate}
\item $X$ is in the image of $\chi$;
\item $X^{*}=X^{\sharp}$;
\item $X=-\mathcal{T}\circ X\circ\mathcal{T}$ meaning
$X\xi=-\mathcal{T}\left(X\mathcal{T}\left(\xi\right)\right)$
for every vector $\xi$ in $\mathbb{C}^{2N}$ . 
\end{enumerate}
\end{lem}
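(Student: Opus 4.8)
The plan is to prove the cycle of implications $(1)\Rightarrow(2)\Rightarrow(3)\Rightarrow(1)$, relying mostly on direct block-matrix computations. For $(1)\Rightarrow(2)$, I would take $X=\chi(A+B\hat{j})$, so $X=\left[\begin{smallmatrix} A & B\\ -\overline{B} & \overline{A}\end{smallmatrix}\right]$, and compute both $X^{*}$ and $X^{\sharp}$ using the block formulas. We have $X^{*}=\left[\begin{smallmatrix} A^{*} & -B^{\mathrm T}\\ \overline{B}^{\mathrm T}\!\!{}^{*}\text{-ish} & \dots\end{smallmatrix}\right]$ — more carefully, the conjugate-transpose of the block matrix sends the $(i,j)$ block to the conjugate-transpose of the $(j,i)$ block, so $X^{*}=\left[\begin{smallmatrix} A^{*} & -B^{\mathrm T}\\ B^{*} & A^{\mathrm T}\end{smallmatrix}\right]$ after using $\overline{\,\overline{B}\,}^{\mathrm T}=B^{\mathrm T}$ and $\overline{\overline{A}}^{\mathrm T}=A^{\mathrm T}$. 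On the other side, the dual operation applied to $\left[\begin{smallmatrix} A & B\\ -\overline{B} & \overline{A}\end{smallmatrix}\right]$ gives $\left[\begin{smallmatrix} (\overline A)^{\mathrm T} & -B^{\mathrm T}\\ (\overline B)^{\mathrm T} & A^{\mathrm T}\end{smallmatrix}\right]$, and since $(\overline A)^{\mathrm T}=A^{*}$ and $(\overline B)^{\mathrm T}=B^{*}$, the two agree. This is the step where care with the several transpose/conjugate conventions is the main nuisance, but it is purely mechanical.

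For $(2)\Rightarrow(3)$, the clean route is to translate $\mathcal{T}$ into matrix language. Writing $C$ for the coordinatewise-conjugation operator on $\mathbb{C}^{2N}$, the definition \eqref{eq:time-reversal_anit-operator} says $\mathcal{T}\xi = ZC\xi$ with $Z$ as in \eqref{eq:Define-Z} (indeed $Z\left[\begin{smallmatrix}\overline{\mathbf v}\\ \overline{\mathbf w}\end{smallmatrix}\right]=\left[\begin{smallmatrix}\overline{\mathbf w}\\ -\overline{\mathbf v}\end{smallmatrix}\right]$, which is $-\mathcal{T}\xi$, so in fact $\mathcal{T}= -ZC = ZC$ up to the sign I should pin down; let me instead just verify $\mathcal{T}\xi = -ZC\xi$ directly from the formula). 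The condition $X=-\mathcal{T}X\mathcal{T}$ then reads $X = -(ZC)X(ZC)$ with the appropriate sign, and using $CXC=\overline{X}$ and $C^{2}=I$ this becomes $X = \pm Z\overline{X}Z$. Since $Z^{-1}=-Z$ and $\overline{X}^{\mathrm T}=X^{*}$, the identity $X=-Z\overline{X}Z$ is equivalent to $-ZX^{\mathrm T}Z = \overline{X}^{\mathrm T}=X^{*}$ after transposing, i.e. exactly $X^{\sharp}=X^{*}$. So $(2)\Leftrightarrow(3)$ drops out once $\mathcal{T}$ is rewritten as $ZC$ (with the correct sign) and one uses the three facts $C^{2}=I$, $CXC=\overline X$, $Z^{2}=-I$. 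The only subtlety is bookkeeping the sign in $\mathcal{T}=\mp ZC$; I would settle it once at the start.

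For $(3)\Rightarrow(1)$, I would argue that condition $(3)$ (equivalently $(2)$, now that they're known equivalent) forces the block structure. Partition $X=\left[\begin{smallmatrix} P & Q\\ R & S\end{smallmatrix}\right]$. Then $X^{*}=X^{\sharp}$ gives, block by block: $P^{*}=S^{\mathrm T}$, $Q^{*}=-Q^{\mathrm T}$... no — rather $Q^{*}=-Q^{\mathrm T}$ would be wrong; the off-diagonal equations are $R^{*}=-Q^{\mathrm T}$ and $Q^{*}=-R^{\mathrm T}$ (which are the same equation), and $S^{*}=P^{\mathrm T}$. From $S^{*}=P^{\mathrm T}$ we get $S=\overline{P}$, and from $R^{*}=-Q^{\mathrm T}$ we get $R=-\overline{Q}$. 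Hence $X=\left[\begin{smallmatrix} P & Q\\ -\overline Q & \overline P\end{smallmatrix}\right]=\chi(P+Q\hat{j})$, so $X$ is in the image of $\chi$. I do not expect any genuine obstacle here; the whole lemma is a coordinated pair of short computations, and the one place to be vigilant is consistency of the sign and transpose conventions linking $\mathcal{T}$, $Z$, and the $\sharp$ operation — getting that right once makes all three implications fall into place.
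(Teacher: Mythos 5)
Your proposal is correct and takes essentially the same approach as the paper: (1)$\iff$(2) by the identical block computation, and (2)$\iff$(3) by identifying the matrix of $-\mathcal{T}\circ X\circ\mathcal{T}$, which the paper does in block form and you do via $\mathcal{T}=-ZC$ (the sign you left unresolved is harmless, since $\mathcal{T}$ appears twice and $-\mathcal{T}X\mathcal{T}=-Z\overline{X}Z$ either way).
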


\begin{proof}
Assume 
\[
X=\left[\begin{array}{cc}
A & B\\
-\overline{B} & \overline{A}\end{array}\right].
\]
 Then
 \[
X^{*}=\left[\begin{array}{cc}
\overline{A}^{\mathrm{T}} & -B^{\mathrm{T}}\\
\overline{B}^{\mathrm{T}} & A^{\mathrm{T}}\end{array}\right]=X^{\sharp}.
\]
Conversely $X^{*}=X^{\sharp}$ translates into block form as
\[
\left[\begin{array}{cc}
A^{*} & C^{*}\\
B^{*} & D^{*}\end{array}\right]=\left[\begin{array}{cc}
D^{\mathrm{T}} & -B^{\mathrm{T}}\\
-C^{\mathrm{T}} & A^{\mathrm{T}}\end{array}\right]
\]
so we have proven (1)$\iff$(2).

We compute $-\mathcal{T}\circ X\circ\mathcal{T}$ for $X$ again in
block form, and find
\begin{align*}
-\mathcal{T}\circ X\circ\mathcal{T}\left[\begin{array}{c}
\mathbf{v}\\
\mathbf{w}\end{array}\right] & =-\mathcal{T}\left(\left[\begin{array}{cc}
A & B\\
C & D\end{array}\right]\left[\begin{array}{c}
-\overline{\mathbf{w}}\\
\overline{\mathbf{v}}\end{array}\right]\right)\\
 & =-\mathcal{T}\left(\left[\begin{array}{c}
-A\overline{\mathbf{w}}+B\overline{\mathbf{v}}\\
-C\overline{\mathbf{w}}+D\overline{\mathbf{v}}\end{array}\right]\right)\\
 & =\left[\begin{array}{c}
-\overline{C}\mathbf{w}+\overline{D}\mathbf{v}\\
\overline{A}\mathbf{w}-\overline{B}\mathbf{v}\end{array}\right]\\
 & =\left[\begin{array}{cc}
\overline{D} & -\overline{C}\\
-\overline{B} & \overline{A}\end{array}\right]\left[\begin{array}{c}
\mathbf{v}\\
\mathbf{w}\end{array}\right]
\end{align*}
so the matrix for the linear operator $-\mathcal{T}\circ X\circ\mathcal{T}$
is $\left( X^\sharp \right) ^*$. Therefore (2)$\iff$(3). 
\end{proof}

\begin{defn}
Let us call
\[
X^{*}=X^{\sharp}
\]
the \emph{quaternionic condition} and the matrix $X$ we will
call a \emph{quaternionic matrix}.
\end{defn}

We now dispose of the implications
\[
A^{*}A=0\ \implies\ A=0
\]
and
\[
AB=I\ \implies\ BA=I
\]
for matrices of quaternions. These are true implications for complex
matrices, and in particular for quaternionic matrices, and therefore
true for matrices of quaternions.

We pause to note some axioms of the dual operation. It behaves a lot
like the transpose. It is linear,

\[
\left(X+\alpha Y\right)^{\sharp}=X+\alpha Y^{\sharp}
\]
which is true even for complex $\alpha$. Here $X$ and $Y$ are any
$2N$-by-$2N$ complex matrices. The dual reverses multiplication,
\[
\left(XY\right)^{\sharp}=Y^{\sharp}X^{\sharp}.
\]
It undoes itself,
\[
\left(X^{\sharp}\right)^{\sharp}=X
\]
and commutes with the adjoint
\[
\left(X^{\sharp}\right)^{*}=\left(X^{*}\right)^{\sharp}.
\]

\begin{lem}
Every matrix $G$ in $\mathbf{M}_{2N}(\mathbb{C})$ can we expressed
in a unique way as 
\[
G=X+iY
\]
with $X$ and $Y$ being quaternionic matrices. 
\end{lem}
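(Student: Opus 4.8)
The plan is to recognize the quaternionic matrices as exactly the fixed points of a single conjugate-linear involution on $\mathbf{M}_{2N}(\mathbb{C})$, and then apply the familiar ``real and imaginary part'' splitting attached to such an involution. Define $\Phi:\mathbf{M}_{2N}(\mathbb{C})\to\mathbf{M}_{2N}(\mathbb{C})$ by
\[
\Phi(G)=\left(G^{\sharp}\right)^{*}=\left(G^{*}\right)^{\sharp},
\]
the two expressions agreeing by the commutation axiom for $\sharp$ and $*$ noted above. First I would check that $\Phi$ is conjugate-linear: since $\sharp$ is linear over all of $\mathbb{C}$ and $*$ is conjugate-linear, $\Phi(X+\alpha Y)=\Phi(X)+\overline{\alpha}\,\Phi(Y)$. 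Next I would check that $\Phi$ is an involution, $\Phi(\Phi(G))=G$; this is a two-line computation using $\left(X^{\sharp}\right)^{\sharp}=X$, $\left(X^{*}\right)^{*}=X$, and the fact that $\sharp$ and $*$ commute.

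The key observation is that $X$ satisfies the quaternionic condition $X^{*}=X^{\sharp}$ if and only if $\Phi(X)=X$. Indeed, applying $\sharp$ to $X^{*}=X^{\sharp}$ and using $\left(X^{\sharp}\right)^{\sharp}=X$ gives $\left(X^{*}\right)^{\sharp}=X$, i.e.\ $\Phi(X)=X$; the converse runs the same computation backwards. By Lemma~\ref{lem:QuaternionicCondition} these are also precisely the matrices in the image of $\chi$, so this step merely repackages that lemma in the form we need.

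With this in hand the decomposition is the standard one. Given $G$, set
\[
X=\frac{1}{2}\left(G+\Phi(G)\right),\qquad Y=\frac{1}{2i}\left(G-\Phi(G)\right).
\]
Then $X+iY=G$ by inspection, and I would verify $\Phi(X)=X$ and $\Phi(Y)=Y$ directly from $\Phi(\Phi(G))=G$ together with conjugate-linearity of $\Phi$. For uniqueness, if $G=X+iY$ with $X$ and $Y$ quaternionic, then $\Phi(iY)=-iY$ and hence $\Phi(G)=X-iY$; solving the two equations forces $X$ and $Y$ to equal the expressions above.

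There is no genuine obstacle here: once the quaternionic condition is seen as invariance under the conjugate-linear involution $\Phi$, the argument is formally identical to writing a complex number as its real part plus $i$ times its imaginary part. The only place that calls for care is the verification that $Y$ is quaternionic, where conjugate-linearity turns the scalar $\frac{1}{2i}$ into $-\frac{1}{2i}$ while simultaneously interchanging $G$ with $\Phi(G)$, so that the two sign changes cancel.
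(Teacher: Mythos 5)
Your decomposition is exactly the paper's: the formulas $X=\tfrac{1}{2}\left(G+G^{\sharp*}\right)$ and $Y=\tfrac{1}{2i}\left(G-G^{\sharp*}\right)$ coincide with the ones the author writes down, and your verification (via the conjugate-linear involution $\Phi$ and its fixed-point characterization of the quaternionic condition) is the routine check the paper leaves implicit. The argument, including the uniqueness step the paper omits, is correct.
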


\begin{proof}
Given $G$ we set
\[
X=\frac{1}{2}G^{\sharp*}+\frac{1}{2}G
\]
and 
\[
Y=\frac{i}{2}G^{\sharp*}-\frac{i}{2}G.
\]
\end{proof}

\section{Kramers degeneracy and Schur factorization}

Eigenvalue doubling is a key feature of self-dual, self-adjoint matrices.
In physics this is called the Kramers degeneracy theorem, or the theory
of Kramers pairs \cite{kramers1930theorie,wigner1932uber,mehta2004random}.
This generalizes in two ways, to give eigenvalue doubling given the symmetry
$X=X^{\sharp}$ and conjugate-pairing of eigenvalues given the
symmetry $X^{*}=X^{\sharp}$.

Such a collection of paired eigenvectors will, in good situations, form a unitary
matrix. If $U$ is unitary matrix that satisfies the quaternionic
condition then it satisfies another symmetry making it symplectic,
specifically $U^{\mathrm{T}}ZU=Z$.

\begin{lem}
\label{lem:SymplecticUnitaryStructure} 
Suppose U is a unitary $2N$-by-$2N$ matrix. The following are equivalent:
\begin{enumerate}
\item $U$ is symplectic;
\item $U^{*}=U^{\sharp}$;
\item $Z\overline{U}=UZ$;
\item $U\circ\mathcal{T}=\mathcal{T}\circ U$; 
\item If $\mathbf{v}$ is column $j$ of $U$ for $j\leq N$ then column
$N+j$ of $U$ is $\mathcal{T}\left(v\right)$. 
\end{enumerate}
\end{lem}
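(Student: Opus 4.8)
The plan is to prove the equivalences by establishing a short cycle among the five conditions, exploiting the fact that $U$ is unitary so that $U^{*} = U^{-1}$, plus the already-known Lemma~\ref{lem:QuaternionicCondition} which handles the quaternionic condition $U^{*} = U^{\sharp}$ in terms of $\mathcal{T}$. First I would note that ``symplectic'' means $U^{\mathrm{T}} Z U = Z$, and rewrite this: since $U$ is unitary, $U^{\mathrm{T}} = \overline{U^{*}} = \overline{U^{-1}} = \overline{U}^{-1}$, so $U^{\mathrm{T}} Z U = Z$ is equivalent to $Z U = \overline{U} Z$, i.e. to (3) after taking inverses or rearranging; I should be careful with which side, but using $Z^{-1} = -Z$ and $Z^{\mathrm{T}} = -Z$ this is a routine manipulation. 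So (1)$\iff$(3) is essentially algebra with $Z$.

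Next, for (1)$\iff$(2): the definition $X^{\sharp} = -Z X^{\mathrm{T}} Z$ gives $U^{\sharp} = -Z U^{\mathrm{T}} Z$, and $U^{*} = U^{\sharp}$ becomes $U^{*} = -Z U^{\mathrm{T}} Z$, equivalently $Z U^{*} = -U^{\mathrm{T}} Z$ (using $Z^{2} = -I$), equivalently $Z \overline{U^{\mathrm{T}}} = -U^{\mathrm{T}} Z$. Taking transposes of both sides and using $Z^{\mathrm{T}} = -Z$ converts this into a statement matching (3) or (1); again just bookkeeping. Alternatively, and more cleanly, I can invoke Lemma~\ref{lem:QuaternionicCondition} directly: (2) is exactly ``$U$ is quaternionic,'' which by that lemma is equivalent to $U = -\mathcal{T} \circ U \circ \mathcal{T}$. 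Then I would combine this with unitarity: for a unitary $U$, the relation $\mathcal{T} U = \pm U \mathcal{T}$ can be sorted out using $\mathcal{T}^{2} = -I$ (a one-line check from the definition~\eqref{eq:time-reversal_anit-operator}) together with~\eqref{eq:TRoperatorPreservesOrtho}. Concretely, $U = -\mathcal{T} U \mathcal{T}$ multiplied on the right by $\mathcal{T}$ gives $U\mathcal{T} = -\mathcal{T} U \mathcal{T}^{2} = \mathcal{T} U$, which is exactly (4); conversely (4) plus $\mathcal{T}^{2} = -I$ runs backwards to (2). So (2)$\iff$(4), and this is the cleanest pair of the five.

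For (3)$\iff$(4): unwind (4) at the level of the block decomposition $U = A + B\hat j$ (allowed since by (2)$\iff$(1) we are in the quaternionic world, or better, do it directly), using the explicit formula~\eqref{eq:time-reversal_anit-operator} for $\mathcal{T}$ and the formula $\mathcal{T}(\xi) = $ apply $Z$ then conjugate; indeed $\mathcal{T}\xi = Z\overline{\xi}$, so $\mathcal{T} U \xi = Z\overline{U}\,\overline{\xi}$ while $U\mathcal{T}\xi = UZ\overline{\xi}$, and equality for all $\xi$ is precisely $Z\overline{U} = UZ$, which is (3) verbatim. This observation ``$\mathcal{T} = (\text{conjugation}) \circ Z$'' makes the whole lemma nearly mechanical. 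Finally (4)$\iff$(5): write $U$ in terms of its columns $\mathbf{v}_{1}, \dots, \mathbf{v}_{2N}$; the standard basis vector $e_{j}$ for $j \leq N$ maps under $\mathcal{T}$ to $e_{N+j}$, while $e_{N+j}$ maps to $-e_{j}$; so $U\mathcal{T}(e_{j}) = U e_{N+j} = \mathbf{v}_{N+j}$ and $\mathcal{T} U(e_{j}) = \mathcal{T}(\mathbf{v}_{j})$, giving column $N+j$ equals $\mathcal{T}(\mathbf{v}_{j})$; testing on $e_{N+j}$ gives $-\mathbf{v}_{j} = \mathcal{T}(\mathbf{v}_{N+j})$, which is automatic from the first relation since $\mathcal{T}^{2} = -I$. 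Hence checking (4) on the standard basis is equivalent to (5).

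The main obstacle, such as it is, is purely notational: keeping straight the several sign conventions — $Z^{2} = -I$, $Z^{\mathrm{T}} = -Z = Z^{-1}$, $\mathcal{T}^{2} = -I$, the minus sign in the definition of $X^{\sharp}$, and the minus sign appearing in condition (3) of Lemma~\ref{lem:QuaternionicCondition} versus its absence here — so that transposes and inverses land on the correct side. I would therefore organize the proof around the single identity $\mathcal{T}\xi = Z\overline{\xi}$, prove (2)$\iff$(4) via Lemma~\ref{lem:QuaternionicCondition} and $\mathcal{T}^{2} = -I$, prove (3)$\iff$(4) by the columnwise/$Z$-conjugation computation, prove (1)$\iff$(3) by the unitarity substitution $U^{\mathrm{T}} = \overline{U}^{-1}$, and prove (4)$\iff$(5) by evaluating on the standard basis, thereby closing the cycle.
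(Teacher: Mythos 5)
Your proposal is correct and matches the paper's intent: the paper's entire proof is ``This follows easily from Lemma~\ref{lem:QuaternionicCondition},'' and your chain --- (2)$\iff$(4) via that lemma together with $\mathcal{T}^{2}=-I$, (3)$\iff$(4) by writing $\mathcal{T}$ as conjugation composed with $Z$, (1)$\iff$(3) by the unitarity substitution, and (4)$\iff$(5) by evaluating on the standard basis --- is exactly the routine verification the paper leaves to the reader. One caution: with the paper's convention for $Z$ the organizing identity is $\mathcal{T}\xi=-Z\overline{\xi}$ (equivalently $Z^{\mathrm{T}}\overline{\xi}$), not $Z\overline{\xi}$; the sign cancels everywhere you invoke it, so no conclusion is affected, but since you correctly identify sign bookkeeping as the only hazard, this is the one sign to fix.
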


\begin{proof}
This follows easily from Lemma~\ref{lem:QuaternionicCondition}.
\end{proof}

We need to know that the group of symplectic unitary acts transitively
on $\mathbb{C}^{n}$.

\begin{lem}
\label{lem:transitivity} 
If $\mathbf{v}$ is any unit vector in $\mathbb{C}^{2N}$
then there is a symplectic unitary with $U\mathbf{e}_{1}=\mathbf{v}$. 
\end{lem}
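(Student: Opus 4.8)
The plan is to build the symplectic unitary one Kramers pair at a time, mimicking the Gram--Schmidt process but working with pairs $\{\xi,\mathcal{T}\xi\}$ instead of single vectors. The key structural fact we rely on is equation~\eqref{eq:TR-orthogonality}, namely $\xi\perp\mathcal{T}\xi$, together with equation~\eqref{eq:TRoperatorPreservesOrtho}, which says $\mathcal{T}$ preserves orthogonality (in conjugated form). Given the unit vector $\mathbf{v}\in\mathbb{C}^{2N}$, the first column will be $\mathbf{v}$ and the $(N+1)$st column will be $\mathcal{T}\mathbf{v}$; by \eqref{eq:TR-orthogonality} and \eqref{eq:TRoperatorPreservesOrtho} these are two orthonormal vectors, so this pair is the start of an orthonormal basis.

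The main step is the inductive construction. Suppose we have found orthonormal vectors $\mathbf{u}_1,\dots,\mathbf{u}_k$ (with $k<N$) together with $\mathcal{T}\mathbf{u}_1,\dots,\mathcal{T}\mathbf{u}_k$, and that the whole collection of $2k$ vectors is orthonormal. I claim the orthogonal complement $W$ of $\operatorname{span}\{\mathbf{u}_1,\dots,\mathbf{u}_k,\mathcal{T}\mathbf{u}_1,\dots,\mathcal{T}\mathbf{u}_k\}$ is $\mathcal{T}$-invariant: if $\eta\perp\mathbf{u}_j$ then $\langle\mathcal{T}\eta,\mathcal{T}\mathbf{u}_j\rangle=\overline{\langle\eta,\mathbf{u}_j\rangle}=0$ by \eqref{eq:TRoperatorPreservesOrtho}, and similarly $\langle\mathcal{T}\eta,\mathbf{u}_j\rangle=\langle\mathcal{T}\eta,\mathcal{T}(\mathcal{T}^{-1}\mathbf{u}_j)\rangle$ — here one needs $\mathcal{T}^2=-I$, which is immediate from the definition \eqref{eq:time-reversal_anit-operator}, so $\mathcal{T}\mathbf{u}_j=-\mathcal{T}^{-1}\mathbf{u}_j$ up to sign and the inner product with $\mathcal{T}\eta$ is again a conjugate of something that vanishes. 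Since $W\ne\{0\}$ (its dimension is $2N-2k>0$), pick any unit vector $\mathbf{u}_{k+1}\in W$; then $\mathcal{T}\mathbf{u}_{k+1}\in W$ as well, it is a unit vector by \eqref{eq:TRoperatorPreservesOrtho}, and it is orthogonal to $\mathbf{u}_{k+1}$ by \eqref{eq:TR-orthogonality}. This extends the orthonormal system by one more Kramers pair, and the induction terminates after $N$ steps with an orthonormal basis $\mathbf{u}_1,\dots,\mathbf{u}_N,\mathcal{T}\mathbf{u}_1,\dots,\mathcal{T}\mathbf{u}_N$ of $\mathbb{C}^{2N}$.

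Finally, let $U$ be the matrix whose $j$th column is $\mathbf{u}_j$ for $j\le N$ and whose $(N+j)$th column is $\mathcal{T}\mathbf{u}_j$. It is unitary because its columns form an orthonormal basis, and by condition~(5) of Lemma~\ref{lem:SymplecticUnitaryStructure} it is symplectic. By construction $U\mathbf{e}_1=\mathbf{u}_1=\mathbf{v}$, which is what we wanted.

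The only place that requires genuine care is the $\mathcal{T}$-invariance of the orthogonal complement $W$; everything else is bookkeeping. The subtlety there is that $\mathcal{T}$ is only conjugate linear and satisfies $\mathcal{T}^2=-I$ rather than $\mathcal{T}^2=I$, so one must handle the sign and the complex conjugation carefully when checking that $\mathcal{T}\eta$ is orthogonal to both $\mathbf{u}_j$ and $\mathcal{T}\mathbf{u}_j$. Once that lemma-within-the-proof is in hand, the induction and the appeal to Lemma~\ref{lem:SymplecticUnitaryStructure}(5) finish the argument.
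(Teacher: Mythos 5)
Your proof is correct and follows essentially the same route as the paper's: build an orthonormal basis of Kramers pairs starting from $\mathbf{v},\mathcal{T}\mathbf{v}$ using \eqref{eq:TRoperatorPreservesOrtho} and \eqref{eq:TR-orthogonality}, then order the pairs into the columns of a symplectic unitary via Lemma~\ref{lem:SymplecticUnitaryStructure}. You have simply made explicit the step the paper leaves implicit, namely the $\mathcal{T}$-invariance of the orthogonal complement of a $\mathcal{T}$-invariant span (which the paper records as a separate lemma only later, in the Jordan form section).
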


\begin{proof}
Let $\mathbf{v}_{1}=\mathbf{v}$. We use (\ref{eq:TRoperatorPreservesOrtho})
and (\ref{eq:TR-orthogonality}) to select in order vectors that are
an orthonormal basis for $\mathbb{C}^{2N}$, but of the form
\[
\mathbf{v}_{1},\mathcal{T}\mathbf{v}_{1},\mathbf{v}_{2},\mathcal{T}\mathbf{v}_{2},\dots,\mathbf{v}_{N},\mathcal{T}\mathbf{v}_{N}.
\]
If we reorder to 
\[
\mathbf{v}_{1},\mathbf{v}_{2},\dots,\mathbf{v}_{N},\mathcal{T}\mathbf{v}_{1},\mathcal{T}\mathbf{v}_{2},\dots,\mathcal{T}\mathbf{v}_{N}
\]
we have the columns of the desired symplectic unitary. 
\end{proof}

The most basic result in this realm is an ugly lemma that says that
$\mathcal{T}$ maps left eigenvectors of $X$ to right eigenvectors
of $X^{\sharp}$, with the same eigenvalue. 
The second part of this lemma is more elegant, if less general.  It
specifices how every quaternionic matrix has a conjugate symmetry it
its spectral decomposition.

\begin{lem}
\label{lem:left-right-Lemma}
Suppose $X$ is in $\mathbf{M}_{2N}(\mathbb{C})$.
\begin{enumerate}
\item If $X\xi=\lambda\xi$ then 
\[
\left(\mathcal{T}\xi\right)^{*}X^{\sharp}
=
\lambda\left(\mathcal{T}\xi\right)^{*}.
\]

\item If $X^{*}=X^{\sharp}$ and $X\xi=\lambda\xi$ then 
\[
X\left(\mathcal{T}\xi\right)
=
\overline{\lambda}\left(\mathcal{T}\xi\right).
\]
\end{enumerate}
\end{lem}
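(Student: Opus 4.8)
The plan is to prove both parts by direct manipulation using the algebraic identities already established for the dual operation $\sharp$, the adjoint, and the operator $\mathcal{T}$, rather than by any further structural analysis. Part (1) is the ``ugly'' computation and I would attack it first. Starting from $X\xi = \lambda\xi$, I would apply $\mathcal{T}$ to both sides. The key is that $\mathcal{T}$ is conjugate-linear, so $\mathcal{T}(X\xi) = \mathcal{T}(\lambda\xi)$ becomes $\mathcal{T}(X\xi) = \overline{\lambda}\,\mathcal{T}(\xi)$ on the right-hand side. For the left-hand side I want to re-express $\mathcal{T} X$ in terms of $X^{\sharp}$ and $\mathcal{T}$. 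The proof of Lemma~\ref{lem:QuaternionicCondition} already computed that the matrix of $-\mathcal{T}\circ X \circ \mathcal{T}$ is $(X^{\sharp})^{*}$; equivalently, $\mathcal{T}\circ X = -\,(X^{\sharp})^{*}\circ \mathcal{T}$ as operators (using $\mathcal{T}^{2} = -I$, which is the quick calculation behind (\ref{eq:TR-orthogonality})). Substituting, I get $-(X^{\sharp})^{*}\,\mathcal{T}\xi = \overline{\lambda}\,\mathcal{T}\xi$, i.e. $(X^{\sharp})^{*}\,\mathcal{T}\xi = -\overline{\lambda}\,\mathcal{T}\xi$. Taking adjoints of this vector equation (and being careful that the scalar conjugates) turns it into a left-eigenvector statement: $(\mathcal{T}\xi)^{*}\,X^{\sharp} = -\lambda\,(\mathcal{T}\xi)^{*}$. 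The sign needs to be tracked through, and I expect the bookkeeping around $\mathcal{T}^2=-I$, the conjugate-linearity, and the adjoint to be the only real hazard; I would double-check it against a concrete $1\times 1$ block example.

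Actually, a cleaner route to part (1) avoids operators entirely. From $X\xi = \lambda\xi$, take the adjoint to get $\xi^{*}X^{*} = \overline{\lambda}\,\xi^{*}$. Now I want $X^{\sharp}$, not $X^{*}$; the relation $X^{\sharp} = -Z X^{\mathrm{T}} Z$ together with $X^{*} = \overline{X^{\mathrm{T}}}$ lets me convert. Writing everything in terms of $\overline{\xi}$ and $Z$ and using that $\mathcal{T}\xi = -\overline{Z\xi}$ (which one reads off directly from the definition (\ref{eq:time-reversal_anit-operator}) of $\mathcal{T}$ and the definition (\ref{eq:Define-Z}) of $Z$), the conjugate of $\xi^{*}X^{*} = \overline{\lambda}\,\xi^{*}$ becomes $\xi^{\mathrm{T}}\overline{X^{*}} = \lambda\,\xi^{\mathrm{T}}$, i.e. $\xi^{\mathrm{T}}X^{\mathrm{T}} = \lambda\,\xi^{\mathrm{T}}$, which is just the transpose of the original equation. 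Multiplying on the right by $Z$ and inserting $Z^{2} = -I$ in the right spot produces $(Z\xi)^{\mathrm{T}}(-Z X^{\mathrm{T}} Z) = \lambda\,(Z\xi)^{\mathrm{T}}Z \cdot(\text{correction})$; massaging this into $\overline{(\mathcal{T}\xi)}^{\,\mathrm{T}}X^{\sharp}$ and then conjugating once more lands on the claimed identity. Whichever of these two presentations is shorter after the dust settles is the one I would write up; I lean toward the operator version since the needed operator identity is already in hand from Lemma~\ref{lem:QuaternionicCondition}.

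Part (2) is then nearly immediate and I would present it as a corollary of the mechanics of part (1) rather than starting over. Under the quaternionic condition $X^{*} = X^{\sharp}$, the operator identity $\mathcal{T}\circ X = -(X^{\sharp})^{*}\circ\mathcal{T}$ collapses to $\mathcal{T}\circ X = -(X^{*})^{*}\circ\mathcal{T} = -X\circ\mathcal{T}$; that is, $X\circ\mathcal{T} = -\mathcal{T}\circ X$, the anticommutation that is exactly condition (3) of Lemma~\ref{lem:QuaternionicCondition} rearranged. Applying this to the eigenvector equation: $X(\mathcal{T}\xi) = -\mathcal{T}(X\xi) = -\mathcal{T}(\lambda\xi) = -\overline{\lambda}\,\mathcal{T}(\xi)$. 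This has the wrong sign compared to the statement, which warns me that I have a sign error somewhere in the operator identity — most likely $X\circ\mathcal{T} = \mathcal{T}\circ X$ rather than anticommuting, since Lemma~\ref{lem:SymplecticUnitaryStructure}(4) records $U\circ\mathcal{T} = \mathcal{T}\circ U$ for symplectic $U$ and the quaternionic condition is the same symmetry. So the correct reading of Lemma~\ref{lem:QuaternionicCondition}(3) must give $X\circ\mathcal{T} = \mathcal{T}\circ X$ after accounting for the conjugate-linearity in how ``$-\mathcal{T}\circ X\circ\mathcal{T} = X$'' unpacks, and then $X(\mathcal{T}\xi) = \mathcal{T}(X\xi) = \mathcal{T}(\lambda\xi) = \overline{\lambda}\,\mathcal{T}\xi$ as claimed.

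The only genuine obstacle, then, is sign discipline: there are three independent sources of minus signs and conjugations in play — the $-Z(\cdot)^{\mathrm{T}}Z$ in the definition of $\sharp$, the $\mathcal{T}^{2} = -I$ relation, and the conjugate-linearity of $\mathcal{T}$ interacting with the eigenvalue $\lambda$ — and they have to be composed correctly. My strategy to stay honest is to fix the operator identity $X\circ\mathcal{T} = \mathcal{T}\circ X$ for quaternionic $X$ once and for all by re-deriving it carefully from Lemma~\ref{lem:QuaternionicCondition}(3) (being explicit that ``$X = -\mathcal{T}\circ X\circ\mathcal{T}$'' means $X\xi = -\mathcal{T}(X(\mathcal{T}\xi))$, and composing one more $\mathcal{T}$ on the right), and then derive both parts from that single clean statement, checking the final signs against the $N=1$ quaternion scalar case where everything can be verified by hand.
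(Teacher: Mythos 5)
Your route works and is genuinely different from the paper's. The paper proves (1) by brute force: it writes $X$ in $2\times 2$ block form, expands both $X\xi=\lambda\xi$ and $(\mathcal{T}\xi)^{*}X^{\sharp}=\lambda(\mathcal{T}\xi)^{*}$ into pairs of block equations, and observes they are the same system; then (2) follows from (1) by taking adjoints under $X^{*}=X^{\sharp}$. You instead recycle the computation already done in the proof of Lemma~\ref{lem:QuaternionicCondition}, which is more economical and makes the Kramers pairing look inevitable rather than accidental. But your stated operator identity has the sign wrong, and you only half-correct it: composing $-\mathcal{T}\circ X\circ\mathcal{T}=(X^{\sharp})^{*}$ with $\mathcal{T}$ on the right and using $\mathcal{T}^{2}=-I$ gives $\mathcal{T}\circ X=+(X^{\sharp})^{*}\circ\mathcal{T}$, not $-(X^{\sharp})^{*}\circ\mathcal{T}$. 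With the correct sign, part (1) is immediate for arbitrary $X$: apply $\mathcal{T}$ to $X\xi=\lambda\xi$ to get $(X^{\sharp})^{*}\mathcal{T}\xi=\overline{\lambda}\,\mathcal{T}\xi$, then take adjoints; and part (2) is the specialization $X\circ\mathcal{T}=\mathcal{T}\circ X$ when $X^{*}=X^{\sharp}$, exactly as you diagnose in your third paragraph (the quaternionic condition is commutation with $\mathcal{T}$, consistent with Lemma~\ref{lem:SymplecticUnitaryStructure}(4)). Your second, matrix-level route is also salvageable and in fact shortest of all: since $\mathcal{T}\xi=-Z\overline{\xi}$ and $Z^{\mathrm{T}}=-Z$, one has $(\mathcal{T}\xi)^{*}=\xi^{\mathrm{T}}Z$, whence
\[
(\mathcal{T}\xi)^{*}X^{\sharp}
=\xi^{\mathrm{T}}Z\left(-ZX^{\mathrm{T}}Z\right)
=\xi^{\mathrm{T}}X^{\mathrm{T}}Z
=(X\xi)^{\mathrm{T}}Z
=\lambda\,\xi^{\mathrm{T}}Z
=\lambda(\mathcal{T}\xi)^{*},
\]
with no sign bookkeeping left to chance. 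Either cleaned-up version is a complete and valid proof; just commit to one identity with the correct sign before writing it up.
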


\begin{proof}
(1) Starting with 
\[
X=\left[\begin{array}{cc}
A & B\\
C & D\end{array}\right]
\]
and
\[
\xi=\left[\begin{array}{c}
\mathbf{v}\\
\mathbf{w}\end{array}\right]
\]
we find that $X\xi=\lambda\xi$ translates to 
\begin{align*}
A\mathbf{v}+B\mathbf{w} & =\lambda\mathbf{v}\\
C\mathbf{v}+D\mathbf{w} & =\lambda\mathbf{w}
\end{align*}
and $\left(\mathcal{T}\mathbf{v}\right)^{*}X^{\sharp}=\lambda\left(\mathcal{T}\mathbf{v}\right)^{*}$
translates to 
\begin{align*}
-\mathbf{w}^{\mathrm{T}}D^{\mathrm{T}}-\mathbf{v}^{\mathrm{T}}C^{\mathrm{T}} & =-\lambda\mathbf{w}^{\mathrm{T}}\\
\mathbf{w}^{\mathrm{T}}B^{\mathrm{T}}+\mathbf{v}^{\mathrm{T}}A^{\mathrm{T}} & =\lambda\mathbf{v}^{\mathrm{T}}
\end{align*}
so these are equivalent conditions.

(2) follow from (1) by taking adjoints.
\end{proof}

Now we are able to extend Kramers degeneracy to a variety of situations,
starting with a block diagonalization for commuting quaternionic matrices.
Part (2) of Theorem~\ref{thm:QuaternionicSchur} appeared
in \cite{wiegmannMatricesQuaternion}.

\begin{thm}
\label{thm:QuaternionicSchur}
Suppose $X_{1},\ldots,X_{k}$ in $\mathbf{M}_{2N}(\mathbb{C})$
commute pairwise and $X_{j}^{*}=X_{j}^{\sharp}$ for all $j$. 
\begin{enumerate}
\item There is a single symplectic unitary $U$ so that, for all $j,$
\[
U^{*}X_{j}U=\left[\begin{array}{cc}
T_{j} & S_{j}\\
-\overline{S_{j}} & \overline{T_{j}}\end{array}\right].
\]
with $T_{j}$ upper-triangular and $S_{j}$ strictly upper-triangular. 
\item If, in addition, the $X_{j}$ are normal then there is a single symplectic
unitary $U$ so that, for all $j,$
\[
U^{*}X_{j}U=\left[\begin{array}{cc}
D_{j} & 0\\
0 & \overline{D_{j}}\end{array}\right].
\]
 with $D_{j}$ diagonal.
\item Every symplectic unitary has determinant one.
\end{enumerate}
\end{thm}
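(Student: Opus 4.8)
The plan is to prove the three parts in order, deriving part~(3) as a short corollary of part~(2).

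For part~(1) I would induct on $N$, the case $N=0$ being vacuous. In the inductive step the commuting family $X_{1},\dots,X_{k}$ over $\mathbb{C}$ has a common eigenvector, which I normalize to a unit vector $\mathbf{v}$ with $X_{j}\mathbf{v}=\lambda_{j}\mathbf{v}$. Since $X_{j}^{*}=X_{j}^{\sharp}$, the second part of Lemma~\ref{lem:left-right-Lemma} gives $X_{j}(\mathcal{T}\mathbf{v})=\overline{\lambda_{j}}\,\mathcal{T}\mathbf{v}$; together with $\mathbf{v}\perp\mathcal{T}\mathbf{v}$ from (\ref{eq:TR-orthogonality}) and the one-line identity $\mathcal{T}^{2}=-I$, this shows that $W=\operatorname{span}(\mathbf{v},\mathcal{T}\mathbf{v})$ is two-dimensional, is $\mathcal{T}$-invariant, and is invariant under every $X_{j}$. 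Because $\mathcal{T}$ preserves orthogonality, (\ref{eq:TRoperatorPreservesOrtho}), the complement $W^{\perp}$ is again $\mathcal{T}$-invariant and carries an orthonormal basis $\mathbf{v}_{2},\dots,\mathbf{v}_{N},\mathcal{T}\mathbf{v}_{2},\dots,\mathcal{T}\mathbf{v}_{N}$, exactly as in the proof of Lemma~\ref{lem:transitivity}. Compressing the $X_{j}$ to $W^{\perp}$ yields operators that still commute (since $W$ is $X_{j}$-invariant) and still satisfy the quaternionic condition relative to $\mathcal{T}|_{W^{\perp}}$ (since $\mathcal{T}$ commutes with the orthogonal projection onto $W^{\perp}$, so one may invoke characterization~(3) of Lemma~\ref{lem:QuaternionicCondition}); normality is never used here, which is why only triangular, not diagonal, blocks appear. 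I then apply the inductive hypothesis on $W^{\perp}$ and reassemble, ordering the columns as $\mathbf{v}$, then the triangularizing $W^{\perp}$-basis vectors of ``$\mathbf{v}$-type'', then $\mathcal{T}\mathbf{v}$, then those of ``$\mathcal{T}\mathbf{v}$-type''; by characterization~(5) of Lemma~\ref{lem:SymplecticUnitaryStructure} the resulting $U$ is symplectic, and the block form of part~(1) follows by reading off where $X_{j}\mathbf{v}$, $X_{j}\mathcal{T}\mathbf{v}$, and the corrected images of the remaining basis vectors land.

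For part~(2) I would feed part~(1) back in. The matrices $Y_{j}:=U^{*}X_{j}U$ of part~(1) are unitarily equivalent to the $X_{j}$, hence normal, and from the triangularity of $T_{j}$ and strict triangularity of $S_{j}$ one checks that each subspace $F_{p}=\operatorname{span}(\mathbf{e}_{1},\dots,\mathbf{e}_{p},\mathbf{e}_{N+1},\dots,\mathbf{e}_{N+p})$, $0\le p\le N$, is invariant under every $Y_{j}$. An invariant subspace of a normal operator is automatically reducing (equate the diagonal blocks of $Y^{*}Y$ and $YY^{*}$ and take a trace to force the off-diagonal block to vanish), so each $F_{p}$ reduces every $Y_{j}$, and hence so does each rank-two piece $F_{p}\ominus F_{p-1}=\operatorname{span}(\mathbf{e}_{p},\mathbf{e}_{N+p})$. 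Imposing $Y_{j}\mathbf{e}_{p}\perp F_{p-1}$ then kills every off-diagonal entry of the $p$-th column of $T_{j}$ and the whole $p$-th column of $S_{j}$; so $T_{j}$ is diagonal and $S_{j}=0$, which is the asserted block form. (Alternatively one can avoid part~(1): simultaneously unitarily diagonalize the commuting normal family, note via the second part of Lemma~\ref{lem:left-right-Lemma} that $\mathcal{T}$ carries the joint eigenspace for $\mu$ to that for $\overline{\mu}$, and for a real joint eigenvalue use $\mathcal{T}^{2}=-I$ with (\ref{eq:TRoperatorPreservesOrtho}) to split its eigenspace into $\mathcal{T}$-paired halves --- that last step being precisely Kramers degeneracy.)

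Part~(3) is then immediate, and is the payoff. A symplectic unitary $U$ satisfies $U^{*}=U^{\sharp}$ by Lemma~\ref{lem:SymplecticUnitaryStructure} and is of course normal, so part~(2) applied to the single matrix $U$ produces an invertible (in fact symplectic unitary) $V$ with $V^{*}UV$ block-diagonal of the form $D\oplus\overline{D}$, $D$ diagonal. Then $\det U=\det(V^{*}UV)=\det(D)\,\overline{\det(D)}=\lvert\det(D)\rvert^{2}\ge 0$, whereas $\lvert\det U\rvert=1$ since $U$ is unitary; hence $\det U=1$. The main obstacle is entirely in part~(1): the induction is routine, but the bookkeeping of the column order --- keeping the ``$\mathbf{v}$-type'' and ``$\mathcal{T}\mathbf{v}$-type'' columns segregated so that the reassembled matrix is symplectic in the sense of Lemma~\ref{lem:SymplecticUnitaryStructure}, while the blocks simultaneously land in upper- and strictly-upper-triangular position --- must be done with care. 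In part~(2) the one non-formal input is that invariant subspaces of normal operators are reducing (or, on the alternative route, the Kramers splitting of a real joint eigenspace), and once part~(2) is in hand part~(3) costs only the displayed determinant computation.
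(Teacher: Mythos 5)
Your proposal is correct and follows essentially the same route as the paper: a common eigenvector $\mathbf{v}$, its Kramers partner $\mathcal{T}\mathbf{v}$ as an eigenvector for $\overline{\lambda_j}$, a symplectic change of basis via Lemmas~\ref{lem:transitivity} and~\ref{lem:SymplecticUnitaryStructure}, induction on the compression to $W^{\perp}$, and the same determinant computation for part~(3). The only (minor, and valid) deviation is in part~(2), where you deduce diagonality from the triangular form of part~(1) using the fact that invariant subspaces of normal matrices are reducing, whereas the paper simply re-runs the induction observing that normality zeroes out the relevant row at each step.
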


\begin{proof}
(1) A finite set of commuting matrices will have a common eigenvector,
so let $\mathbf{v}$ be a unit vector so that $X_{j}\mathbf{v}=\lambda_{j}\mathbf{v}.$
We know then that so $\mathcal{T}\mathbf{v}$ is an eigenvalue for
$X_{j}$ with eigenvector $\overline{\lambda_{j}}.$ There is a symplectic
unitary $U_{1}$ so that $U_{1}\mathbf{e}_{1}=\mathbf{v},$ and then
$U_{1}\mathbf{e}_{N+1}=\mathcal{T}\mathbf{v}$ by
Lemma~\ref{lem:SymplecticUnitaryStructure}.
Let $Y_{j}=U_{1}^{*}X_{j}U_{1}.$ Then 
\[
Y_{j}\mathbf{e}_{1}=\lambda_{j}\mathbf{e}_{1}
\]
and
\[
Y_{j}\mathbf{e}_{N+1}=\overline{\lambda_{j}}\mathbf{e}_{N+1}.
\]
 This means the column $1$ and column $N+1$ are all but zeroed-out,
\[
Y_{j}=\left[\begin{array}{cccc}
\lambda_{j} & * & 0 & *\\
0 & A_{j} & 0 & C_{j}\\
0 & * & \overline{\lambda_{j}} & *\\
0 & B_{j} & 0 & D_{j}\end{array}\right].
\]
Up to a non-symplectic change of basis we are looking at block-upper
triangular matrices that commute, so the lower-right corners in that
basis commute. This means that the 
\begin{equation}\label{eqn:smallerBlocks}
Z_{j}=\left[\begin{array}{cc}
A_{j} & C_{j}\\
B_{j} & D_{j}\end{array}\right]
\end{equation}
all commute, and each must satisfy $Z_{j}^{*}=Z_{j}^{\sharp}.$ By
induction, we have proven the first claim.

For (2) we modify the proof just a little. Starting with the $X_{j}$
normal, we find the $Y_{j}$ are also normal and so 
\[
Y_{j}=\left[\begin{array}{cccc}
\lambda_{j} & 0 & 0 & 0\\
0 & A_{j} & 0 & C_{j}\\
0 & 0 & \overline{\lambda_{j}} & 0\\
0 & B_{j} & 0 & D_{j}\end{array}\right].
\]
This, after an appropiate basis change, would be block-diagonal,
and from this we can conclude that the matrix in (\ref{eqn:smallerBlocks})
is normal.  The induction proceeds as before, with the stronger conclusion
that $B_j = C_j=0$  and $A_j = \overline{D_j}$  is diagonal.

(3) Applying (2) we find
\[
W=U\left[\begin{array}{cc}
D & 0\\
0 & \overline{D}\end{array}\right]U^{*}
\]
where $D$ is a diagonal unitary. Therefore
\[
\det(W)=\det(D)\det(\overline{D})=\det(D)\overline{\det(D)} \geq 0
\]
and since a unitary has determinant on the unit circle, we are done.
\end{proof}

\begin{cor}
Every matrix in $\mathbf{M}_{N}(\mathbb{H})$ is unitarily equivalent
to a upper-triangular matrix in $\mathbf{M}_{N}(\mathbb{H})$ that
has complex numbers on the diagonal.
\end{cor}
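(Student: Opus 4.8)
The plan is to obtain the corollary as the $k=1$ special case of part~(1) of Theorem~\ref{thm:QuaternionicSchur}, transported from $\mathbf{M}_{2N}(\mathbb{C})$ back to $\mathbf{M}_{N}(\mathbb{H})$ along the embedding $\chi$.

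Given $A$ in $\mathbf{M}_{N}(\mathbb{H})$, I would first form $X=\chi(A)$, which satisfies the quaternionic condition $X^{*}=X^{\sharp}$ by Lemma~\ref{lem:QuaternionicCondition}. Theorem~\ref{thm:QuaternionicSchur}(1) then supplies a symplectic unitary $U$ with
\[
U^{*}XU=\left[\begin{array}{cc}T & S\\ -\overline{S} & \overline{T}\end{array}\right],
\]
with $T$ upper-triangular and $S$ strictly upper-triangular. The right-hand side has exactly the block shape defining the image of $\chi$, so it equals $\chi(T+S\hat{j})$.

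Next I would pull $U$ itself back through $\chi$. A symplectic unitary satisfies $U^{*}=U^{\sharp}$ by Lemma~\ref{lem:SymplecticUnitaryStructure}, so it lies in the image of $\chi$; write $U=\chi(V)$. Because $\chi$ is an injective $\mathbb{R}$-algebra homomorphism that intertwines the two $*$-operations, the identity $U^{*}U=I$ translates to $V^{*}V=I$, i.e.\ $V$ is a unitary in $\mathbf{M}_{N}(\mathbb{H})$. Applying the inverse of $\chi$ on its image to $U^{*}XU=\chi(T+S\hat{j})$ gives $V^{*}AV=T+S\hat{j}$.

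It remains only to read this off at the level of quaternions. The $(i,j)$ entry of $T+S\hat{j}$ is $T_{ij}+S_{ij}\hat{j}$; for $i>j$ both $T_{ij}$ and $S_{ij}$ vanish, so $T+S\hat{j}$ is upper-triangular, while for $i=j$ the strict upper-triangularity of $S$ leaves $T_{ii}\in\mathbb{C}$ on the diagonal. Hence $V^{*}AV$ is the desired upper-triangular quaternionic matrix with complex diagonal. There is no real obstacle here beyond bookkeeping; the one point worth stating carefully is the dictionary between a symplectic unitary in $\mathbf{M}_{2N}(\mathbb{C})$ and a unitary in $\mathbf{M}_{N}(\mathbb{H})$, which is Lemma~\ref{lem:SymplecticUnitaryStructure} together with the fact that $\chi$ preserves adjoints.
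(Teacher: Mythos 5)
Your proof is correct and is exactly the intended derivation: the paper states this as an immediate corollary of Theorem~\ref{thm:QuaternionicSchur}(1), obtained by applying that theorem to $\chi(A)$ and transporting back along $\chi$, just as you do. The point you flag — that strict upper-triangularity of $S$ is what forces the diagonal of $T+S\hat{j}$ to be complex — is the right detail to make explicit.
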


There is an algorithm \cite{bunse1989quaternion} for the Schur decomposition
of quaternionic matrices. 

\begin{cor}
\label{cor:cojugetToComplex}
Every normal matrix in $\mathbf{M}_{N}(\mathbb{H})$
is unitarily equivalent to a diagonal matrix in $\mathbf{M}_{N}(\mathbb{C})$.
Every Hermitian matrix in $\mathbf{M}_{N}(\mathbb{H})$ is unitarily
equivalent to a diagonal matrix in $\mathbf{M}_{N}(\mathbb{R})$.
\end{cor}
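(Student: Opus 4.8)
The plan is to transport the statement across the embedding $\chi$ and invoke part~(2) of Theorem~\ref{thm:QuaternionicSchur} with $k=1$. Let $Y\in\mathbf{M}_{N}(\mathbb{H})$ be normal and put $X=\chi(Y)$. Since $\chi$ is an $\mathbb{R}$-algebra homomorphism with $\chi(Y^{*})=X^{*}$, the matrix $X$ is normal, and by Lemma~\ref{lem:QuaternionicCondition} it satisfies the quaternionic condition $X^{*}=X^{\sharp}$. Theorem~\ref{thm:QuaternionicSchur}(2) then supplies a symplectic unitary $U\in\mathbf{M}_{2N}(\mathbb{C})$ with
\[
U^{*}XU=\left[\begin{array}{cc} D & 0\\ 0 & \overline{D}\end{array}\right],
\]
$D$ diagonal. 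The right-hand side is precisely $\chi(D)$, where $D$ is regarded as an element of $\mathbf{M}_{N}(\mathbb{C})\subseteq\mathbf{M}_{N}(\mathbb{H})$ with zero $\hat{j}$-component.

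Next I would note that $U$ itself lies in the image of $\chi$: being a symplectic unitary it satisfies $U^{*}=U^{\sharp}$ by Lemma~\ref{lem:SymplecticUnitaryStructure}, hence $U=\chi(V)$ for a unique $V\in\mathbf{M}_{N}(\mathbb{H})$ by Lemma~\ref{lem:QuaternionicCondition}. Because $\chi$ is an injective homomorphism compatible with the adjoint, $U^{*}U=I$ forces $V^{*}V=I$, so $V$ is unitary. Then
\[
\chi\!\left(V^{*}YV\right)=U^{*}XU=\chi(D),
\]
and injectivity of $\chi$ yields $V^{*}YV=D$, a diagonal matrix in $\mathbf{M}_{N}(\mathbb{C})$, which is the first assertion. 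For the Hermitian case, if moreover $Y^{*}=Y$ then $X^{*}=X$, so $U^{*}XU$ is self-adjoint; comparing with the displayed block form forces $D^{*}=D$, i.e.\ $D$ has real entries, and $V^{*}YV=D\in\mathbf{M}_{N}(\mathbb{R})$.

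I do not expect a genuine obstacle here, since this is a corollary of Theorem~\ref{thm:QuaternionicSchur}. The only points requiring care are the two bookkeeping facts that make the descent work: that $\chi$ carries a diagonal complex matrix to a matrix of the exact block shape appearing in Theorem~\ref{thm:QuaternionicSchur}(2), and that conjugation by $U$ on $\mathbf{M}_{2N}(\mathbb{C})$ corresponds to conjugation by $V$ on $\mathbf{M}_{N}(\mathbb{H})$ — both of which are immediate once one knows $U=\chi(V)$ and that $\chi$ is an injective $*$-homomorphism. One could alternatively phrase the argument entirely on the $\mathbb{C}$-side and only at the last step read off the conclusion in $\mathbf{M}_{N}(\mathbb{H})$, exactly as the paper's stated strategy recommends.
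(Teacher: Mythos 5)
Your argument is correct and is exactly the intended derivation: the paper states this as an immediate corollary of Theorem~\ref{thm:QuaternionicSchur}(2), obtained by transporting the block-diagonal form back through the injective $*$-homomorphism $\chi$, precisely as you do. The bookkeeping steps you flag (that $\chi(D)$ has the block shape produced by the theorem, and that the symplectic unitary $U$ descends to a quaternionic unitary $V$) are the right ones and are handled correctly.
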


\begin{cor}
Every Hermitian self-dual matrix $X$ in $\mathbf{M}_{2N}(\mathbb{C})$
is of the form
\[
X=U\left[\begin{array}{cc}
D_{j} & 0\\
0 & D_{j}\end{array}\right]U^{*}
\]
for some symplectic unitary $U$ and a diagonal real matrix $D_{j}$.
\end{cor}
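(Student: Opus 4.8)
The plan is to read this off from Theorem~\ref{thm:QuaternionicSchur}(2) applied to the single matrix $X$. First I would note that the two stated symmetries combine into the quaternionic condition: if $X^{*}=X$ and $X^{\sharp}=X$ then $X^{*}=X=X^{\sharp}$, so in particular $X^{*}=X^{\sharp}$. Moreover a Hermitian matrix is normal. Hence Theorem~\ref{thm:QuaternionicSchur}(2), applied to the (trivially commuting) one-element family consisting of $X$ alone, yields a symplectic unitary $U$ with
\[
U^{*}XU=\left[\begin{array}{cc}
D & 0\\
0 & \overline{D}\end{array}\right]
\]
for some diagonal matrix $D$.

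Next I would argue that $D$ is real. Conjugating a Hermitian matrix by a unitary leaves it Hermitian, so $U^{*}XU$ is Hermitian; a diagonal Hermitian matrix has real diagonal entries, hence $D=\overline{D}$. Substituting and then conjugating back gives
\[
X=U\left[\begin{array}{cc}
D & 0\\
0 & D\end{array}\right]U^{*},
\]
with $U$ symplectic unitary and $D$ real diagonal, which is the assertion.

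I do not expect any real obstacle here: the substance is entirely inside Theorem~\ref{thm:QuaternionicSchur}, whose part (2) already encodes both the Kramers-type doubling of the spectrum (each eigenvalue appears in the $D$ block and again in the $\overline{D}$ block) and the existence of a single symplectic diagonalizing unitary. The only points that deserve a sentence of care are the elementary observation that ``Hermitian and self-dual'' is precisely ``Hermitian and quaternionic,'' and the remark that realness of the eigenvalues of a Hermitian matrix collapses $\overline{D}$ to $D$.
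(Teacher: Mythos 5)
Your proposal is correct and is exactly the derivation the paper intends: the corollary is stated without proof as an immediate consequence of Theorem~\ref{thm:QuaternionicSchur}(2), and your two observations (that Hermitian plus self-dual gives the quaternionic condition $X^{*}=X^{\sharp}$ together with normality, and that Hermitian forces $D=\overline{D}$ real) are precisely the glue needed.
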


These corollaries cause us to reconsider the concept of left and right
eigenvalues in $\mathbb{H}$ and focus on just those that are in $\mathbb{C}$.
For details on left eigenvalues and non-complex right eigenvalues,
consult \cite{FarenickSpectralThmquatern}. We put the complex right
eigenvalues in a simple context with the following. 

\begin{lem}
Suppose $\mathbf{v},\mathbf{w}\in\mathbb{C}^{N}$ and $\lambda\in\mathbb{C}$
and $A,B\in\mathbf{M}_{N}(\mathbb{C})$. Then
\[
\left[\begin{array}{cc}
A & B\\
-\overline{B} & \overline{A}\end{array}\right]\left[\begin{array}{c}
\mathbf{v}\\
\mathbf{w}\end{array}\right]=\lambda\left[\begin{array}{c}
\mathbf{v}\\
\mathbf{w}\end{array}\right]
\]
if and only if
\[
\left(A+B\hat{j}\right)\left(\mathbf{v}-\hat{j}\mathbf{w}\right)=\left(\mathbf{v}-\hat{j}\mathbf{w}\right)\lambda.
\]
Therefore $\lambda\in\mathbb{C}$ is a right eigenvalue of $X\in\mathbf{M}_{N}(\mathbb{H})$
if and only if $\lambda$ is an eigenvalue of $\chi\left(X\right)$.
\end{lem}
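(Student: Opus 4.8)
The plan is to verify the block-matrix identity by a direct computation and then translate it into the language of quaternions using the identities already developed for $\chi$. First I would write $X = \chi(A+B\hat j)$, so the hypothesis is exactly $\chi(A+B\hat j)\xi = \lambda\xi$ for $\xi = \bigl[\begin{smallmatrix}\mathbf v\\ \mathbf w\end{smallmatrix}\bigr]$. The key observation is that a column vector $\bigl[\begin{smallmatrix}\mathbf v\\ \mathbf w\end{smallmatrix}\bigr] \in \mathbb{C}^{2N}$ should be identified with the quaternionic vector $\mathbf v - \hat j\,\mathbf w \in \mathbb{H}^N$ (note the minus sign and the placement of $\hat j$ on the left, which matches the sign conventions baked into $\chi$ and into $\mathcal T$); the representation $\chi$ is built precisely so that left multiplication by $\chi(A+B\hat j)$ on $\mathbb{C}^{2N}$ corresponds to left multiplication by $A+B\hat j$ on $\mathbb{H}^N$ under this identification. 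Using $\beta\hat j = \hat j\bar\beta$ for $\beta\in\mathbb{C}$ (equivalently $B\hat j = \hat j\overline B$), I would expand $(A+B\hat j)(\mathbf v - \hat j\mathbf w)$ and collect the $\mathbb{C}$-part and the $\hat j$-part, checking that they reproduce exactly the two rows $A\mathbf v + B\mathbf w = \lambda\mathbf v$ and $-\overline B\,\mathbf v + \overline A\,\mathbf w = \lambda\mathbf w$ of the matrix equation. The right-hand side needs equal care: since $\lambda\in\mathbb{C}$ commutes with $\mathbf v$ but $(\hat j\mathbf w)\lambda = \hat j(\mathbf w\lambda) = \hat j\lambda'\mathbf w$ only after pushing $\lambda$ past $\hat j$, one sees $(\mathbf v - \hat j\mathbf w)\lambda = \mathbf v\lambda - \hat j\mathbf w\lambda$; writing $\lambda$ on the right is what makes the two sides match, and this is why the eigenvalue equation comes out as a \emph{right}-eigenvalue equation rather than a left one.

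The one genuine subtlety — and the step I expect to be the main obstacle — is getting every sign and every left/right placement of $\hat j$ consistent between three different conventions: the definition of $\chi$ (which has $-\overline B$ in the lower-left), the identification $\bigl[\begin{smallmatrix}\mathbf v\\ \mathbf w\end{smallmatrix}\bigr] \leftrightarrow \mathbf v - \hat j\mathbf w$, and the fact that $\mathbb H$ is noncommutative so scalars $\lambda$ must be kept strictly on the right throughout. A useful sanity check is to confirm the identification is an $\mathbb H$-bimodule statement only on the correct side: the map $\mathbb{C}^{2N}\to\mathbb H^N$, $\bigl[\begin{smallmatrix}\mathbf v\\ \mathbf w\end{smallmatrix}\bigr]\mapsto \mathbf v - \hat j\mathbf w$, intertwines $\chi(q)$ acting on the left with $q$ acting on the left, so the full computation is really just the associativity identity $\chi(q)\xi \leftrightarrow q(\mathbf v - \hat j\mathbf w)$ combined with tracking what $\lambda\xi$ becomes. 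Once the equivalence of the two eigenvector equations is established, I would note that $\xi = \bigl[\begin{smallmatrix}\mathbf v\\ \mathbf w\end{smallmatrix}\bigr]$ is a nonzero vector in $\mathbb{C}^{2N}$ if and only if $\mathbf v - \hat j\mathbf w$ is a nonzero vector in $\mathbb H^N$, so an eigenvector of $\chi(X)$ with eigenvalue $\lambda$ produces a (right-)eigenvector of $X$ with eigenvalue $\lambda$, and conversely.

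For the final sentence of the statement I would argue as follows. If $\lambda\in\mathbb{C}$ is a right eigenvalue of $X\in\mathbf M_N(\mathbb H)$, pick a nonzero $\mathbf u\in\mathbb H^N$ with $X\mathbf u = \mathbf u\lambda$; writing $\mathbf u = \mathbf v - \hat j\mathbf w$ with $\mathbf v,\mathbf w\in\mathbb C^N$ and applying the equivalence just proved gives $\chi(X)\bigl[\begin{smallmatrix}\mathbf v\\ \mathbf w\end{smallmatrix}\bigr] = \lambda\bigl[\begin{smallmatrix}\mathbf v\\ \mathbf w\end{smallmatrix}\bigr]$ with $\bigl[\begin{smallmatrix}\mathbf v\\ \mathbf w\end{smallmatrix}\bigr]\neq 0$, so $\lambda$ is an eigenvalue of $\chi(X)$. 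Conversely, any eigenvalue $\lambda\in\mathbb C$ of $\chi(X)$ comes with a nonzero eigenvector, which the equivalence converts back to a right-eigenvector of $X$ for $\lambda$. This is a complete biconditional; no appeal to the earlier factorization theorems is needed, though one could remark in passing that Corollary~\ref{cor:cojugetToComplex} shows that for normal $X$ these complex right eigenvalues account, with their conjugates, for the entire spectrum of $\chi(X)$.
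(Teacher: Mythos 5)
Your proposal is correct and is precisely the ``short, direct calculation'' the paper leaves to the reader: expanding $(A+B\hat j)(\mathbf v-\hat j\mathbf w)$ via $B\hat j=\hat j\overline B$ and $\hat j^{2}=-1$, matching the $1$- and $\hat j$-components against the two block rows, and noting that $\bigl[\begin{smallmatrix}\mathbf v\\ \mathbf w\end{smallmatrix}\bigr]\neq 0$ iff $\mathbf v-\hat j\mathbf w\neq 0$. Your attention to the sign conventions and to keeping $\lambda$ on the right is exactly the care the computation requires, so nothing is missing.
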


\begin{proof}
This a short, direct calculation.
\end{proof}

\section{Jordan canonical form}

Kramers degeneracy extends to the generalized eigenvectors used to
find the Jordan canonical form.

\begin{lem}
Suppose $X^\sharp = X^*$.  If 
\[
\left( X - \lambda I \right) ^r \mathbf v = 0  
\quad \mbox{and}  \quad
\left( X - \lambda I \right) ^{r-1} \mathbf v  \neq 0 
\]
then
\[
\left( X - \overline{\lambda} I \right) ^r \mathcal T \mathbf v = 0 
\quad \mbox{and}  \quad
\left( X - \overline{\lambda} I \right) ^{r-1} \mathcal T \mathbf v \neq 0. 
\]
\end{lem}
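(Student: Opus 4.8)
The plan is to reduce the whole statement to a single intertwining identity between the conjugate-linear operator $\mathcal{T}$ and $X$, and then push it through powers of $X-\lambda I$. First I would record two elementary facts about $\mathcal{T}$: it is conjugate-linear, so $\mathcal{T}(\mu\mathbf{u})=\overline{\mu}\,\mathcal{T}\mathbf{u}$ for $\mu\in\mathbb{C}$, and $\mathcal{T}^{2}=-I$ (a one-line computation in block form, of the same flavour as those already in the paper); in particular $\mathcal{T}$ is a real-linear bijection. Combining $\mathcal{T}^{2}=-I$ with the characterization $X=-\mathcal{T}\circ X\circ\mathcal{T}$ from Lemma~\ref{lem:QuaternionicCondition} yields $X\circ\mathcal{T}=\mathcal{T}\circ X$: a quaternionic matrix commutes with $\mathcal{T}$ when both are viewed as real-linear maps.

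The key step is the observation that, for any scalar $\lambda\in\mathbb{C}$,
\[
(X-\overline{\lambda}I)\circ\mathcal{T}=\mathcal{T}\circ(X-\lambda I),
\]
which is immediate from $X\circ\mathcal{T}=\mathcal{T}\circ X$ together with additivity and conjugate-homogeneity of $\mathcal{T}$ (the term $\overline{\lambda}\,\mathcal{T}\mathbf{u}$ is exactly $\mathcal{T}(\lambda\mathbf{u})$). A trivial induction on $r$, using this $r=1$ case and the fact that powers of $X-\overline{\lambda}I$ commute with one another, upgrades it to $(X-\overline{\lambda}I)^{r}\circ\mathcal{T}=\mathcal{T}\circ(X-\lambda I)^{r}$ for every $r\geq 0$.

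Applying this operator identity to the given vector $\mathbf{v}$ finishes the proof: $(X-\overline{\lambda}I)^{r}\,\mathcal{T}\mathbf{v}=\mathcal{T}\bigl((X-\lambda I)^{r}\mathbf{v}\bigr)=\mathcal{T}(\mathbf{0})=\mathbf{0}$, while $(X-\overline{\lambda}I)^{r-1}\,\mathcal{T}\mathbf{v}=\mathcal{T}\bigl((X-\lambda I)^{r-1}\mathbf{v}\bigr)$ is nonzero because $\mathcal{T}$ is injective --- indeed $\|\mathcal{T}\xi\|=\|\xi\|$ by (\ref{eq:TRoperatorPreservesOrtho}). Thus $\mathcal{T}\mathbf{v}$ is a generalized eigenvector of $X$ for the eigenvalue $\overline{\lambda}$, of exactly the same rank $r$.

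I do not anticipate a real obstacle here. The only things to watch are the bookkeeping of conjugate-linearity, so that $\lambda$ turns into $\overline{\lambda}$ on the correct side of the identity, and the temptation to manipulate Jordan chains directly rather than through the clean operator equation above. One could instead organize the argument around Lemma~\ref{lem:left-right-Lemma}(2), but iterating the intertwining relation on powers of $X-\lambda I$ is the most economical route, and it delivers the non-degeneracy clause ($(X-\overline{\lambda}I)^{r-1}\mathcal{T}\mathbf{v}\neq\mathbf{0}$) for free.
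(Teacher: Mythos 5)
Your proof is correct. It takes a route that is close in spirit to the paper's but mechanically different: the paper proves the \emph{norm} identity $\| Y \mathbf v \| = \| Y^{\sharp *} \mathcal T \mathbf v \|$ for an arbitrary matrix $Y$ (by writing $\mathcal T$ in terms of $Z$ and conjugation and using that $Z$ is unitary), and then invokes the quaternionic condition only once, to compute $\left( \left( X - \lambda I \right)^k \right)^{\sharp *} = \left( X - \overline{\lambda} I \right)^k$; vanishing or non-vanishing of the two vectors is then read off from equality of norms. You instead use the quaternionic condition up front, in the form $X \circ \mathcal T = \mathcal T \circ X$ (obtained from $X = -\mathcal T \circ X \circ \mathcal T$ and $\mathcal T^2 = -I$), and iterate the intertwining relation $\left( X - \overline{\lambda} I \right)^r \circ \mathcal T = \mathcal T \circ \left( X - \lambda I \right)^r$, finishing with injectivity of $\mathcal T$. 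Your identity is strictly stronger than the paper's norm equality --- it produces the actual vector $\left( X - \overline{\lambda} I \right)^r \mathcal T \mathbf v$ as $\mathcal T$ of $\left( X - \lambda I \right)^r \mathbf v$ --- and it is precisely the relation the paper itself uses later, in the proof of the Jordan basis theorem, where it writes $(X - \overline{\lambda} I)\, \mathcal T \mathbf b = \mathcal T (X - \lambda I) \mathbf b$. The paper's version buys a statement about general $Y$ with no symmetry hypothesis; yours buys a cleaner, reusable operator equation. All the supporting facts you cite ($\mathcal T^2 = -I$, conjugate-homogeneity of $\mathcal T$, and isometry of $\mathcal T$ via equation (\ref{eq:TRoperatorPreservesOrtho})) are either in the paper or one-line block computations, so there is no gap.
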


\begin{proof}
For any $Y$  we have  
\[
\| Y \mathbf v \|
= \| \overline{Y} \overline{\mathbf v} \| 
= \| Z \overline{Y} ZZ\overline{\mathbf v} \| 
\]
proving
\[
\| Y \mathbf v \|  =  \| Y ^{\sharp *} \mathcal T \mathbf v \| .
\]
Since
\[
\left( \left(  X -  \lambda I \right) ^k \right)^{\sharp *}
= \left(  X - \overline{\lambda} I \right) ^k
\]
the result follows.
\end{proof}

We see the general idea of a proof \cite{zhang2001jordan} of the Jordan decomposition for
a quaternionic matrix.  When we build a Jordan basis we need to respect
$ \mathcal T $ in two ways.  Whatever basis we pick for the subspace
corresponding to $\lambda$ with positive imaginary part, we apply
$ \mathcal T $ to get the basis for the subspace
corresponding to $\overline{\lambda}$.  When $\lambda $ is real we need to
pick generalized eigenvectors in pairs.

The Jordan form of a quaternionic matrix is not so elegant, as each
Jordan blocks larger than $2$-by-$2$ gets spread around to all four 
quadrants of the matrix.  We work directly with a Jordan basis and
then make our final conclusion in terms of quaternions.

\begin{thm}
Suppose $X^\sharp = X^*$. There is a Jordan basis for $X$ consisting
of pairs of the form
$ \mathbf v, \mathcal T \mathbf v$.
\end{thm}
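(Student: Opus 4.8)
The plan is to build the Jordan basis subspace-by-subspace, eigenvalue-by-eigenvalue, using the preceding lemma together with Lemma~\ref{lem:QuaternionicCondition}(3) to pair everything through $\mathcal{T}$. First I would fix an eigenvalue $\lambda$ of $X$ and split into two cases according to whether $\lambda$ is non-real or real. The key structural fact, which is essentially the content of the preceding lemma, is that $\mathcal{T}$ carries the generalized eigenspace $K_\lambda = \ker(X-\lambda I)^{2N}$ onto $K_{\overline\lambda}$, and moreover preserves the filtration by the powers $(X-\lambda I)^k$ (sending it to the filtration by powers of $(X-\overline\lambda I)$), since $((X-\lambda I)^k)^{\sharp*} = (X-\overline\lambda I)^k$ and the norm identity $\|Y\mathbf v\| = \|Y^{\sharp*}\mathcal{T}\mathbf v\|$ from that lemma's proof shows $\mathcal{T}$ maps $\ker(X-\lambda I)^k$ bijectively onto $\ker(X-\overline\lambda I)^k$.

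For the non-real case, I would pick, say, $\lambda$ with positive imaginary part and choose any Jordan basis $\mathcal{B}_\lambda$ for the restriction of $X$ to $K_\lambda$ by the usual complex Jordan theory (a disjoint union of chains $\mathbf w, (X-\lambda I)\mathbf w, (X-\lambda I)^2\mathbf w, \dots$). Then I would take $\mathcal{T}\mathcal{B}_\lambda$ as the basis for $K_{\overline\lambda}$. Because $\mathcal{T}$ is conjugate-linear and $X\mathcal{T} = -\mathcal{T}X$ is false in general but $X\mathcal{T}\mathbf u = \overline{(\text{corresponding action})}\,\mathcal{T}\mathbf u$ holds in the precise sense that $\mathcal{T}$ conjugates $(X-\lambda I)$-chains into $(X-\overline\lambda I)$-chains (this is exactly what Lemma~\ref{lem:left-right-Lemma}(2) does at the top of each chain, and the preceding lemma does at every level), the image $\mathcal{T}\mathcal{B}_\lambda$ is automatically a Jordan basis for $X$ on $K_{\overline\lambda}$. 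The pairs $\mathbf v,\mathcal{T}\mathbf v$ are then built in: each basis vector $\mathbf v \in \mathcal{B}_\lambda$ is paired with $\mathcal{T}\mathbf v \in \mathcal{T}\mathcal{B}_\lambda$, and one checks the whole collection $\mathcal{B}_\lambda \cup \mathcal{T}\mathcal{B}_\lambda$ is linearly independent since $K_\lambda \cap K_{\overline\lambda} = 0$ when $\lambda \neq \overline\lambda$.

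The real case is the genuine obstacle, and it is where I would spend the most care. Here $K_\lambda = K_{\overline\lambda}$ is a single $\mathcal{T}$-invariant subspace on which $X - \lambda I$ is nilpotent, and I need a Jordan basis that is itself closed under $\mathcal{T}$ pairing. The strategy is to build it top-down on the quotients of the nilpotent filtration: work with $N = X - \lambda I$ restricted to $K_\lambda$, and choose chain-generators level by level. At the longest chain length $r$, I pick generators in $\ker N^r / (\ker N^{r-1} + N(\ker N^{r+1}))$ — but I must choose them so the set is stable under $\mathcal{T}$. The point here is that $\mathcal{T}$ acts conjugate-linearly on each such quotient, preserves it (by the filtration-preservation above, with $\lambda = \overline\lambda$), and satisfies $\mathbf{u} \perp \mathcal{T}\mathbf{u}$ by \eqref{eq:TR-orthogonality}, so $\mathcal{T}$ is a fixed-point-free conjugate-linear involution-like map on a complex space; exactly as in the proof of Lemma~\ref{lem:transitivity}, I can then select generators in $\mathcal{T}$-pairs $\mathbf u_1,\mathcal{T}\mathbf u_1,\dots$ spanning that quotient. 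Each generator $\mathbf u$ spawns the chain $\mathbf u, N\mathbf u,\dots,N^{r-1}\mathbf u$, and $\mathcal{T}\mathbf u$ spawns $\mathcal{T}\mathbf u, N^{\sharp*}\text{-image}\dots$; since $N^{\sharp*} = N$ on this real-eigenvalue piece (because $(X-\lambda I)^{\sharp*} = X - \overline\lambda I = X - \lambda I$), the chain from $\mathcal{T}\mathbf u$ is precisely $\mathcal{T}\mathbf u, \mathcal{T}N\mathbf u, \dots, \mathcal{T}N^{r-1}\mathbf u$ up to the norm-preserving correspondence, so the pairing $\mathbf v \leftrightarrow \mathcal{T}\mathbf v$ extends down every chain. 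Descending to shorter chain lengths, I repeat with $N$ restricted appropriately, always choosing new generators $\mathcal{T}$-pairwise. The only thing to verify is that $\mathcal{T}$-stable complements can be chosen at each stage, which again follows from the fixed-point-free conjugate-linearity of $\mathcal{T}$ together with \eqref{eq:TRoperatorPreservesOrtho} — one picks an orthonormal-type basis of the relevant quotient in $\mathcal{T}$-pairs, just as in Lemma~\ref{lem:transitivity}. Assembling the pieces over all $\lambda$ (non-real $\lambda$ taken once in each conjugate pair, real $\lambda$ handled as above) gives the desired Jordan basis of $X$ consisting of pairs $\mathbf v, \mathcal{T}\mathbf v$.
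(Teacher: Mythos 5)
Your proposal is correct and follows essentially the same route as the paper: transport a Jordan basis from $N_\lambda$ to $N_{\overline\lambda}$ via $\mathcal{T}$ for non-real $\lambda$, and for real $\lambda$ choose chain generators in $\mathcal{T}$-pairs at each level of the nilpotent filtration. The paper realizes your quotients concretely as the $\mathcal{T}$-invariant subspaces $\ker(X-\lambda)^{r}\cap(\ker(X-\lambda)^{r-1})^{\perp}\cap(\mathrm{im}(X-\lambda))^{\perp}$, which is the cleanest way to make your appeal to $\mathbf u\perp\mathcal{T}\mathbf u$ literal rather than an assertion about an induced structure on a quotient.
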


\begin{proof}
Let $N_\lambda$ denote the subspace of all generalized eigenvectors
for $\lambda$ toghether with the zero vector. 
Recall that just treating $X$ as a complex matrix, the procedure to
select a Jordan basis involves selecting, for each $\lambda$, a
basis for $N_\lambda$ with the following property:  whenever
$\mathbf b$ is in this basis, then $(X - \lambda I)\mathbf b$
is either zero or back in this basis.

For $\lambda$ in the spectrum with positive imaginary part we make
such a choice and then apply $\mathcal T $ to get 
a set of vectors in $N_{\overline{\lambda}}$ that has the correct
number of elements to be a basis of $N_{\overline{\lambda}}$.
It will also be a linearly independent set since 
$Z$ and conjugation both preserve linear independence.  Since 
\[
(X - \overline{\lambda} I) \mathcal T \mathbf b
= \mathcal T (X -  \lambda  I) \mathbf b
\]
this basis of  $N_{\overline{\lambda}}$ has the desired property.

For $\lambda$ in the spectrum that is real we need to modify the
procedure for selecting the basis of $N_ \lambda $.  A common
procedure selects a basis $\mathbf{b}_{r,1},\dots,\mathbf{b}_{r,m_{r}}$
for
\begin{equation}
\ker\left(X-\lambda\right)^{r}
\cap \left( \ker\left(X-\lambda\right)^{r-1} \right) ^{\perp}
\cap \left( \mathrm{im}\left(X-\lambda\right) \right) ^{\perp}
\label{eq:endOfChains}
\end{equation}
and constructs for $N_{\lambda}$  the Jordan basis 
\[
\left\{
\left(X-\lambda\right)^{j}\mathbf{b}_{r,k}
\left|\strut\,
1\leq r\leq r_{\mathrm{max}},\ 
0\leq j\leq r-1,\ k=1,\dots,m_{r}
\right.
\right\}
\]
Since
\[
\mathcal{T}\left(X-\lambda\right)^{j}\mathbf{b}_{r,k}
=
\left(X-\lambda\right)^{j}\mathcal{T}\mathbf{b}_{r,k}
\]
we get the desired structure if the subspaces
(\ref{eq:endOfChains}) are $\mathcal{T}$-invariant. This follows
from the next two lemmas and the equality 
\[
\left( \mathrm{im}\left(X-\lambda\right) \right) ^{\perp}
=
\ker\left(X^*-\overline{\lambda}\right).
\]
We can now assemble the bases of the various $N_{\lambda}$ to get
a Jordan basis built from the Kramers pairs.
\end{proof}

\begin{lem}
If $X^\sharp = X^*$  then $\ker (X)$ is $\mathcal{T}$-invariant.
\end{lem}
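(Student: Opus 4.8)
The plan is to reduce this to the $\lambda = 0$ case of Lemma~\ref{lem:left-right-Lemma}(2), which is already available. If $\xi \in \ker X$ then $X\xi = 0 = 0\cdot\xi$, so $\xi$ is an eigenvector of $X$ with eigenvalue $0$; part (2) of that lemma, applied with $\lambda = 0$ and $\overline{0}=0$, immediately gives $X(\mathcal{T}\xi) = 0$, i.e. $\mathcal{T}\xi \in \ker X$. Since $\ker X$ is a linear subspace, $\mathcal{T}$-invariance is exactly this statement, and nothing more is needed.

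If a self-contained computation is preferred, I would instead invoke the characterization $X = -\mathcal{T}\circ X\circ\mathcal{T}$ from Lemma~\ref{lem:QuaternionicCondition}(3), together with the elementary identity $\mathcal{T}^{2} = -I$, a one-line check from the definition~(\ref{eq:time-reversal_anit-operator}). Then for any $\xi \in \ker X$,
\[
X(\mathcal{T}\xi) = -\mathcal{T}\bigl(X(\mathcal{T}^{2}\xi)\bigr) = -\mathcal{T}\bigl(X(-\xi)\bigr) = \mathcal{T}(X\xi) = \mathcal{T}(0) = 0,
\]
so again $\mathcal{T}\xi \in \ker X$.

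There is essentially no obstacle here: the only points to keep in front of the reader are that $\mathcal{T}$, although merely conjugate-linear, is invertible with $\mathcal{T}^{-1} = -\mathcal{T}$, and that the quaternionic condition $X^{\sharp}=X^{*}$ is precisely what makes $X$ anticommute with $\mathcal{T}$. Once those two facts are recorded, the conclusion follows in a single line. I would present the proof via the direct computation for transparency, noting in passing that it is simply the $\lambda = 0$ instance of the Kramers-pairing Lemma~\ref{lem:left-right-Lemma}.
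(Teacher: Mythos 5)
Your first argument is exactly the paper's proof: the lemma is the $\lambda=0$ instance of Lemma~\ref{lem:left-right-Lemma}(2), and that is all the paper says. The alternative computation via $X=-\mathcal{T}\circ X\circ\mathcal{T}$ and $\mathcal{T}^{2}=-I$ is also correct, but it is just a restatement of the same fact, so there is nothing to add.
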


\begin{proof}
This is a restatement of the $\lambda=0$ case of Lemma~\ref{lem:left-right-Lemma}(2).
\end{proof}

\begin{lem}
If a subspace $\mathcal H _0 $ is $\mathcal{T}$-invariant
then  $\left(\mathcal H _0 \right) ^\perp $ is also
$\mathcal{T}$-invariant.
\end{lem}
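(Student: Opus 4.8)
The plan is to exploit the key property of $\mathcal{T}$ recorded in equation~\eqref{eq:TRoperatorPreservesOrtho}, namely that $\langle \mathcal{T}\xi, \mathcal{T}\eta\rangle = \overline{\langle \xi,\eta\rangle}$. Suppose $\mathcal{H}_0$ is $\mathcal{T}$-invariant; I want to show that if $\eta \perp \mathcal{H}_0$ then $\mathcal{T}\eta \perp \mathcal{H}_0$. Take an arbitrary $\xi \in \mathcal{H}_0$. Since $\mathcal{T}$ is a conjugate-linear bijection (its square is $-I$, as one checks directly from the definition in~\eqref{eq:time-reversal_anit-operator}), and $\mathcal{H}_0$ is $\mathcal{T}$-invariant, the vector $\mathcal{T}\xi$ also lies in $\mathcal{H}_0$. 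Hence $\langle \eta, \mathcal{T}\xi\rangle = 0$.

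Now I apply~\eqref{eq:TRoperatorPreservesOrtho} with the substitution $\xi \mapsto \eta$ and $\eta \mapsto \mathcal{T}\xi$, together with $\mathcal{T}^2 = -I$:
\[
\langle \mathcal{T}\eta, \mathcal{T}(\mathcal{T}\xi)\rangle = \overline{\langle \eta, \mathcal{T}\xi\rangle} = 0,
\]
and since $\mathcal{T}(\mathcal{T}\xi) = -\xi$, conjugate-linearity of the first slot of $\mathcal{T}$ (or just bilinearity of the inner product) gives $\langle \mathcal{T}\eta, \xi\rangle = 0$. As $\xi \in \mathcal{H}_0$ was arbitrary, $\mathcal{T}\eta \in (\mathcal{H}_0)^\perp$, which is exactly the claim.

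There is no real obstacle here; the only point to be slightly careful about is that $\mathcal{T}$ is conjugate-linear rather than linear, so one should confirm at the outset that $\mathcal{H}_0$ being $\mathcal{T}$-invariant as a (complex) subspace makes sense and that $\mathcal{T}$ restricted to $\mathcal{H}_0$ is still a bijection of $\mathcal{H}_0$ — this is immediate from $\mathcal{T}^2 = -I$, since $\mathcal{T}(\mathcal{H}_0) \subseteq \mathcal{H}_0$ forces $\mathcal{H}_0 = \mathcal{T}^2(\mathcal{H}_0) \subseteq \mathcal{T}(\mathcal{H}_0)$. Everything else is a two-line manipulation of the inner product, so I would present it compactly rather than belaboring the routine steps.
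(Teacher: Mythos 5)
Your argument is correct and is essentially the paper's own proof: both rest on the identity $\left\langle \mathcal{T}\xi,\mathcal{T}\eta\right\rangle =\overline{\left\langle \xi,\eta\right\rangle }$ together with $\mathcal{T}^{2}=-I$ and the $\mathcal{T}$-invariance of $\mathcal{H}_{0}$, differing only in the order in which these facts are invoked. The extra remark that $\mathcal{T}$ restricts to a bijection of $\mathcal{H}_{0}$ is harmless but not needed, since the argument only uses $\mathcal{T}(\mathcal{H}_{0})\subseteq\mathcal{H}_{0}$.
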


\begin{proof}
If $\mathbf v$ is in $\left(\mathcal H _0 \right) ^\perp $
then for every $\mathbf w \in \mathcal H _0$ we have
\[
\left\langle \mathcal{T}\mathbf{v},\mathbf{w}\right\rangle 
=-\left\langle \mathcal{T}\mathbf{v},\mathcal{T}\mathcal{T}\mathbf{w}\right\rangle 
=-\overline{\left\langle \mathbf{v},\mathcal{T}\mathbf{w}\right\rangle }
=0 .
\]

\end{proof}

\begin{cor}
Suppose $X \in \mathbf M _{n} ( \mathbb H )$. There is an invertible
matrix $S \in \mathbf M _{n} ( \mathbb H )$ and a complex matrix $J$ in
Jordan Form such that $X = S^{-1} J S $.
\end{cor}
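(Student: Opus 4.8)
The plan is to carry the preceding theorem across the embedding $\chi$. Given $X$ in $\mathbf{M}_{n}(\mathbb{H})$, form $\chi(X)$ in $\mathbf{M}_{2n}(\mathbb{C})$; it lies in the image of $\chi$, so Lemma~\ref{lem:QuaternionicCondition} gives $\chi(X)^{\sharp}=\chi(X)^{*}$, and the preceding theorem applies, yielding a Jordan basis of $\mathbb{C}^{2n}$ made of Kramers pairs. One reads off from that proof that the basis can be listed in the order $\mathbf{f}_{1},\dots,\mathbf{f}_{n},\mathcal{T}\mathbf{f}_{1},\dots,\mathcal{T}\mathbf{f}_{n}$, where the first half $\mathbf{f}_{1},\dots,\mathbf{f}_{n}$ already runs through complete Jordan chains of $\chi(X)$: for an eigenvalue $\mu$ with $\mathrm{Im}\,\mu>0$ one puts an entire Jordan basis of $N_{\mu}$ into the $\mathbf{f}$'s (its $\mathcal{T}$-image being then a Jordan basis of $N_{\overline{\mu}}$), while for a real eigenvalue $\mu$ the $\mathcal{T}$-invariance of the subspace (\ref{eq:endOfChains}) lets one choose the generators there in pairs $\mathbf{b},\mathcal{T}\mathbf{b}$ and place the chains over the $\mathbf{b}$'s into the $\mathbf{f}$'s. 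Put $L=\mathrm{span}\{\mathbf{f}_{k}\}$ and $R=\mathcal{T}(L)=\mathrm{span}\{\mathcal{T}\mathbf{f}_{k}\}$; then $L\oplus R=\mathbb{C}^{2n}$, and since each $\mathbf{f}$-chain lies in $L$, the operator $\chi(X)$ leaves both $L$ and $R$ invariant.

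Next I would let $P$ be the matrix whose columns are $\mathbf{f}_{1},\dots,\mathbf{f}_{n},\mathcal{T}\mathbf{f}_{1},\dots,\mathcal{T}\mathbf{f}_{n}$, so that column $n+k$ is the $\mathcal{T}$-image of column $k$. As in the proof of Lemma~\ref{lem:SymplecticUnitaryStructure}, having column $n+k$ equal to the $\mathcal{T}$-image of column $k$ for every $k\le n$ forces $P\circ\mathcal{T}=\mathcal{T}\circ P$ (no unitarity is needed for this implication), equivalently $P=-\mathcal{T}\circ P\circ\mathcal{T}$, so by Lemma~\ref{lem:QuaternionicCondition} the matrix $P$ is in the image of $\chi$, say $P=\chi(S)$. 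Its columns are a basis, so $P$ is invertible, and because $\sharp$ reverses products one checks $\left(P^{-1}\right)^{*}=\left(P^{-1}\right)^{\sharp}$; since $\chi$ is an injective homomorphism, $S$ is invertible in $\mathbf{M}_{n}(\mathbb{H})$.

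Then I would compute $P^{-1}\chi(X)P$. The invariance of $L$ and of $R$ makes it block diagonal, with upper-left block $J$ the matrix of $\chi(X)|_{L}$ with respect to $\mathbf{f}_{1},\dots,\mathbf{f}_{n}$; since those vectors form a Jordan basis for $\chi(X)|_{L}$, this $J$ is a complex matrix in Jordan form. But $P^{-1}\chi(X)P=\chi(S^{-1}XS)$ is itself in the image of $\chi$, hence has the two-by-two block form appearing in the definition of $\chi$; matching that against the block-diagonal form forces the off-diagonal blocks to be zero and the lower-right block to equal $\overline{J}$, so $P^{-1}\chi(X)P=\chi(J)$ with $J$ regarded inside $\mathbf{M}_{n}(\mathbb{C})\subseteq\mathbf{M}_{n}(\mathbb{H})$. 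Injectivity of $\chi$ then gives $S^{-1}XS=J$, hence $X=SJS^{-1}$; replacing $S$ by $S^{-1}$ yields the statement as written.

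The main obstacle is the bookkeeping in the first paragraph: one must extract from the theorem a Jordan basis that both sorts into the two half-blocks (so that $P$ lands in the image of $\chi$) and keeps the first-half vectors grouped into genuine Jordan chains. The only real subtlety sits at the real eigenvalues, where the generalized eigenspace $N_{\mu}$ is $\mathcal{T}$-invariant and must be split as $L_{\mu}\oplus\mathcal{T}(L_{\mu})$ compatibly with the chain structure — precisely the point the theorem's proof settles by choosing the generators of (\ref{eq:endOfChains}) in $\mathcal{T}$-pairs. Everything after that is routine manipulation of $\chi$, $\sharp$, and $\mathcal{T}$.
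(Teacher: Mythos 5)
Your argument is correct and is precisely the derivation the paper leaves implicit: it takes the Kramers-paired Jordan basis from the theorem, checks (as the theorem's proof allows) that it can be sorted so the first half consists of complete Jordan chains with the second half their $\mathcal{T}$-images, and packages the resulting change-of-basis matrix as $\chi(S)$. The verifications that $P$ lies in the image of $\chi$, that $P^{-1}$ does too, and that $P^{-1}\chi(X)P=\chi(J)$ are all sound.
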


\section{Norms }

There are two operator norms to consider on $X$ in $\mathbf{M}_{N}(\mathbb{H})$,
that induced by quaternionic Hilbert space and that induced by complex
Hilbert space on $\chi\left(X\right)$. They end up identical.
\begin{thm}
Suppose $X$ is in $\mathbf{M}_{N}(\mathbb{H})$. Then using the norms
\[
\left\Vert \mathbf{v}\right\Vert =\left(\sum_{j=1}^{N}v_{j}^{*}v_{j}\right)^{\frac{1}{2}}
\]
on $\mathbb{H}^{N}$ and
\[
\left\Vert \mathbf{w}\right\Vert =\left(\sum_{j=1}^{2N}\overline{v_{j}}v_{j}\right)^{\frac{1}{2}}
\]
on $\mathbb{C}^{2N}$ we have 
\[
\sup_{\mathbf{v}\neq0}\frac{\left\Vert X\mathbf{v}\right\Vert }{\left\Vert \mathbf{v}\right\Vert }
=\sup_{\mathbf{w}\neq0}\frac{\left\Vert \chi\left(X\right)\mathbf{w}\right\Vert }{\left\Vert \mathbf{w}\right\Vert }.
\]
\end{thm}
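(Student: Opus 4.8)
The plan is to place $\mathbb{H}^{N}$ and $\mathbb{C}^{2N}$ on the same footing through an explicit bijection that is at once an isometry for the two norms and an intertwiner of $X$ with $\chi(X)$.  The bijection is the dictionary already lurking in the last lemma of Section~2: define $\Phi\colon\mathbb{C}^{2N}\rightarrow\mathbb{H}^{N}$ by
\[
\Phi\left(\left[\begin{array}{c}\mathbf{v}\\\mathbf{w}\end{array}\right]\right)=\mathbf{v}-\hat{j}\mathbf{w},\qquad\mathbf{v},\mathbf{w}\in\mathbb{C}^{N}.
\]
Every quaternion can be written as $\alpha-\hat{j}\beta$ for a unique pair $\alpha,\beta\in\mathbb{C}$ (convert to the standard form $\alpha+\beta'\hat{j}$ using $\hat{j}\beta=\overline{\beta}\hat{j}$), so $\Phi$ is a bijection.

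Two properties of $\Phi$ then need checking.  First, $\Phi$ intertwines the two actions,
\[
\Phi\left(\chi(X)\,\xi\right)=X\,\Phi(\xi),\qquad\xi\in\mathbb{C}^{2N},\ X\in\mathbf{M}_{N}(\mathbb{H}),
\]
which is precisely the computation behind the last lemma of Section~2, carried out without an eigenvalue on the right: writing $X=A+B\hat{j}$ and expanding $(A+B\hat{j})(\mathbf{v}-\hat{j}\mathbf{w})$ with the help of $B\hat{j}=\hat{j}\overline{B}$ and $\hat{j}^{2}=-1$ reproduces the coordinate expression of $\Phi\bigl(\chi(X)\xi\bigr)$.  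Second, $\Phi$ is norm-preserving: the $k$-th quaternionic coordinate of $\Phi(\xi)$ is $v_{k}-\overline{w_{k}}\hat{j}$, a quaternion of squared modulus $|v_{k}|^{2}+|w_{k}|^{2}$, so $\left\Vert\Phi(\xi)\right\Vert^{2}=\sum_{k}\left(|v_{k}|^{2}+|w_{k}|^{2}\right)=\left\Vert\xi\right\Vert^{2}$.

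Granting these, the theorem follows at once.  For $\xi\neq0$,
\[
\frac{\left\Vert\chi(X)\xi\right\Vert}{\left\Vert\xi\right\Vert}=\frac{\left\Vert\Phi\left(\chi(X)\xi\right)\right\Vert}{\left\Vert\Phi(\xi)\right\Vert}=\frac{\left\Vert X\,\Phi(\xi)\right\Vert}{\left\Vert\Phi(\xi)\right\Vert},
\]
and as $\xi$ runs over the nonzero vectors of $\mathbb{C}^{2N}$ its image $\Phi(\xi)$ runs over the nonzero vectors of $\mathbb{H}^{N}$, so taking suprema at both ends yields the stated equality.  I do not anticipate a genuine obstacle here; the only delicate point is the bookkeeping with $\hat{j}$ and complex conjugation, in particular aligning the sign conventions in the definitions of $\chi$ and of $\Phi$ so that the intertwining identity holds exactly.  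An alternative, heavier route would be to compute each operator norm as the square root of the largest eigenvalue of the associated positive Hermitian matrix --- $X^{*}X$ on one side and $\chi(X)^{*}\chi(X)=\chi(X^{*}X)$ on the other --- and then match the two using Corollary~\ref{cor:cojugetToComplex}.
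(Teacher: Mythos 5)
Your proposal is correct and follows essentially the same route as the paper: the paper's proof consists precisely of verifying that the correspondence $\left[\begin{smallmatrix}\mathbf{v}\\ \mathbf{w}\end{smallmatrix}\right]\mapsto\mathbf{v}-\hat{j}\mathbf{w}$ preserves norms and carries $\chi(X)\xi$ to $X$ applied to the corresponding quaternionic vector, which is exactly your map $\Phi$. The only difference is presentational — you package the four norm computations as the statement that $\Phi$ is a bijective isometric intertwiner — and the intertwining identity you defer does check out by the same expansion using $B\hat{j}=\hat{j}\overline{B}$ that the paper carries out.
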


\begin{proof}
Utilizing also the norm on $\mathbb{C}^{2N}$ we calculate the four
relevant norms: 
\[
\left\Vert \left[\begin{array}{c}
\mathbf{v}\\
\mathbf{w}\end{array}\right]\right\Vert ^{2}
=
\left\Vert \mathbf{v}\right\Vert ^{2}+\left\Vert \mathbf{w}\right\Vert ^{2}.
\]
\[
\left\Vert \mathbf{v}-\hat{j}\mathbf{w}\right\Vert ^{2}
=
\left\Vert \mathbf{v}\right\Vert ^{2}+\left\Vert \mathbf{w}\right\Vert ^{2}.
\]
\[
\left\Vert \left[\begin{array}{cc}
A & B\\
-\overline{B} & \overline{A}\end{array}\right]\left[\begin{array}{c}
\mathbf{v}\\
\mathbf{w}\end{array}\right]\right\Vert ^{2}
=
\left\Vert A\mathbf{v}+B\mathbf{w}\right\Vert ^{2}+\left\Vert \overline{A}\mathbf{w}-\overline{B}\mathbf{v}\right\Vert ^{2}.
\]
\begin{align*}
\left\Vert \left(A+B\hat{j}\right)\left(\mathbf{v}-\hat{j}\mathbf{w}\right)\right\Vert ^{2} 
 & =\left\Vert A\mathbf{v}+B\mathbf{w}+\hat{j}\left(\overline{B}\mathbf{v}-\overline{A}\mathbf{w}\right)\right\Vert ^{2}\\
 & =\left\Vert A\mathbf{v}+B\mathbf{w}\right\Vert ^{2}+\left\Vert \overline{B}\mathbf{v}-\overline{A}\mathbf{w}\right\Vert ^{2}.
\end{align*}
The result follows.
\end{proof}

\section{Singular value decomposition}

As in the complex case, a slick way to prove there is a singular value
decomposition is to work out the polar decomposition and then use
the spectral theorem on the positive part.

In \cite{zhang1997quaternions} it is stated that  ``a little more work is
needed for the singular case'' when discussing the polar decomposition.
The extra work involves padding out a quaternionic partial isometry
to be a quaternionic unitary (so symplectic unitary.)

We remind the reader that $U$  is a partial isometry when
$(U^*U)^2 = U^*U$, or equivalently $UU^*U = U$ or $U^*UU^* = U^*$
or $(UU^*)^2 = UU^*$.  If we restrict the domain and range of $U$
we find it is an isometry from  $\left(\ker(U)\right)^{\perp}$
to  $\left(\ker(U^*)\right)^{\perp}$.

\begin{lem}
\label{lem:IsometriesExtendQuaterionically}
Suppose $U^{*}=U^{\sharp}$and
$U$ is a partial isometry in $\mathbf{M}_{2N}(\mathbb{C})$. There
is a symplectic unitary $W$ in $\mathbf{M}_{N}(\mathbb{C})$ so that
$W\xi=U\xi$ for all $\xi\perp\ker(U)$.
\end{lem}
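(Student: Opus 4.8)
The plan is to reduce the problem to the transitivity lemma (Lemma~\ref{lem:transitivity}) and the symplectic structure lemma (Lemma~\ref{lem:SymplecticUnitaryStructure}), by showing that both $\ker(U)$ and $\ker(U^*)$ are $\mathcal{T}$-invariant and then choosing Kramers-paired orthonormal bases on either side. First I would note that since $U^* = U^\sharp$ is the quaternionic condition, Lemma~\ref{lem:QuaternionicCondition} gives $U = -\mathcal{T}\circ U\circ\mathcal{T}$, which immediately implies $U^*U$ and $UU^*$ also satisfy the quaternionic condition and commute with the relevant conjugate-linear structure; in particular $\ker(U) = \ker(U^*U)$ is $\mathcal{T}$-invariant (this is the $\lambda = 0$ case of Lemma~\ref{lem:left-right-Lemma}(2), exactly as used in the Jordan section), and likewise $\ker(U^*) = \ker(UU^*)$ is $\mathcal{T}$-invariant. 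Then $\mathcal{H}_1 := \ker(U)^\perp$ and $\mathcal{H}_2 := \ker(U^*)^\perp$ are $\mathcal{T}$-invariant subspaces of equal (even) dimension $2M$, and $U$ restricts to an isometry from $\mathcal{H}_1$ onto $\mathcal{H}_2$.

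Next I would build the unitary $W$ by prescribing its columns in Kramers pairs. Using \eqref{eq:TRoperatorPreservesOrtho} and \eqref{eq:TR-orthogonality} exactly as in the proof of Lemma~\ref{lem:transitivity}, pick an orthonormal basis of $\mathcal{H}_1$ of the form $\mathbf{v}_1, \mathcal{T}\mathbf{v}_1, \dots, \mathbf{v}_M, \mathcal{T}\mathbf{v}_M$; this is possible precisely because $\mathcal{H}_1$ is $\mathcal{T}$-invariant and $\mathcal{T}$ maps orthonormal sets to orthonormal sets with $\mathbf{v}\perp\mathcal{T}\mathbf{v}$. Set $\mathbf{w}_i := U\mathbf{v}_i$; since $U$ is an isometry on $\mathcal{H}_1$ these form an orthonormal basis of $\mathcal{H}_2$, and because $U\circ\mathcal{T} = \mathcal{T}\circ U$ on $\mathcal{H}_1$ (again from $U = -\mathcal{T}\circ U\circ\mathcal{T}$, since $\mathcal{H}_1$ is $\mathcal{T}$-invariant), we get $U\mathcal{T}\mathbf{v}_i = \mathcal{T}\mathbf{w}_i$, so $\mathbf{w}_1, \mathcal{T}\mathbf{w}_1, \dots, \mathbf{w}_M, \mathcal{T}\mathbf{w}_M$ is a Kramers-paired orthonormal basis of $\mathcal{H}_2$. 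Now extend both Kramers-paired lists to Kramers-paired orthonormal bases of all of $\mathbb{C}^{2N}$ by running the same selection procedure on the orthogonal complements (which are $\mathcal{T}$-invariant by the last lemma of the Jordan section), obtaining lists $\mathbf{v}_1,\dots,\mathbf{v}_N,\mathcal{T}\mathbf{v}_1,\dots,\mathcal{T}\mathbf{v}_N$ and $\mathbf{w}_1,\dots,\mathbf{w}_N,\mathcal{T}\mathbf{w}_1,\dots,\mathcal{T}\mathbf{w}_N$, chosen so that $U\mathbf{v}_i = \mathbf{w}_i$ and $U\mathcal{T}\mathbf{v}_i = \mathcal{T}\mathbf{w}_i$ for $i\le M$. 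Define $W$ to be the linear map sending the first list to the second. By Lemma~\ref{lem:SymplecticUnitaryStructure}(5), since both ordered bases respect the $\mathcal{T}$-pairing, $W$ is a symplectic unitary (indeed $W = P_2 P_1^*$ where $P_1, P_2$ are the symplectic unitaries whose columns are the two lists). By construction $W\xi = U\xi$ for every $\xi$ in $\mathcal{H}_1 = \ker(U)^\perp$, since they agree on the spanning set $\mathbf{v}_1,\dots,\mathbf{v}_M,\mathcal{T}\mathbf{v}_1,\dots,\mathcal{T}\mathbf{v}_M$.

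The main obstacle, and the only place real care is needed, is confirming that $U$ genuinely intertwines $\mathcal{T}$ on the relevant subspace — that is, that $U\mathcal{T}\mathbf{v} = \mathcal{T}U\mathbf{v}$ holds for $\mathbf{v}\in\ker(U)^\perp$. A priori $U = -\mathcal{T}\circ U\circ\mathcal{T}$ only says $U\mathcal{T}\mathbf{v} = -\mathcal{T}U\mathcal{T}\mathcal{T}\mathbf{v}$, and one needs $\mathcal{T}^2 = -I$ together with the $\mathcal{T}$-invariance of $\ker(U)^\perp$ to turn this into the clean intertwining relation — equivalently, one uses that $U\mathbf{v}_i$ lands in $\ker(U^*)^\perp$ and that $\mathcal{T}$ preserves that subspace too. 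A secondary, purely bookkeeping, point is checking that the final pairing-preserving bases can be chosen simultaneously compatibly on both sides; this is immediate once one observes that extending a Kramers-paired orthonormal set on a $\mathcal{T}$-invariant subspace to one on all of $\mathbb{C}^{2N}$ is exactly the construction already carried out in Lemma~\ref{lem:transitivity}, applied to the $\mathcal{T}$-invariant orthogonal complement. Everything else is the same Gram--Schmidt-with-$\mathcal{T}$ argument used repeatedly above.
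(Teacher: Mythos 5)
Your proof is correct and follows essentially the same route as the paper: both keep $U$ on $\left(\ker(U)\right)^{\perp}$ and fill in a $\mathcal{T}$-compatible bijection between Kramers-paired bases of $\ker(U)$ and $\ker(U^{*})$, concluding symplecticity from the fact that the resulting unitary commutes with $\mathcal{T}$ (equivalently, from the column criterion of Lemma~\ref{lem:SymplecticUnitaryStructure}). The intertwining $U\circ\mathcal{T}=\mathcal{T}\circ U$ that you single out as the main obstacle is in fact automatic for every quaternionic matrix, since $U=-\mathcal{T}\circ U\circ\mathcal{T}$ combined with $\mathcal{T}^{2}=-I$ gives it globally, with no need to restrict to $\left(\ker(U)\right)^{\perp}$.
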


\begin{proof}
If $\mathbf{v}$ is in $\ker(U)$ then by Lemma~\ref{lem:left-right-Lemma},
$\mathcal{T}\mathbf{v}$ is also in $\ker(U)$. Since $\mathbf{v}$
and $\mathcal{T}\mathbf{v}$ are orthogonal, we can show that $\ker(U)$
has even dimension $2m$ and that is has a basis for the form 
\[
\mathbf{v}_{1},\dots,\mathbf{v}_{m},\mathcal{T}\mathbf{v}_{1},\dots,\mathcal{T}\mathbf{v}_{m}.
\]
We are working in finite dimensions so the dimension of $\ker(U^{*})$
is also $2m$ and we select for it a basis 
\[
\mathbf{w}_{1},\dots,\mathbf{w}_{m},\mathcal{T}\mathbf{w}_{1},\dots,\mathcal{T}\mathbf{w}_{m}.
\]
We can define $W$ to agree with $U$ on $\left(\ker(U)\right)^{\perp}$
and to send $\mathbf{v}_{j}$ to $\mathbf{w}_{j}$ and $\mathcal{T}\mathbf{v}_{j}$
to $\mathcal{T}\mathbf{w}_{j}$ and so get a unitary that commutes
with $\mathcal{T},$ which means it is symplectic.
\end{proof}

\begin{lem}
Suppose $X^{*}=X^{\sharp}$ in $\mathbf{M}_{2N}(\mathbb{C})$. Then
there is a unitary $U$ and a positive semidefinite $P$ with $U^{*}=U^{\sharp}$and
$P^{*}=P^{\sharp}$ and $X=UP$.
\end{lem}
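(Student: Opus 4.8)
The plan is to reduce everything to one structural fact: the set $\mathcal{Q}$ of matrices satisfying the quaternionic condition $X^{*}=X^{\sharp}$ is a norm-closed, unital, real $*$-subalgebra of $\mathbf{M}_{2N}(\mathbb{C})$ that is moreover closed under inversion, and then to run the usual construction of the polar decomposition entirely inside $\mathcal{Q}$. That $\mathcal{Q}$ is a real subalgebra containing $I$ and stable under $*$ is immediate from the first two lemmas of the paper (it is the image of the injective $\mathbb{R}$-algebra homomorphism $\chi$, and $(\chi(Y))^{*}=\chi(Y^{*})$); it is norm-closed because it is a linear subspace of a finite-dimensional space; and it is closed under inversion because, by Cayley--Hamilton, $G^{-1}$ is a polynomial in $G$ whenever $G$ is invertible. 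Equivalently, $\mathcal{Q}$ is the fixed-point set of the conjugate-linear, multiplicative involution $\Phi(X)=(X^{\sharp})^{*}$, which commutes with $*$ and was already used in the $G=X+iY$ lemma; this viewpoint makes all the stability properties transparent.

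First I would extract the positive part. Put $P=(X^{*}X)^{1/2}$. Since $X\in\mathcal{Q}$ and $\mathcal{Q}$ is a $*$-algebra, $X^{*}X\in\mathcal{Q}$ and is positive semidefinite; its unique positive square root is a norm-limit of real polynomials in $X^{*}X$ (Weierstrass approximation on its spectrum, which lies in $[0,\infty)$), so $P\in\mathcal{Q}$ as well. Thus $P\geq 0$ and $P^{*}=P^{\sharp}$.

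Next I would handle the partial-isometry part. Let $U_{0}$ be the polar partial isometry of $X$, the unique partial isometry with $X=U_{0}P$ and $\ker U_{0}=\ker P=\ker X$. The key claim is $U_{0}\in\mathcal{Q}$. The cleanest argument uses the approximation $U_{0}=\lim_{\varepsilon\to 0^{+}}X(X^{*}X+\varepsilon I)^{-1/2}$: for each $\varepsilon>0$ the matrix $X^{*}X+\varepsilon I$ is positive, invertible, and in $\mathcal{Q}$, hence so is its inverse, hence (by the square-root argument of the previous step applied to that inverse) so is $(X^{*}X+\varepsilon I)^{-1/2}$, hence so is $X(X^{*}X+\varepsilon I)^{-1/2}$; since $\mathcal{Q}$ is closed, the limit $U_{0}$ lies in $\mathcal{Q}$. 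Alternatively, applying $\Phi$ to $X=U_{0}P$ gives $X=\Phi(U_{0})P$ with $\Phi(U_{0})$ a partial isometry whose initial space is the support projection of $\Phi(P)=P$, so uniqueness of the polar decomposition forces $\Phi(U_{0})=U_{0}$. Finally, with $U_{0}$ a partial isometry satisfying $U_{0}^{*}=U_{0}^{\sharp}$, Lemma~\ref{lem:IsometriesExtendQuaterionically} supplies a symplectic unitary $W$ agreeing with $U_{0}$ on $(\ker U_{0})^{\perp}$; since $\mathrm{im}(P)\subseteq(\ker P)^{\perp}=(\ker U_{0})^{\perp}$ we get $WP=U_{0}P=X$, and by Lemma~\ref{lem:SymplecticUnitaryStructure} a symplectic unitary is unitary with $W^{*}=W^{\sharp}$. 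So $U:=W$ and $P$ are as required.

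The only step with genuinely quaternionic content is showing the polar partial isometry inherits the quaternionic condition rather than merely being some complex partial isometry; both the approximation argument and the polar-uniqueness argument settle this, and the residual difficulty in the singular case — that $U_{0}$ is not itself unitary — is exactly what the padding Lemma~\ref{lem:IsometriesExtendQuaterionically} is designed to absorb. Everything else is the classical complex polar decomposition applied verbatim inside the closed real $*$-algebra $\mathcal{Q}$.
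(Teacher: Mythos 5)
Your proposal is correct and follows essentially the same route as the paper: form the minimal polar decomposition $X=U_{0}P$ with $P=(X^{*}X)^{1/2}$, verify that both factors satisfy the quaternionic condition because the functional calculus (equivalently, polynomials, inverses, and norm limits) preserves it, and then pad the partial isometry out to a symplectic unitary via Lemma~\ref{lem:IsometriesExtendQuaterionically}. The only difference is cosmetic: the paper checks $W^{\sharp}=W^{*}$ directly from $W=Xf(X^{*}X)$ and $(f(Y))^{\sharp}=f(Y^{\sharp})$, while you package the same fact as closure of the fixed-point algebra of $\Phi(X)=(X^{\sharp})^{*}$.
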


\begin{proof}
Let $f$ be a continuous function on the positive reals with $f(0)=0$
and $f(\lambda)=\lambda^{-\frac{1}{2}}$ for every nonzero eigenvalue
of $X^{*}X$. Then let
\[
W=Xf(X^{*}X).
\]
The usual calculations in functional calculus tell us $W$ is a partial
isometry and that $X=WP$ for $P=\left(X^{*}X\right)^{\frac{1}{2}}$.
Working with monomials, polynomials and then taking limits, we can
show
\[
\left(f(Y)\right)^{\sharp}=f\left(Y^{\sharp}\right)
\]
 for any positive operator $Y$ and so 
 \[
W^{\sharp}=f\left(X^{\sharp}X^{*\sharp}\right)X^{\sharp}=f\left(X^{*}X\right)X^{*}=W^{*}.\]
Also $P^{\sharp}=P=P^{*}$.

We just showed that the matrices in the minimal polar decomposition
are quaternionic. We use Lemma~\ref{lem:IsometriesExtendQuaterionically}
to finish the argument.
\end{proof}

As expected, the polar decomposition leads to a singular value decomposition.

\begin{thm}
Suppose $X^{*}=X^{\sharp}$ in $\mathbf{M}_{2N}(\mathbb{C})$. There
are symplectic unitary matrices $U$ and $V$  and a diagonal matrix
$D$ with nonnegative real entries and $D^{\sharp}=D^{*}$ so that
$X=UDV$.
\end{thm}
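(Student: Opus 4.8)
The plan is to derive the singular value decomposition from the polar decomposition that was just established, mirroring the complex case but keeping track of the symplectic/dual structure throughout. First I would invoke the preceding lemma to write $X = WP$ with $W$ a symplectic unitary satisfying $W^{*}=W^{\sharp}$ and $P$ positive semidefinite with $P^{*}=P^{\sharp}$. Since $P$ is both Hermitian and self-dual, part (2) of Theorem~\ref{thm:QuaternionicSchur} (or the corollary on Hermitian self-dual matrices) applies to the single normal matrix $P$: there is a symplectic unitary $V_{0}$ with
\[
V_{0}^{*}PV_{0}=\left[\begin{array}{cc} D_{0} & 0\\ 0 & \overline{D_{0}}\end{array}\right],
\]
and because $P$ is positive semidefinite its eigenvalues are nonnegative reals, so $\overline{D_{0}}=D_{0}$ and the displayed matrix is a genuine diagonal matrix $D$ with nonnegative real entries. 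One checks directly from the block form that such a $D$ satisfies $D^{\sharp}=D=D^{*}$.

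Next I would assemble the factorization. From $P = V_{0} D V_{0}^{*}$ we get
\[
X = W P = W V_{0} D V_{0}^{*} = (W V_{0})\, D\, (V_{0}^{*}).
\]
Set $U = W V_{0}$ and $V = V_{0}^{*}$. Both are products (and inverses/adjoints) of symplectic unitaries; since the symplectic unitary group is closed under multiplication and under taking adjoints (equivalently, the quaternionic condition $U^{*}=U^{\sharp}$ is preserved under these operations by the axioms of the dual operation listed in the excerpt, together with the fact that for unitaries $U^{*}=U^{\sharp}$ is equivalent to $U$ being symplectic via Lemma~\ref{lem:SymplecticUnitaryStructure}), both $U$ and $V$ are symplectic unitary. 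This gives $X = UDV$ with all the asserted properties.

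There is essentially no hard obstacle here; the only point requiring a moment's care is the bookkeeping that the class of symplectic unitaries is a group, so that $U=WV_{0}$ and $V=V_{0}^{*}$ land back in it — this is immediate from Lemma~\ref{lem:SymplecticUnitaryStructure} and the multiplicativity and adjoint-compatibility of $\sharp$. A secondary point worth stating explicitly is why the diagonal matrix coming out of the spectral theorem can be taken with the block-doubled shape $D = \operatorname{diag}(D_{0},D_{0})$ and hence satisfies $D^{\sharp}=D^{*}$: this is exactly the self-dual Hermitian case, where the two diagonal blocks coincide because the eigenvalues are real. I would phrase the write-up in three short steps — polar decomposition, spectral diagonalization of $P$, recombination — and leave the routine verifications of the dual-operation identities to the reader.
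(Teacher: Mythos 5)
Your proposal is correct and follows essentially the same route as the paper: take the quaternionic polar decomposition $X=WP$, diagonalize the positive self-dual-adjoint part $P$ by a symplectic unitary via Theorem~\ref{thm:QuaternionicSchur}(2), note the diagonal is real and nonnegative so the two blocks coincide, and absorb the unitaries. Your added remarks on the group structure of symplectic unitaries and on why $D^{\sharp}=D^{*}$ are correct details the paper leaves implicit.
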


\begin{proof}
We take a quaternionic polar decomposition $X=WP$. Since $P$ is
positive, we apply Theorem~\ref{thm:QuaternionicSchur} to get symplectic
unitary matrices $Q$ and $V$ and diagonal matrix $D$ so that $P=QDV$.
The eigenvalues of $P$ are nonnegative, so the same is true for the
diagonal elements of $D$ and we have the needed factorization $X=(WQ)DV$. 
\end{proof}

\section{QR factorization}

It is easy to use Lemma~\ref{lem:transitivity} to get over $\mathbb{H}$
a QR factorization theorem. Notice that upper triangular matrices are
sent by $\chi$ to matrices that are block upper-triangular.

\begin{thm}
If $X^{*}=X^{\sharp}$ in $\mathbf{M}_{2N}(\mathbb{C})$ the there
is a symplectic unitary $Q$ and $R$ of the form
\[
R=\left[\begin{array}{cc}
A & B\\
-\overline{B} & \overline{A}\end{array}\right]
\]
with $A$ and $B$ upper triangular. If $X\in\mathbf{M}_{N}(\mathbb{H})$
then there is a untiary $Q$ and upper triangular matrix R in $\mathbf{M}_{N}(\mathbb{H})$
so that $X=QR$.
\end{thm}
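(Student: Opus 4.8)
The plan is to run an inductive Gram--Schmidt column by column, but at each step to replace the rotation of ordinary QR by a \emph{symplectic} unitary supplied by Lemma~\ref{lem:transitivity}; this mirrors the proof of Theorem~\ref{thm:QuaternionicSchur}, with the first column of $X$ playing the role of a common eigenvector. I induct on $N$. The case $N\le 1$ is immediate: a matrix $\left[\begin{smallmatrix}a&b\\-\overline{b}&\overline{a}\end{smallmatrix}\right]=\chi(a+b\hat{j})$ already has the required shape, with $A=[a]$ and $B=[b]$ (vacuously) upper triangular, so $Q=I$ works.

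For the inductive step, let $\mathbf{c}$ be the first column of $X$. If $\mathbf{c}=0$, set $U_{1}=I$; otherwise Lemma~\ref{lem:transitivity} gives a symplectic unitary $U_{1}$ with $U_{1}\mathbf{e}_{1}=\mathbf{c}/\lVert\mathbf{c}\rVert$. Then $U_{1}^{*}X$ is again quaternionic (either compute $(U_{1}^{*}X)^{\sharp}=X^{\sharp}(U_{1}^{*})^{\sharp}=X^{*}U_{1}=(U_{1}^{*}X)^{*}$ using $U_{1}^{\sharp}=U_{1}^{*}$, or observe that both $U_{1}^{*}$ and $X$ commute with $\mathcal{T}$, since Lemma~\ref{lem:QuaternionicCondition}(3) combined with $\mathcal{T}^{2}=-I$ says ``quaternionic $\iff$ commutes with $\mathcal{T}$''). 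Its first column is $\lVert\mathbf{c}\rVert\,\mathbf{e}_{1}$; since it commutes with $\mathcal{T}$, since $\mathcal{T}\mathbf{e}_{1}=\mathbf{e}_{N+1}$, and since $\lVert\mathbf{c}\rVert$ is real, its $(N+1)$st column is $\lVert\mathbf{c}\rVert\,\mathbf{e}_{N+1}$. Writing $U_{1}^{*}X=\left[\begin{smallmatrix}A'&B'\\-\overline{B'}&\overline{A'}\end{smallmatrix}\right]$, this forces the first column of $A'$ to be $\lVert\mathbf{c}\rVert e_{1}$ and the first column of $B'$ to be $0$; deleting rows and columns $1$ and $N+1$ leaves a matrix $X''\in\mathbf{M}_{2(N-1)}(\mathbb{C})$ of the same $\left[\begin{smallmatrix}A''&B''\\-\overline{B''}&\overline{A''}\end{smallmatrix}\right]$ form.

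Apply the induction hypothesis to $X''$ to obtain a symplectic unitary $\tilde{U}\in\mathbf{M}_{2(N-1)}(\mathbb{C})$ with $\tilde{U}^{*}X''$ upper triangular of quaternionic form, and let $U_{2}\in\mathbf{M}_{2N}(\mathbb{C})$ fix $\mathbf{e}_{1}$ and $\mathbf{e}_{N+1}$ and act as $\tilde{U}$ on the span of the remaining $2N-2$ basis vectors. This $U_{2}$ is symplectic unitary: by Lemma~\ref{lem:SymplecticUnitaryStructure}(4) it suffices that $U_{2}$ commute with $\mathcal{T}$, and it does on each of the two $\mathcal{T}$-invariant and $U_{2}$-invariant pieces, namely $\mathrm{span}\{\mathbf{e}_{1},\mathbf{e}_{N+1}\}$ (where $U_{2}=I$) and its orthocomplement (where $U_{2}=\tilde{U}$). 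Put $Q=U_{1}U_{2}$, again symplectic unitary. Because $U_{2}^{*}$ fixes $\mathbf{e}_{1}$ and $\mathbf{e}_{N+1}$, columns $1$ and $N+1$ of $R:=Q^{*}X=U_{2}^{*}U_{1}^{*}X$ remain $\lVert\mathbf{c}\rVert\,\mathbf{e}_{1}$ and $\lVert\mathbf{c}\rVert\,\mathbf{e}_{N+1}$, while on the remaining $2N-2$ coordinates $U_{2}^{*}$ sends $X''$ to $\tilde{U}^{*}X''$; reading off blocks gives $R=\left[\begin{smallmatrix}A&B\\-\overline{B}&\overline{A}\end{smallmatrix}\right]$ with $A=\left[\begin{smallmatrix}\lVert\mathbf{c}\rVert&*\\0&\tilde{A}\end{smallmatrix}\right]$ and $B=\left[\begin{smallmatrix}0&*\\0&\tilde{B}\end{smallmatrix}\right]$, where $\tilde{A},\tilde{B}$ are the upper-triangular pieces of $\tilde{U}^{*}X''$, so $A$ and $B$ are upper triangular and $X=QR$ is the desired factorization. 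For the $\mathbf{M}_{N}(\mathbb{H})$ statement, run the above on $\chi(X)$: the symplectic unitary $Q$ equals $\chi(U)$ for a unitary $U\in\mathbf{M}_{N}(\mathbb{H})$ (Lemma~\ref{lem:SymplecticUnitaryStructure} together with injectivity of the $*$-homomorphism $\chi$), and $R=\left[\begin{smallmatrix}A&B\\-\overline{B}&\overline{A}\end{smallmatrix}\right]$ with $A,B$ upper triangular equals $\chi(\rho)$ for an upper triangular $\rho\in\mathbf{M}_{N}(\mathbb{H})$; then $\chi(X)=\chi(U)\chi(\rho)=\chi(U\rho)$ and injectivity yields $X=U\rho$.

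The only delicate point is the last bit of bookkeeping: that ``delete coordinates $1$ and $N+1$, recurse, reinsert'' produces an $R$ whose off-diagonal block $B$ is genuinely upper triangular (with $(1,1)$ entry zero) and whose diagonal block $A$ is upper triangular, i.e.\ that the recursion meshes correctly with the $\sharp$-symmetry. This is precisely the reshuffling already used in Theorem~\ref{thm:QuaternionicSchur}, so I expect no real difficulty.
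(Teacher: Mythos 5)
Your proof is correct and takes essentially the same route as the paper's: apply Lemma~\ref{lem:transitivity} to the normalized first column of $X$, note that the quaternionic symmetry (equivalently, commuting with $\mathcal{T}$) automatically zeroes out column $N+1$ as well, and induct on the remaining $2(N-1)$-dimensional block. The paper's own proof leaves the inductive bookkeeping and the extension of $\tilde{U}$ to a symplectic unitary of full size implicit; you have simply written those details out, and they check out.
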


\begin{proof}
(2) follows directly from (1), so we prove (1).

We apply Lemma ~\ref{lem:transitivity} to the first column of $X$
and find $ X=Q_{1}R_{1} $ where 
\[
R_{1}=\left[\begin{array}{cc}
A_{1} & B_{1}\\
-\overline{B_{1}} & \overline{A_{1}}\end{array}\right]
\]
where $A_{1}$ and $B_{1}$ have zeros in their first columns, except
perhaps in the top position. As we did earlier, we can proceed with
a proof by induction.
\end{proof}

\section{Self-dual matrices}

If we study matrices with $X^{\sharp}=X$ then we are no longer working
directly with quaternionic matrices, but as we discuss below, there
is a connection. We discuss a Schur factorization and a structured
polar decomposition for self-dual matrices. The latter is a bit tricky,
so we warm up with a structured polar decomposition for symmetric
complex matrices. 

For dealing with a single self-dual matrix, there is the efficient
Paige / Van Loan algorithm \cite{HastLorTheoryPractice,paigeVanLoan} to implement
the following theorem.

\begin{thm}
\label{thm:self-dual-shur}
Given $X_{1},\ldots,X_{k}$ in $\mathbf{M}_{2N}(\mathbb{C})$
that commute pairwise and are self-dual, there is a single symplectic
unitary $U$ so that for all $j,$
\[
U^{*}X_{j}U=\left[\begin{array}{cc}
T_{j} & C_{j}\\
0 & T_{j}^{\mathrm{T}}\end{array}\right].
\]
with $T_{j}$ upper-triangular and the \textbf{$C_{j}$ }skew-symmetric.
\end{thm}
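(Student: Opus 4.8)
The plan is to run an induction on $N$ paralleling the proof of Theorem~\ref{thm:QuaternionicSchur}(1), modified to exploit the symmetry $X^{\sharp}=X$ rather than $X^{*}=X^{\sharp}$. The base case $N\leq1$ is immediate, since a self-dual $2$-by-$2$ matrix is a scalar multiple of the identity. For the inductive step, the commuting family $\{X_{j}\}$ has a common unit eigenvector $\mathbf{v}$, say $X_{j}\mathbf{v}=\lambda_{j}\mathbf{v}$. The key observation is that applying Lemma~\ref{lem:left-right-Lemma}(1) with $X_{j}^{\sharp}=X_{j}$ gives $(\mathcal{T}\mathbf{v})^{*}X_{j}=\lambda_{j}(\mathcal{T}\mathbf{v})^{*}$, so $\mathcal{T}\mathbf{v}$ is a common \emph{left} eigenvector of the $X_{j}$ for the \emph{same} eigenvalues $\lambda_{j}$ --- this is where the self-dual case parts ways with the quaternionic one, in which $\mathcal{T}\mathbf{v}$ was a right eigenvector for $\overline{\lambda_{j}}$. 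By Lemma~\ref{lem:transitivity} I pick a symplectic unitary $U_{1}$ with $U_{1}\mathbf{e}_{1}=\mathbf{v}$, so that $U_{1}\mathbf{e}_{N+1}=\mathcal{T}\mathbf{v}$ by Lemma~\ref{lem:SymplecticUnitaryStructure}, and set $Y_{j}=U_{1}^{*}X_{j}U_{1}$. Conjugation by a symplectic unitary preserves the relation $(\,\cdot\,)^{\sharp}=(\,\cdot\,)$ (by the listed axioms of $\sharp$ and the identity $U_{1}^{\sharp}=U_{1}^{*}$ from Lemma~\ref{lem:SymplecticUnitaryStructure}), so the $Y_{j}$ are again commuting and self-dual, and now $Y_{j}\mathbf{e}_{1}=\lambda_{j}\mathbf{e}_{1}$ while $\mathbf{e}_{N+1}^{*}Y_{j}=\lambda_{j}\mathbf{e}_{N+1}^{*}$: column $1$ and row $N+1$ of each $Y_{j}$ are cleaned out.

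Next I compress to $S=\operatorname{span}\{\mathbf{e}_{i}:i\neq1,\,N+1\}$, a $2(N-1)$-dimensional $\mathcal{T}$-invariant subspace on which, after the obvious reindexing, the restricted $\mathcal{T}$ is the standard time-reversal. Let $Y_{j}'$ be the compression of $Y_{j}$ to $S$. Writing $Y_{j}$ in $N$-by-$N$ blocks, self-duality forces $Y_{j}'=\left[\begin{smallmatrix}A_{j} & B_{j}\\C_{j} & A_{j}^{\mathrm{T}}\end{smallmatrix}\right]$ with $B_{j},C_{j}$ skew-symmetric, so $Y_{j}'$ is self-dual in $\mathbf{M}_{2(N-1)}(\mathbb{C})$. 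The crucial point, which I expect to be the heart of the argument, is that the $Y_{j}'$ still \emph{commute}: for $p,q\in S$ the entry $(Y_{j}Y_{\ell})_{pq}$ differs from $(Y_{j}'Y_{\ell}')_{pq}$ only by $(Y_{j})_{p1}(Y_{\ell})_{1q}+(Y_{j})_{p,N+1}(Y_{\ell})_{N+1,q}$, and here $(Y_{j})_{p1}=0$ because column $1$ of $Y_{j}$ equals $\lambda_{j}\mathbf{e}_{1}$, while $(Y_{\ell})_{N+1,q}=0$ because row $N+1$ of $Y_{\ell}$ equals $\lambda_{\ell}\mathbf{e}_{N+1}^{*}$; so both products vanish and $Y_{j}'Y_{\ell}'=Y_{\ell}'Y_{j}'$. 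In the quaternionic theorem both of the offending columns were cleaned out and the compressed blocks sat in the corner of a block-upper-triangular matrix; in the self-dual case one column and one row are cleaned, and it is precisely this asymmetry --- one from a right eigenvector and one from a left eigenvector --- that makes the bookkeeping close.

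Applying the inductive hypothesis to $\{Y_{j}'\}$ yields a symplectic unitary $V$ of $S$ with $V^{*}Y_{j}'V=\left[\begin{smallmatrix}T_{j}' & *\\0 & (T_{j}')^{\mathrm{T}}\end{smallmatrix}\right]$, each $T_{j}'$ upper-triangular. I lift $V$ to the symplectic unitary $\widehat{V}$ of $\mathbb{C}^{2N}$ that fixes $\mathbf{e}_{1}$ and $\mathbf{e}_{N+1}$ and acts as $V$ on $S$; since $\operatorname{span}\{\mathbf{e}_{1},\mathbf{e}_{N+1}\}$ and $S$ are $\mathcal{T}$-invariant and $\widehat{V}$-invariant and $\widehat{V}$ commutes with $\mathcal{T}$ on each, it commutes with $\mathcal{T}$ globally, so $U=U_{1}\widehat{V}$ is symplectic unitary. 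Put $W_{j}=U^{*}X_{j}U=\widehat{V}^{*}Y_{j}\widehat{V}$. Then $W_{j}$ is self-dual; column $1$ of $W_{j}$ is still $\lambda_{j}\mathbf{e}_{1}$, row $N+1$ is still $\lambda_{j}\mathbf{e}_{N+1}^{*}$, and the compression of $W_{j}$ to $S$ is $V^{*}Y_{j}'V$, so the bottom-left $N$-by-$N$ block of $W_{j}$ is zero (row $N+1$, column $1$, and the vanishing lower-left corner of $V^{*}Y_{j}'V$ account for all its entries) and its top-left block $T_{j}$ is upper-triangular ($\lambda_{j}$ followed by $T_{j}'$). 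Finally, a self-dual matrix whose bottom-left block vanishes and whose top-left block $T_{j}$ is upper-triangular must be $\left[\begin{smallmatrix}T_{j} & C_{j}\\0 & T_{j}^{\mathrm{T}}\end{smallmatrix}\right]$ with $C_{j}$ skew-symmetric, since self-duality forces the bottom-right block to be $T_{j}^{\mathrm{T}}$ and the top-right block to be skew. This closes the induction, and the common eigenvalues $\lambda_{j}$ duly appear doubled on the diagonals of $T_{j}$ and $T_{j}^{\mathrm{T}}$, as Kramers degeneracy requires.
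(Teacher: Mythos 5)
Your proposal is correct and follows essentially the same route as the paper's proof: a common eigenvector $\mathbf{v}$, its Kramers partner $\mathcal{T}\mathbf{v}$ serving as a common left eigenvector for the same $\lambda_j$ via Lemma~\ref{lem:left-right-Lemma}(1), a symplectic change of basis clearing column $1$ and row $N+1$, and induction on the self-dual compression. You merely spell out the bookkeeping (the entrywise check that the compressions still commute, the lifting of the inductive unitary, the final block structure) that the paper dispatches with ``basic facts about block triangular matrices'' and ``a simple induction.''
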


\begin{proof}
Let $\mathbf{v}$ be a nonzero unit vector so that
$X_{j}\mathbf{v}=\lambda_{j}\mathbf{v}$
for all $j$. By Lemma~\ref{lem:left-right-Lemma},
\[
X_{j}^{*}\left(\mathcal{T}\mathbf{v}\right)
=\overline{\lambda_{j}}\left(\mathcal{T}\mathbf{v}\right).
\]
There is a symplectic unitary $U_{1}$ so that $U_{1}\mathbf{e}_{1}=\mathbf{v}$
and $U_{1}\mathbf{e}_{N+1}=\mathcal{T}\mathbf{v}.$ Let $Y_{j}=U_{1}^{*}X_{j}U_{1}.$
Then 
\[
Y_{j}\mathbf{e}_{1}=\lambda_{j}\mathbf{e}_{1}
\]
and
\[
Y_{j}^{*}\mathbf{e}_{N+1}=\overline{\lambda_{j}}\mathbf{e}_{N+1}.
\]
Since $\mathbf{e}_{N+1}$ is real, we take adjoint and discover
\[
\mathbf{e}_{N+1}^{\mathrm{T}}Y_{j}=\lambda_{j}\mathbf{e}_{N+1}^{\mathrm{T}}.
\]
This means the column $1$ and row $N+1$ are all but zeroed-out,
\[
Y_{j}=\left[\begin{array}{cccc}
\lambda_{j} & * & * & *\\
0 & A_{j} & * & C_{j}\\
0 & 0 & \lambda_{j} & 0\\
0 & B_{j} & * & D_{j}\end{array}\right].
\]
Basic facts about block triangular matrices show that the 
\[
Z_{j}=\left[\begin{array}{cc}
A_{j} & C_{j}\\
B_{j} & D_{j}\end{array}\right]
\]
are a commuting family of matrices, and since $U_{1}$ was chosen
to be symplectic, the $Z_{j}$ will be self-dual. As simple induction
now finishes the proof.
\end{proof}

A promising numerical technique for the joint diagonalization of two
commuting self-dual self-adjoint matrices $H$ and $K$ would be to
form the normal self-dual matrix $X=A+iB$ and apply Paige / Van Loan
to reduce to block diagonal form. Then apply ordinary Schur decomposition.
This technique was used in \cite[\S 9]{HastLorTheoryPractice} to
diagonalize matrices that were exactly self-dual and approximately
unitary. This idea is mentioned in \cite{bunse1993numerical}, section
6.5. 

It is not hard to show that the minimal polar decomposition, the one
that is unique and can involve a partial isometry, preserves in some
way just about any symmetry thrown at it. This is because the functional
calculus interacts well with the dual operation \cite{LoringSorensenRealLins},
as well as with the transpose. It is a bit harder to figure what happens
for the maximal polar decomposition. By the minimal polar decomposition
is meant the factorization that is unique and can involve a partial
isometry. By the maximal polar decomposition is meant the factorization
that involves a unitary.

We begin with the easier result about the polar decomposition of complex
symmetric matrices.

\begin{thm}
\label{thm:SymmetricPolar} 
If $X$ in $\mathbf{M}_{2n}(\mathbb{C})$
satisfies $X^{\mathrm{T}}=X$ then there is a unitary $U$ so that
$U^{\mathrm{T}}=U$ and
$ X=U\left|X\right|$.
\end{thm}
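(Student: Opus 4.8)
The plan is to start from the minimal polar decomposition $X = W|X|$, where $|X| = (X^*X)^{1/2}$ and $W$ is the partial isometry with $\ker W = \ker|X| = \ker X$. The key structural observation is that the transpose interacts with this decomposition in exactly the way the dual operation did in the quaternionic case: since $X^{\mathrm{T}} = X$, we have $X^*X = \overline{X}X = \overline{X^*X}{}^{\mathrm{T}}\cdot(\text{bookkeeping})$ — more precisely $(X^*X)^{\mathrm{T}} = X^{\mathrm{T}}(X^*)^{\mathrm{T}} = X \overline{X} = \overline{X^*X}$, so $X^*X$ is a positive matrix whose transpose equals its conjugate. Then, working with monomials and polynomials and passing to limits in the functional calculus, one shows $f(X^*X)^{\mathrm{T}} = \overline{f(X^*X)}$ for any continuous $f$ vanishing at $0$; in particular $|X|^{\mathrm{T}} = \overline{|X|}$, and the same for $f(\lambda) = \lambda^{-1/2}$ on the nonzero spectrum. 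From $W = Xf(X^*X)$ one then computes $W^{\mathrm{T}} = f(X^*X)^{\mathrm{T}} X^{\mathrm{T}} = \overline{f(X^*X)}\, X = \overline{W}$ (using $\overline{f(X^*X)}\,X = \overline{f(X^*X)\,\overline{X}} $ and that $\overline{X} = \overline{X}{}$, with the symmetry $X = X^{\mathrm{T}}$ feeding in), so the partial isometry $W$ satisfies $W^{\mathrm{T}} = \overline{W}$, equivalently $W^* = \overline{W^{\mathrm{T}}} = \overline{\overline{W}} = W$ on its support — i.e.\ $W$ is a symmetric partial isometry.

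Next I would upgrade the symmetric partial isometry $W$ to a symmetric unitary $U$ agreeing with $W$ on $(\ker X)^\perp$. Here is where I expect the one genuinely new ingredient relative to the quaternionic story: I need the analogue of Lemma~\ref{lem:IsometriesExtendQuaterionically}, but for the symmetry $W^{\mathrm{T}} = \overline{W}$ rather than $W^* = W^\sharp$. The roles of $\mathcal{T}$ are played instead by plain conjugation $\mathbf{v} \mapsto \overline{\mathbf{v}}$: one checks that $\ker W$ is invariant under conjugation (because $W\overline{\mathbf{v}} = \overline{W^{\mathrm{T}}}\,\overline{\mathbf{v}} = \overline{W^{\mathrm{T}}\mathbf{v}}$, and $W^{\mathrm{T}}$ has the same kernel as $W$ since $W$ is a partial isometry), and likewise $\ker W^*$ is conjugation-invariant. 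Unlike the quaternionic case there is no parity obstruction — one simply picks a real orthonormal basis $\mathbf{v}_1,\dots,\mathbf{v}_m$ of $\ker X$ and a real orthonormal basis $\mathbf{w}_1,\dots,\mathbf{w}_m$ of $\ker X^* = \overline{\ker X^{\mathrm{T}}}$ (note $\ker X^* = \overline{\ker X}$ here by symmetry, so one may even take $\mathbf{w}_k = \mathbf{v}_k$), and extends $W$ by sending $\mathbf{v}_k \mapsto \mathbf{w}_k$. Because both bases are real, the extension $U$ satisfies $U^{\mathrm{T}} = \overline{U}$, hence $U^* = U$, and $U$ is unitary since $W$ was an isometry off $\ker X$.

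Finally, $X = W|X| = U|X|$ because $W$ and $U$ agree on $(\ker X)^\perp = (\ker|X|)^\perp$, which contains the range of $|X|$. So $X = U|X|$ with $U$ symmetric unitary, as claimed.

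The main obstacle is the extension lemma in the middle paragraph — making precise that $\ker W$ and $\ker W^*$ are conjugation-invariant and that real orthonormal bases can be chosen simultaneously so that the glued-together map is genuinely symmetric (not merely symmetric on a subspace). The functional-calculus identity $f(Y)^{\mathrm{T}} = \overline{f(Y)}$ for positive $Y$ with $Y^{\mathrm{T}} = \overline{Y}$ is routine (monomials, then Stone--Weierstrass, then the point that the relevant $f$ is a uniform limit of such polynomials on the spectrum), exactly parallel to the $(f(Y))^\sharp = f(Y^\sharp)$ step used earlier in the excerpt, so I would state it without belaboring it.
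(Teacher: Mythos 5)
Your overall strategy is the paper's: take the minimal polar decomposition $W=Xf(X^*X)$, use functional calculus to transfer the symmetry of $X$ to $W$, and then extend the partial isometry across $\ker W$. But the central symmetry computation is wrong, and the error propagates into the extension step. From $(X^*X)^{\mathrm{T}}=X^{\mathrm{T}}(X^*)^{\mathrm{T}}=X\overline{X}=XX^*$ one gets
\[
W^{\mathrm{T}}=f\bigl((X^*X)^{\mathrm{T}}\bigr)X^{\mathrm{T}}=f(XX^*)X=Xf(X^*X)=W,
\]
i.e.\ $W$ is \emph{symmetric}, $W^{\mathrm{T}}=W$. Your claim $W^{\mathrm{T}}=\overline{W}$ (equivalently $W^*=W$) is false: for $X=\mathrm{diag}(i,1)$, which is symmetric, $|X|=I$ and $W=X$, which satisfies $W^{\mathrm{T}}=W$ but certainly not $W^*=W$. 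The slip is in identifying $\overline{f(X^*X)}\,X$ with $\overline{W}=\overline{X}\,\overline{f(X^*X)}$; the correct identification is $\overline{f(X^*X)}\,X=f(XX^*)X=W$.

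Because of this, the extension step rests on two false premises. First, $\ker W=\ker X$ is \emph{not} conjugation-invariant in general: conjugation carries $\ker X$ onto $\ker X^*=\overline{\ker X}$, and these differ, e.g.\ for the symmetric nilpotent $X=\left[\begin{smallmatrix}1 & i\\ i & -1\end{smallmatrix}\right]$, whose kernel is spanned by $(-i,1)^{\mathrm{T}}$ while $X(i,1)^{\mathrm{T}}\neq 0$. (Your parenthetical justification also uses ``$\ker W^{\mathrm{T}}=\ker W$,'' which is likewise false; $\ker W^{\mathrm{T}}=\overline{\ker W^*}$.) Consequently no real orthonormal basis of $\ker X$ need exist, and the proposed construction of $U$ cannot be carried out. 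The correct extension, which is what the paper does, is simpler: since $W^*=\overline{W^{\mathrm{T}}}=\overline{W}$, any orthonormal basis $\mathbf{v}_1,\dots,\mathbf{v}_m$ of $\ker W$ gives an orthonormal basis $\overline{\mathbf{v}_1},\dots,\overline{\mathbf{v}_m}$ of $\ker W^*=\ker\overline{W}$; defining $V\mathbf{v}_j=\overline{\mathbf{v}_j}$ and $V=0$ on $(\ker W)^{\perp}$ yields $V^{\mathrm{T}}=V$, and $U=W+V$ is the required symmetric unitary. So the needed symmetry of the glue map comes for free from $V\mathbf{v}_j=\overline{\mathbf{v}_j}$, with no reality assumption on the basis.
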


\begin{proof}
Again chose $f$ with $f(0)=0$ and $f(\lambda)=\lambda^{-\frac{1}{2}}$
for every nonzero eigenvalue of $X^{*}X$ and let
\[
W=Xf(X^{*}X).
\]
As always, $W$ is a partial isometry and
$X=WP$ for $P=\left(X^{*}X\right)^{\frac{1}{2}}=\left|X\right|$.
Now we discover 
\[
W^{\mathrm{T}}=f\left(X^{\mathrm{T}}\overline{X}\right)X^{\mathrm{T}}=f\left(XX^{*}\right)X=Xf\left(X^{*}X\right)=W.
\]
To create a unitary $U$ with $X=U\left|X\right|$ we must extend
$W$ to map $\ker(W)$ to $\ker\left(W^{*}\right)$. We can arrange
$U^{\mathrm{T}}=U$ as follows. Let $\mathbf{v}_{1},\dots,\mathbf{v}_{m}$
be an orthonormal basis of $\ker(W)$. Then $\overline{\mathbf{v}_{1}},\dots,\overline{\mathbf{v}_{m}}$
will be an orthonormal basis of $\ker(W^{*})=\ker(\overline{W})$
and we define $V$ to be zero on $\left(\ker(W)\right)^{\perp}$ and
$V\mathbf{v}_{j}=\overline{\mathbf{v}_{j}}.$ Thus $V^{*}$ will be
zero on $\left(\ker(W)\right)^{\perp}$ and 
$V^{*}\overline{\mathbf{v}_{j}}=\mathbf{v}_{j}$,
but the same can be said about $\overline{V}$. That means $V^{\mathrm{T}}=V$
and so $U=W+V$ will be the required symmetric unitary.

Notice there is no structure on $\left|X\right|$, but considered
with $\left|X^{*}\right|$ we get the formula
\begin{equation}
\left|X^{*}\right|=\left|X\right|^{\mathrm{T}}.
\label{eq:TwoAbsoluteValuesRelated}
\end{equation}
\end{proof}

For the self-dual situation, we shall see that a similar
construction works so long as we respect Kramers degeneracy.

\begin{prop}
\label{pro:self-dual-PartialIsomDouble}
If a partial isometry $W$
in $\mathbf{M}_{2N}(\mathbb{C})$ is self-dual, then the initial space
of $W$ will have even dimension and $\mathcal{T}$ will map the initial
space isometrically onto the final space of $W$.
\end{prop}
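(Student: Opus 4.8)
The plan is to unpack the self-dual condition $W^{\sharp} = W$ together with the partial-isometry condition and extract the structure on the initial and final spaces. Recall that the initial space of $W$ is $\left(\ker W\right)^{\perp} = \mathrm{im}(W^*W)$ and the final space is $\left(\ker W^*\right)^{\perp} = \mathrm{im}(WW^*) = \mathrm{im}(W)$. The key identity to exploit is that the dual operation and the adjoint combine into $X^{\sharp *} = -\mathcal{T}\circ X\circ\mathcal{T}$ from Lemma~\ref{lem:QuaternionicCondition}; since $W^{\sharp} = W$ we get $W^* = W^{\sharp *} = -\mathcal{T}\circ W\circ\mathcal{T}$, so conjugating $W$ by $\mathcal{T}$ yields (up to sign) the adjoint $W^*$. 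From this I can compute how $\mathcal{T}$ interacts with the relevant projections.

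First I would show the initial space is $\mathcal{T}$-invariant and has even dimension. The projection onto the initial space is $W^*W$, and from $W^* = -\mathcal{T}W\mathcal{T}$ one computes $W^*W = \mathcal{T} (WW^*)\mathcal{T}$; but also $WW^*$ is the projection onto the final space. Rather than chase this directly, the cleaner route is: apply Lemma~\ref{lem:left-right-Lemma}(2) in the $\lambda = 0$ case to $W^*W$ — which is self-dual and self-adjoint since $W$ is self-dual — to conclude $\ker(W^*W) = \ker(W)$ is $\mathcal{T}$-invariant, hence so is its orthogonal complement the initial space (the $\mathcal{T}$-invariance of orthogonal complements is the small lemma proved just above Corollary in the Jordan section, or follows from \eqref{eq:TRoperatorPreservesOrtho}). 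Evenness of the initial space then follows from $\xi \perp \mathcal{T}\xi$ (equation~\eqref{eq:TR-orthogonality}): pick $\mathbf{v}_1$ in the initial space, then $\mathcal{T}\mathbf{v}_1$ is in it and orthogonal to $\mathbf{v}_1$; pass to the orthogonal complement within the initial space, which is again $\mathcal{T}$-invariant, and iterate to build a basis $\mathbf{v}_1,\dots,\mathbf{v}_m,\mathcal{T}\mathbf{v}_1,\dots,\mathcal{T}\mathbf{v}_m$.

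Next I would show $\mathcal{T}$ maps the initial space onto the final space isometrically. Isometry-up-to-conjugation of $\mathcal{T}$ is automatic from \eqref{eq:TRoperatorPreservesOrtho}, so the real content is that $\mathcal{T}$(initial space) $=$ final space. Here I use that $W$ restricted to the initial space is already an isometry onto the final space, and that $W^* = -\mathcal{T}W\mathcal{T}$. Concretely: if $\xi$ is in the initial space then $W\xi$ is in the final space; I want to see $\mathcal{T}\xi$ is also in the final space. From $W^* = -\mathcal{T}W\mathcal{T}$ we get, for $\eta = \mathcal{T}\xi$, that $W\eta = -\mathcal{T}W^*\mathcal{T}\eta = -\mathcal{T}W^*\xi' $ where $\xi'$ relates to $\xi$; the point is that $W^*$ maps the final space onto the initial space, and $\mathcal{T}$ maps the initial space into itself, so one deduces $\mathcal{T}\xi$ lies in $\mathrm{im}(W) = $ final space. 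Equivalently and perhaps more transparently: the final space is $\left(\ker W^*\right)^{\perp}$, and $\ker(W^*) = \ker(\mathcal{T}W\mathcal{T}) = \mathcal{T}\ker(W)$ since $\mathcal{T}$ is a conjugate-linear bijection; therefore the final space is $\mathcal{T}$ applied to $\left(\ker W\right)^{\perp}$, which is exactly the claim. A dimension count confirms consistency: $\dim_{\mathbb{R}}$ of initial and final spaces agree because $W$ restricts to an isometry between them, matching the evenness already established.

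I expect the main obstacle to be bookkeeping with the conjugate-linearity of $\mathcal{T}$ — in particular, making sure that the identity $\ker(\mathcal{T}W\mathcal{T}) = \mathcal{T}(\ker W)$ and the passage between "$\mathcal{T}$-invariant" and "image under $\mathcal{T}$" are handled without sign errors, and that "initial space $= \left(\ker W\right)^{\perp} = \mathrm{im}(W^*)$" is the version used at each step. None of the individual steps is deep; the risk is conflating initial/final spaces or dropping the minus sign in $W^* = -\mathcal{T}W\mathcal{T}$. Once the kernel descriptions $\ker(W)$ is $\mathcal{T}$-invariant and $\ker(W^*) = \mathcal{T}\ker(W)$ are pinned down, the even-dimensionality and the isometric correspondence both fall out immediately from \eqref{eq:TRoperatorPreservesOrtho} and \eqref{eq:TR-orthogonality}.
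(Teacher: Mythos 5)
There is a genuine gap in your even-dimension argument. You apply Lemma~\ref{lem:left-right-Lemma}(2) to $W^{*}W$ on the grounds that it is ``self-dual and self-adjoint since $W$ is self-dual,'' but $\left(W^{*}W\right)^{\sharp}=W^{\sharp}\left(W^{*}\right)^{\sharp}=W\left(W^{\sharp}\right)^{*}=WW^{*}$, which equals $W^{*}W$ only when $W$ is normal. So $W^{*}W$ is neither self-dual nor quaternionic in general, the hypothesis of Lemma~\ref{lem:left-right-Lemma}(2) fails, and the conclusion you draw from it --- that $\ker(W)$, and hence the initial space, is $\mathcal{T}$-invariant --- is false. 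A concrete counterexample is the rank-two self-dual partial isometry of Lemma~\ref{lem:KramersPartialIsometries} with $\mathbf{v}=\mathbf{e}_{1}$, $\mathbf{w}=\mathbf{e}_{2}$: its initial space is $\mathbb{C}\mathbf{e}_{1}+\mathbb{C}\mathbf{e}_{2}$, while $\mathcal{T}\mathbf{e}_{1}=\mathbf{e}_{3}$. Indeed, your own (correct) second half shows that $\mathcal{T}$ carries the initial space onto the \emph{final} space, and these are in general different subspaces, so the pairing of $\mathbf{v}$ with $\mathcal{T}\mathbf{v}$ \emph{inside} the initial space, on which your evenness argument rests, does not exist.

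The evenness has to come from an antiunitary that genuinely preserves the initial space; the paper uses $J=W^{*}\circ\mathcal{T}$. From $W^{*}=-\mathcal{T}W\mathcal{T}$ one checks that $J$ maps $\left(\ker W\right)^{\perp}$ into itself, that $J^{2}=-1$ there, and that $\left\langle \mathbf{v},J\mathbf{v}\right\rangle =0$, so the initial space decomposes into Kramers pairs $\mathbf{v}_{j},W^{*}\mathcal{T}\mathbf{v}_{j}$ and must have even dimension. Your second part --- $\ker(W^{*})=\mathcal{T}\left(\ker W\right)$ from $W^{*}=-\mathcal{T}W\mathcal{T}$, hence the final space equals $\mathcal{T}$ of the initial space by taking orthogonal complements and using (\ref{eq:TRoperatorPreservesOrtho}) --- is correct, and is arguably a cleaner route to the isometric-onto claim than the paper's eigenvector computation; only the even-dimensionality step needs repair. (Also a small sign slip: $W^{*}W=-\mathcal{T}\left(WW^{*}\right)\mathcal{T}$, not $+\mathcal{T}\left(WW^{*}\right)\mathcal{T}$.)
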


\begin{proof}
Applying Lemma~\ref{lem:left-right-Lemma} to the self-adjoint matrix
$W^{*}W$ we find
\begin{align*}
W^{*}W\mathbf{v}=\mathbf{v} & \implies\left(W^{*}W\right)^{\sharp}\mathcal{T}\mathbf{v}=\mathcal{T}\mathbf{v}\\
 & \implies\left(WW^{*}\right)\mathcal{T}\mathbf{v}=\mathcal{T}\mathbf{v}
\end{align*}
That is, when $\mathbf{v}\in\left(\ker W\right)^{\perp}$ we have
$\mathcal{T}\mathbf{v}\in\left(\ker W^{*}\right)^{\perp}$ and
$\left\Vert W^{*}\mathcal{T}\mathbf{v}\right\Vert =\left\Vert \mathbf{v}\right\Vert $.
This is useful because 
\begin{align*}
\left\langle \mathbf{v},W^{*}\mathcal{T}\mathbf{v}\right\rangle  
 & =\left\langle \mathbf{v},-W^{*}Z\overline{\mathbf{v}}\right\rangle \\
 & =\left\langle -W^{\mathrm{T}}Z\mathbf{v},\overline{\mathbf{v}}\right\rangle \\
 & =\left\langle -ZW\mathbf{v},\overline{\mathbf{v}}\right\rangle \\
 & =\left\langle \mathbf{v},W^{*}Z\overline{\mathbf{v}}\right\rangle \\
 & =-\left\langle \mathbf{v},W^{*}\mathcal{T}\mathbf{v}\right\rangle 
\end{align*}
which means $\mathbf{v}$ and $W^{*}\mathcal{T}\mathbf{v}$ are orthogonal,
and
\begin{align*}
W^{*}\mathcal{T}W^{*}\mathcal{T}\mathbf{v} & =W^{*}Z\overline{W^{*}Z\overline{\mathbf{v}}}\\
 & =W^{*}ZW^{\mathrm{T}}Z\mathbf{v}\\
 & =-W^{*}W^{\sharp}\mathbf{v}\\
 & =-W^{*}W\mathbf{v}\\
 & =-\mathbf{v}.
\end{align*}
If we start with a unit vector $\mathbf{v}$ in $\left(\ker W\right)^{\perp}$
then we end up with an orthogonal pair of unit vectors $\mathbf{v}$
and $\mathbf{w}$ with $W^{*}\mathcal{T}\mathbf{v}=\mathbf{w}$ and
$W^{*}\mathcal{T}\mathbf{w}=-\mathbf{v}$. 

If $\mathbf{q}$ is a vector in $\left(\ker W\right)^{\perp}$ that
is orthogonal to both $\mathbf{v}$ and $\mathbf{w}$ then $W^{*}\mathcal{T}\mathbf{q}$
will be orthogonal to both $W^{*}\mathcal{T}\mathbf{v}$ and $W^{*}\mathcal{T}\mathbf{w}$
since $\mathcal{T}$ preserves orthogonality everywhere and $W^{*}$
preserves the orthogonality of vectors in $\left(\ker W^{*}\right)^{\perp}.$
Thus we can create a basis of $\left(\ker W\right)^{\perp}$ out of
pairs 
\[
\mathbf{v}_{1},W^{*}\mathcal{T}\mathbf{v}_{1},\dots,\mathbf{v}_{m},W^{*}\mathcal{T}\mathbf{v}_{m}.
\]
\end{proof}

We need some examples of self-dual partial isometries. Treating vectors
as $2N$-by-$1$ matrices, if we set
\[
V=\left(\mathcal{T}\mathbf{v}\right)\mathbf{w}^{*}-\left(\mathcal{T}\mathbf{w}\right)\mathbf{v}^{*}
\]
then this rank two (at most) matrix is self-dual since
\begin{align*}
V^{\sharp} & =-Z\left(-Z\overline{\mathbf{v}}\mathbf{w}^{*}+Z\overline{\mathbf{w}}\mathbf{v}^{*}\right)^{\mathrm{T}}Z\\
 & =-Z\left(\overline{\mathbf{w}}\mathbf{v}^{*}Z-\overline{\mathbf{v}}\mathbf{w}^{*}Z\right)Z\\
 & =Z\overline{\mathbf{w}}\mathbf{v}^{*}-Z\overline{\mathbf{v}}\mathbf{w}^{*}\\
 & =-\mathcal{T}(\mathbf{w})\mathbf{v}^{*}+\mathcal{T}(\mathbf{v})\mathbf{w}^{*}\\
 & =V.
\end{align*}
If we start with $\mathbf{v}$ and $\mathbf{w}$ orthogonal, then
$V$ will be the rank-two partial isometry taking $\mathbf{v}$ to
$\mathcal{T}\mathcal{\mathbf{w}}$ and $\mathbf{w}$ to $\mathcal{T}\mathcal{\mathbf{v}}$.
We record this as a lemma that avoids the ugly notation.

\begin{lem}
\label{lem:KramersPartialIsometries}
Suppose $\mathbf{v}$ and \textbf{$\mathbf{w}$}
are orthogonal unit vectors. Then the partial isometry from $\mathbb{C}\mathbf{v}+\mathbb{C}\mathbf{w}$
to $\mathbb{C}\mathcal{T}\mathbf{v}+\mathbb{C}\mathcal{T}\mathbf{w}$
that sends $\mathbf{v}$ to $\mathcal{T}\mathbf{w}$ and $\mathbf{w}$
to $-\mathcal{T}\mathbf{v}$ will be self-dual.
\end{lem}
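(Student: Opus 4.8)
The plan is to recognize the partial isometry described in the statement as (minus) the rank-at-most-two matrix
\[
V=\left(\mathcal{T}\mathbf{v}\right)\mathbf{w}^{*}-\left(\mathcal{T}\mathbf{w}\right)\mathbf{v}^{*}
\]
introduced in the display just before the lemma, and then to inherit self-duality from the block computation already carried out there. First I would evaluate $-V$ on $\mathbf{v}$, on $\mathbf{w}$, and on the orthogonal complement of $\mathbb{C}\mathbf{v}+\mathbb{C}\mathbf{w}$. Since $\mathbf{v}$ and $\mathbf{w}$ are orthonormal we have $\mathbf{v}^{*}\mathbf{v}=\mathbf{w}^{*}\mathbf{w}=1$ and $\mathbf{v}^{*}\mathbf{w}=\mathbf{w}^{*}\mathbf{v}=0$, so a one-line evaluation gives $-V\mathbf{v}=\mathcal{T}\mathbf{w}$, $-V\mathbf{w}=-\mathcal{T}\mathbf{v}$, and $-V$ annihilates every vector orthogonal to both $\mathbf{v}$ and $\mathbf{w}$ because both of the row vectors $\mathbf{v}^{*}$ and $\mathbf{w}^{*}$ do. Hence the operator $P$ named in the lemma is exactly $-V$.

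Next I would check that $P$ genuinely is a partial isometry with the advertised initial and final spaces, rather than merely a contraction of rank at most two. By (\ref{eq:TRoperatorPreservesOrtho}) and the fact that $\mathcal{T}$ preserves norms, the vectors $\mathcal{T}\mathbf{v}$ and $\mathcal{T}\mathbf{w}$ again form an orthonormal pair. Therefore $P$ carries the orthonormal basis $\mathbf{v},\mathbf{w}$ of $\mathbb{C}\mathbf{v}+\mathbb{C}\mathbf{w}$ onto the orthonormal basis $\mathcal{T}\mathbf{w},-\mathcal{T}\mathbf{v}$ of $\mathbb{C}\mathcal{T}\mathbf{v}+\mathbb{C}\mathcal{T}\mathbf{w}$ and is zero on the orthogonal complement, so $P^{*}P$ is the orthogonal projection onto $\mathbb{C}\mathbf{v}+\mathbb{C}\mathbf{w}$, which is idempotent; thus $P$ is a partial isometry.

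Finally, self-duality is immediate from linearity of the dual operation: the block calculation displayed just before the lemma shows $V^{\sharp}=V$, and hence $P^{\sharp}=(-V)^{\sharp}=-V^{\sharp}=-V=P$. I do not anticipate any real obstacle; the only things requiring care are bookkeeping of signs and the placement of conjugates when matching $P$ against $-V$, and confirming via (\ref{eq:TRoperatorPreservesOrtho}) that $\mathcal{T}$ sends the orthonormal pair $\mathbf{v},\mathbf{w}$ to an orthonormal pair, so that the word "partial isometry" is literally justified.
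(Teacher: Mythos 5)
Your proposal is correct and follows the paper's own route: the lemma is proved there by the displayed computation $V^{\sharp}=V$ for $V=(\mathcal{T}\mathbf{v})\mathbf{w}^{*}-(\mathcal{T}\mathbf{w})\mathbf{v}^{*}$, and you identify the operator of the lemma as $\pm V$ and invoke linearity of $\sharp$, exactly as intended. You are in fact slightly more careful than the paper's prose, which asserts that $V$ itself sends $\mathbf{v}\mapsto\mathcal{T}\mathbf{w}$ and $\mathbf{w}\mapsto\mathcal{T}\mathbf{v}$, whereas the computation gives $V\mathbf{v}=-\mathcal{T}\mathbf{w}$ and $V\mathbf{w}=\mathcal{T}\mathbf{v}$, so the lemma's map is $-V$ as you say; and your verification that the map is genuinely a partial isometry, via (\ref{eq:TRoperatorPreservesOrtho}), is a worthwhile addition.
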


\begin{thm}
\label{thm:selfDualPolar} If $X$ in $\mathbf{M}_{2n}(\mathbb{C})$
satisfies $X^{\mathrm{\sharp}}=X$ then there is a unitary $U$ so
that $U^{\mathrm{\sharp}}=U$ and
$X=U\left|X\right|$.
\end{thm}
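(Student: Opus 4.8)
The plan is to mimic the proof of Theorem~\ref{thm:SymmetricPolar} step for step, replacing the transpose symmetry by the dual symmetry and replacing the conjugation trick (which built the extension $V$ on $\ker W$) by the Kramers-pair construction of Lemma~\ref{lem:KramersPartialIsometries}. First I would set $f$ to be continuous on $[0,\infty)$ with $f(0)=0$ and $f(\lambda)=\lambda^{-1/2}$ on the nonzero spectrum of $X^{*}X$, and put $W=Xf(X^{*}X)$, so that $W$ is the partial isometry of the minimal polar decomposition and $X=WP$ with $P=(X^{*}X)^{1/2}=|X|$. Using that the functional calculus commutes with $\sharp$ on positive operators — exactly as in the quaternionic polar lemma, via monomials, polynomials and limits — together with $X^{\sharp}=X$ and hence $(X^{*}X)^{\sharp}=X^{\sharp}X^{*\sharp}=XX^{*}$, I would compute
\[
W^{\sharp}=f\left((X^{*}X)^{\sharp}\right)X^{\sharp}=f(XX^{*})X=Xf(X^{*}X)=W,
\]
so the minimal polar factor $W$ is already self-dual.

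The remaining task is to extend the self-dual partial isometry $W$ to a self-dual unitary $U$ with the same action on $(\ker W)^{\perp}$. Here Proposition~\ref{pro:self-dual-PartialIsomDouble} does the essential work: $\ker W$ has even dimension $2m$, and (applying the same proposition to the self-dual partial isometry whose initial space is $\ker W$, i.e. $I-W^{*}W$, or more concretely repeating the argument there) one gets an orthonormal basis of $\ker W$ of Kramers-pair form $\mathbf v_{1},\mathcal T\mathbf v_{1},\dots,\mathbf v_{m},\mathcal T\mathbf v_{m}$, and likewise an orthonormal basis of $\ker W^{*}$ of the form $\mathbf w_{1},\mathcal T\mathbf w_{1},\dots,\mathbf w_{m},\mathcal T\mathbf w_{m}$. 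For each $k$ let $V_{k}$ be the self-dual rank-two partial isometry of Lemma~\ref{lem:KramersPartialIsometries} from $\mathbb C\mathbf v_{k}+\mathbb C\mathcal T\mathbf v_{k}$ onto $\mathbb C\mathbf w_{k}+\mathbb C\mathcal T\mathbf w_{k}$ — taking $\mathbf v_{k}\mapsto\mathcal T(\mathcal T\mathbf w_{k})\cdot(\pm1)$; one chooses the orientation so that $V_{k}$ maps $\mathbb C\mathbf v_{k}+\mathbb C\mathcal T\mathbf v_{k}$ isometrically onto $\mathbb C\mathbf w_{k}+\mathbb C\mathcal T\mathbf w_{k}$ (using $\mathcal T^{2}=-I$ so that $\mathcal T\mathbf w_{k}$ and $-\mathcal T(\mathcal T\mathbf w_{k})=\mathbf w_{k}$ are the two images). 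Set $V=\sum_{k=1}^{m}V_{k}$; then $V$ is a self-dual partial isometry with initial space $\ker W$ and final space $\ker W^{*}$, and since these spaces are orthogonal to $(\ker W)^{\perp}$ and $(\ker W^{*})^{\perp}$ respectively, $U=W+V$ is a unitary. Linearity of $\sharp$ gives $U^{\sharp}=W^{\sharp}+V^{\sharp}=W+V=U$, and $U|X|=U P=(W+V)P=WP=X$ because $P$ vanishes on $\ker W=\ker P$.

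I would expect the main obstacle to be purely bookkeeping rather than conceptual: getting the orientation right in the rank-two blocks so that the pieces $V_{k}$ genuinely patch together into an \emph{isometry} from $\ker W$ onto $\ker W^{*}$ (not merely a self-dual partial isometry of the correct rank), and checking that the images $\mathbb C\mathbf w_{k}+\mathbb C\mathcal T\mathbf w_{k}$ are mutually orthogonal — which they are, since $\mathbf w_{1},\mathcal T\mathbf w_{1},\dots$ was chosen orthonormal and $\mathcal T$ preserves orthogonality by (\ref{eq:TRoperatorPreservesOrtho}). One should also note, in parallel with (\ref{eq:TwoAbsoluteValuesRelated}), that $|X^{*}|=|X|^{\sharp}$, which follows from $(X^{*}X)^{\sharp}=XX^{*}$ and the fact that $\sharp$ commutes with the square root on positive operators; this records that, as in the symmetric case, no symmetry is imposed on $|X|$ itself, only the relation between $|X|$ and $|X^{*}|$.
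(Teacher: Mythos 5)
The first half of your argument --- $W=Xf(X^{*}X)$ is a self-dual partial isometry with $X=W\left|X\right|$ --- is exactly the paper's and is correct. The gap is in the extension step, where you have put the Kramers structure on the wrong spaces. For a self-dual $W$ the kernel is in general \emph{not} $\mathcal{T}$-invariant: Lemma~\ref{lem:left-right-Lemma}(1) and Proposition~\ref{pro:self-dual-PartialIsomDouble} give $\mathcal{T}(\ker W)=\ker W^{*}$, not $\mathcal{T}(\ker W)=\ker W$; the latter is the hallmark of the quaternionic condition $X^{*}=X^{\sharp}$, not of $X^{\sharp}=X$. (Your fallback of applying the proposition to $I-W^{*}W$ does not help, because $(I-W^{*}W)^{\sharp}=I-WW^{*}$, so that projection is not self-dual.) A concrete counterexample: the rank-two self-dual partial isometry of Lemma~\ref{lem:KramersPartialIsometries} built from $\mathbf{v}=\mathbf{e}_{1}$, $\mathbf{w}=\mathbf{e}_{2}$ in $\mathbb{C}^{4}$ has kernel spanned by $\mathbf{e}_{3},\mathbf{e}_{4}$, while $\mathcal{T}\mathbf{e}_{3}=-\mathbf{e}_{1}$. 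So the bases $\mathbf{v}_{1},\mathcal{T}\mathbf{v}_{1},\dots$ of $\ker W$ and $\mathbf{w}_{1},\mathcal{T}\mathbf{w}_{1},\dots$ of $\ker W^{*}$ that you posit need not exist. Worse, even granting them, the blocks $V_{k}$ you describe cannot be self-dual: by Proposition~\ref{pro:self-dual-PartialIsomDouble} a self-dual partial isometry has final space equal to $\mathcal{T}$ of its initial space, and $\mathcal{T}\left(\mathbb{C}\mathbf{v}_{k}+\mathbb{C}\mathcal{T}\mathbf{v}_{k}\right)=\mathbb{C}\mathbf{v}_{k}+\mathbb{C}\mathcal{T}\mathbf{v}_{k}$ since $\mathcal{T}^{2}=-I$, so such a $V_{k}$ would map back into $\ker W$ rather than onto a piece of $\ker W^{*}$.

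The repair is the paper's arrangement: take an arbitrary orthonormal basis of $\ker W$ and pair its elements arbitrarily as $\mathbf{v}_{1},\mathbf{w}_{1},\dots,\mathbf{v}_{m},\mathbf{w}_{m}$ (possible because $\dim\ker W$ is even); then $\mathcal{T}\mathbf{v}_{1},\mathcal{T}\mathbf{w}_{1},\dots,\mathcal{T}\mathbf{v}_{m},\mathcal{T}\mathbf{w}_{m}$ is an orthonormal basis of $\ker W^{*}$, and Lemma~\ref{lem:KramersPartialIsometries} applied to each pair $(\mathbf{v}_{j},\mathbf{w}_{j})$ gives a self-dual rank-two partial isometry from $\mathbb{C}\mathbf{v}_{j}+\mathbb{C}\mathbf{w}_{j}\subseteq\ker W$ onto $\mathbb{C}\mathcal{T}\mathbf{v}_{j}+\mathbb{C}\mathcal{T}\mathbf{w}_{j}\subseteq\ker W^{*}$, sending $\mathbf{v}_{j}\mapsto\mathcal{T}\mathbf{w}_{j}$ and $\mathbf{w}_{j}\mapsto-\mathcal{T}\mathbf{v}_{j}$ (mind the sign, which the lemma requires for self-duality). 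Summing these and adding $W$ gives the desired self-dual unitary. Your closing observation $\left|X^{*}\right|=\left|X\right|^{\sharp}$ is correct and is the right analogue of (\ref{eq:TwoAbsoluteValuesRelated}).
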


\begin{proof}
Once more $ W=Xf(X^{*}X) $
works to create a partial isometry $W$ with $X=WP$ for $P=\left|X\right|$
and this time we have 
$ W^{\sharp}=W$.
We need a partial isometry from $\ker(W)$ to $\ker\left(W^{*}\right)$
that is self-dual. The dimension of $\ker(W)$ will be even, by 
Lemma~\ref{pro:self-dual-PartialIsomDouble}.
Moreover if 
\[
\mathbf{v}_{1},\mathbf{w}_{1},\dots,\mathbf{v}_{m},\mathbf{w}_{m}
\]
is an orthogonal basis for $\ker(W)$ then 
\[
\mathcal{T}\mathbf{v}_{1},\mathcal{T}\mathbf{w}_{1},\dots,\mathcal{T}\mathbf{v}_{m},\mathcal{T}\mathbf{w}_{m}
\]
 will be an orthogonal basis for $\ker\left(W^{*}\right)$. The needed
self-dual partial isometry $V$ will send $\mathbf{v}_{j}$ to $\mathcal{T}\mathbf{w}_{j}$
and $\mathbf{w}_{j}$ to $\mathcal{T}\mathbf{v}_{j}$ and the self-dual
unitary we use will be $U=W+V$.
\end{proof}

\section{The odd particle causes even degeneracy}

We hope not to scare the mathematical reader with more discussion
of Kramers degeneracy. What Kramers discovered was that for a certain
systems involving a odd number of electrons, the Hamiltonian always
had all eigenvalues with even multiplicity. 

A mathematical manifestation of this is that the tensor product of
two dual operations is the transpose operation in disguise. In contrast
to that, the tensor product of three dual operations is a large dual
operation in disguise. Specifically if we implement an orthogonal
change of basis, the dual operation becomes the transpose.

This is essentially the same as the facts that is more familiar to
mathematicians, that $\mathbb{H}\otimes_{\mathbb{R}}\mathbb{H}\cong\mathbf{M}_{4}(\mathbb{R})$
and $\mathbb{H}\otimes_{\mathbb{R}}\mathbb{H}\otimes_{\mathbb{R}}\mathbb{H}\cong\mathbf{M}_{4}(\mathbb{H})$. 

In the following, we let $Z_{N}$ be as in (\ref{eq:Define-Z}), the
matrix that specified the dual operation.
\begin{lem}
Consider 
\[
U=\frac{1}{\sqrt{2}}\left(I\otimes I-iZ_{N}\otimes Z_{M}\right).
\]
For all $X\in\mathbf{M}_{2N}(C)$ and $Y\in\mathbf{M}_{2M}(\mathbb{C})$,
\[
U^{*}\left(X^{\sharp}\otimes Y^{\sharp}\right)U=\left(U^{*}\left(X\otimes Y\right)U\right)^{\mathrm{T}}.\]
\end{lem}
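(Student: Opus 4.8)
The plan is to strip away the tensor structure, reduce the claim to a statement about a single arbitrary matrix, and then verify two elementary commutation relations for $U$. Set $W=Z_{N}\otimes Z_{M}$. Since $Z_{N}^{\mathrm T}=-Z_{N}$, $Z_{N}^{2}=-I$ and likewise for $Z_{M}$, the mixed-product property $(A\otimes B)(C\otimes D)=AC\otimes BD$ gives $W^{\mathrm T}=W$, $\overline{W}=W$ and $W^{2}=I$, so $W^{*}=W^{-1}=W$. Using the formula $X^{\sharp}=-Z_{N}X^{\mathrm T}Z_{N}$ from the definition of the dual operation, the same mixed-product property gives
\[
X^{\sharp}\otimes Y^{\sharp}
=\bigl(Z_{N}X^{\mathrm T}Z_{N}\bigr)\otimes\bigl(Z_{M}Y^{\mathrm T}Z_{M}\bigr)
=(Z_{N}\otimes Z_{M})\,\bigl(X^{\mathrm T}\otimes Y^{\mathrm T}\bigr)\,(Z_{N}\otimes Z_{M})
=W\,(X\otimes Y)^{\mathrm T}\,W .
\]
Because $\sharp$ is $\mathbb{C}$-linear, both sides of the asserted identity are $\mathbb{C}$-bilinear in $(X,Y)$, and the elementary tensors $X\otimes Y$ span $\mathbf{M}_{2N}(\mathbb{C})\otimes\mathbf{M}_{2M}(\mathbb{C})\cong\mathbf{M}_{4NM}(\mathbb{C})$. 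Hence it suffices to prove
\[
U^{*}\,W\,M^{\mathrm T}\,W\,U=\bigl(U^{*}MU\bigr)^{\mathrm T}
\qquad\mbox{for every }M\in\mathbf{M}_{4NM}(\mathbb{C}).
\]

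Next I would record how $U=\tfrac{1}{\sqrt2}\,(I-iW)$ interacts with transpose, entrywise conjugation and $W$. From $W^{\mathrm T}=\overline{W}=W$ one gets $U^{\mathrm T}=U$ and $\overline{U}=U^{*}=\tfrac{1}{\sqrt2}\,(I+iW)$, hence $(U^{*})^{\mathrm T}=\overline{U}=U^{*}$. From $W^{2}=I$ one gets
\[
U^{*}W=\tfrac{1}{\sqrt2}\,(W+iI)=iU,
\qquad
WU=\tfrac{1}{\sqrt2}\,(W-iI)=-iU^{*}.
\]
(As a sanity check, $U^{*}U=\tfrac12(I+W^{2})=I$, so $U$ is unitary, consistent with the notation $U^{*}$; this is not needed below.)

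Finally, for any $M\in\mathbf{M}_{4NM}(\mathbb{C})$,
\[
U^{*}\,W\,M^{\mathrm T}\,W\,U=(U^{*}W)\,M^{\mathrm T}\,(WU)=(iU)\,M^{\mathrm T}\,(-iU^{*})=U\,M^{\mathrm T}\,U^{*},
\]
while
\[
\bigl(U^{*}MU\bigr)^{\mathrm T}=U^{\mathrm T}\,M^{\mathrm T}\,(U^{*})^{\mathrm T}=U\,M^{\mathrm T}\,U^{*},
\]
using $U^{\mathrm T}=U$ and $(U^{*})^{\mathrm T}=U^{*}$, so the two agree and the lemma follows. The only genuinely non-mechanical step is the bilinearity-and-spanning reduction to a single matrix $M$; after that the identity $i\cdot(-i)=1$ does all the work, so I do not expect a real obstacle, only the need to keep careful track of transposes, conjugates, and factors of $i$ in the tensor algebra.
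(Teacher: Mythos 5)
Your proof is correct and follows essentially the same route as the paper: both arguments rest on $X^{\sharp}\otimes Y^{\sharp}=W(X\otimes Y)^{\mathrm T}W$ with $W=Z_{N}\otimes Z_{M}$, on $U^{\mathrm T}=U$, and on absorbing the factors of $W$ into $U$ (the paper via $U^{2}=-iW$, you via the equivalent relations $U^{*}W=iU$ and $WU=-iU^{*}$). The bilinearity-and-spanning reduction is superfluous—since the lemma only concerns elementary tensors you may simply substitute $M=X\otimes Y$—but it does no harm.
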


\begin{proof}
Since $Z_{K}^{\mathrm{T}}=-Z_{K}$ we see $U=U^{\mathrm{T}}$. Also
\begin{align*}
U^{*}U 
 & =\frac{1}{2}\left(I\otimes I+iZ_{N}\otimes Z_{M}\right)\left(I\otimes I-iZ_{N}\otimes Z_{M}\right)\\
 & =\frac{1}{2}\left(I\otimes I+\left(Z_{N}\otimes Z_{M}\right)^{2}\right)=I\otimes I
 \end{align*}
so $U$ is a unitary and $\overline{U}=U^{-1}$.

Since $U^{2}= -iZ_{N}\otimes Z_{M} $
we find
\begin{align*}
U^{*}\left(X^{\sharp}\otimes Y^{\sharp}\right)U & =\overline{U}\left(X^{\sharp}\otimes Y^{\sharp}\right)U\\
 & =\overline{U}\left(-iZ_{N}\otimes Z_{M}\right)\left(X^{\mathrm{T}}\otimes Y^{\mathrm{T}}\right)\left(iZ_{N}\otimes Z_{M}\right)U\\
 & =\overline{U}U^{2}\left(X^{\mathrm{T}}\otimes Y^{\mathrm{T}}\right)\overline{U}^{2}U\\
 & =\left(U^{*}\left(X\otimes Y\right)U\right)^{\mathrm{T}}.
\end{align*}
\end{proof}

We have refrained from discussing $C^{*}$-algebras, but here they
add clarity. What the lemma is showing is
\[
\left(\mathbf{M}_{2N}(\mathbb{C})\otimes\mathbf{M}_{2M}(\mathbb{C}),\sharp\otimes\sharp\right)
\cong
\left(\mathbf{M}_{2(M+N)}(\mathbb{C}),\mathrm{T}\right).
\]

\begin{lem}
For all $X\in\mathbf{M}_{2N}(C)$ and $Y\in\mathbf{M}_{2M}(\mathbb{C})$,
\[
X^{\mathrm{T}}\otimes Y^{\sharp}=\left(X\otimes Y\right)^{\mathrm{\sharp}}
\]
where the $\sharp$ on the right is taken with respect to $I\otimes Z_{M}$.
\end{lem}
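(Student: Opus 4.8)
The plan is to unwind both sides using the second, matrix-based description of the dual operation, $W^{\sharp}=-ZW^{\mathrm{T}}Z$, and then compare. Writing $\widetilde{Z}=I_{2N}\otimes Z_{M}$ for the matrix that defines the larger dual operation on $\mathbf{M}_{2N}(\mathbb{C})\otimes\mathbf{M}_{2M}(\mathbb{C})=\mathbf{M}_{4NM}(\mathbb{C})$, the right-hand side is by definition $(X\otimes Y)^{\sharp}=-\widetilde{Z}(X\otimes Y)^{\mathrm{T}}\widetilde{Z}$, while the left-hand side, after expanding the ordinary dual on the second factor, is $X^{\mathrm{T}}\otimes Y^{\sharp}=X^{\mathrm{T}}\otimes\bigl(-Z_{M}Y^{\mathrm{T}}Z_{M}\bigr)$.

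First I would record the two elementary tensor identities needed: the transpose of a tensor product factors, $(X\otimes Y)^{\mathrm{T}}=X^{\mathrm{T}}\otimes Y^{\mathrm{T}}$, and the mixed-product rule $(A\otimes B)(C\otimes D)=(AC)\otimes(BD)$. Applying these to the right-hand side gives
\[
-\widetilde{Z}(X\otimes Y)^{\mathrm{T}}\widetilde{Z}
=-(I\otimes Z_{M})(X^{\mathrm{T}}\otimes Y^{\mathrm{T}})(I\otimes Z_{M})
=-(IX^{\mathrm{T}}I)\otimes(Z_{M}Y^{\mathrm{T}}Z_{M})
=X^{\mathrm{T}}\otimes\bigl(-Z_{M}Y^{\mathrm{T}}Z_{M}\bigr),
\]
where in the last step the overall minus sign is absorbed into the second tensor factor. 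This is precisely $X^{\mathrm{T}}\otimes Y^{\sharp}$, so the identity holds.

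There is no real obstacle here; the only points requiring a moment's care are the placement of the sign — that $-(A\otimes B)$ may be rewritten as $A\otimes(-B)$ — and the observation that the identity block appearing in $\widetilde{Z}$ is $I_{2N}$, so conjugating the first factor by it does nothing, which is why the transpose (rather than another dual) survives on that factor. It is perhaps worth remarking, although it is not needed for the proof, that $\widetilde{Z}=I\otimes Z_{M}$ satisfies $\widetilde{Z}^{\mathrm{T}}=-\widetilde{Z}$ and $\widetilde{Z}^{2}=-I$, the structural properties alluded to in the remark following the definition of $Z$, so the $\sharp$ on the right is a legitimate variant dual operation of the same type.
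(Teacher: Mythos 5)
Your proposal is correct and follows essentially the same route as the paper: expand $(X\otimes Y)^{\sharp}=-(I\otimes Z_{M})(X\otimes Y)^{\mathrm{T}}(I\otimes Z_{M})$, factor the transpose across the tensor product, and apply the mixed-product rule to absorb the sign and the $Z_{M}$ factors into the second slot. The extra remarks about the sign placement and the structural properties of $I\otimes Z_{M}$ are accurate but not needed.
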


\begin{proof}
This is much simpler, as 
\begin{align*}
\left(X\otimes Y\right)^{\mathrm{\sharp}}
 & =-\left(I\otimes Z_{M}\right)\left(X\otimes Y\right)^{\mathrm{\mathrm{T}}}\left(I\otimes Z_{M}\right)\\
 & =-\left(I\otimes Z_{M}\right)\left(X^{\mathrm{T}}\otimes Y^{\mathrm{T}}\right)\left(I\otimes Z_{M}\right)\\
 & =X^{\mathrm{T}}\otimes\left(-Z_{M}Y^{\mathrm{T}}Z_{M}\right).
\end{align*}
\end{proof}

Together, these lemmas show us that a tensor product
of three $\left(\mathbf{M}_{2N_{j}}(\mathbb{C}),\sharp\right)$
leads to something isomorphic to 
\[
\left(\mathbf{M}_{2(N_{1}+N_{2}+N_{3})}(\mathbb{C}),\sharp\right).
\]

\section{Acknowledgements}
The author gratefully
aknowleges the guidance and assistence received from mathematicians
and physicists, in particular Vageli
Coutsias, Matthew Hastings and Adam S\o rensen. 

This work was partially supported by a grant from the Simons Foundation
(208723 to Loring), and by the Efroymson fund at the University of New
Mexico.


\begin{thebibliography}{10}

\bibitem{bunse1989quaternion}
{\sc A.~Bunse-Gerstner, R.~Byers, and V.~Mehrmann}, {\em A quaternion {QR}
  algorithm}, Numerische Mathematik, 55 (1989), pp.~83--95.

\bibitem{bunse1993numerical}
{\sc A.~Bunse-Gerstner, R.~Byers, and V.~Mehrmann}, {\em Numerical methods for
  simultaneous diagonalization}, SIAM J. Matrix Anal. Appl., 14 (1993),
  pp.~927--949.

\bibitem{Pedersen-Factorization}
{\sc G.~K. Pedersen}, {\em Factorization in {$C\sp \ast$}-algebras},
  Exposition. Math., 16 (1998), pp.~145--156.
  
  
\bibitem{FarenickSpectralThmquatern}
{\sc D.~R. Farenick and B.~A.~F. Pidkowich}, {\em The spectral theorem in
  quaternions}, Linear Algebra Appl., 371 (2003), pp.~75--102.

\bibitem{HastLorTheoryPractice}
{\sc M.~B. Hastings and T.~A. Loring}, {\em Topological insulators and {$C\sp
  *$}-algebras: Theory and numerical practice}, Ann. Physics, 326 (2011),
  pp.~1699--1759.

\bibitem{kramers1930theorie}
{\sc H.~Kramers}, {\em Th{\'e}orie g{\'e}n{\'e}rale de la rotation
  paramagn{\'e}tique dans les cristaux}, Proc. Acad. Amst, 33 (1930),
  pp.~959--972.

\bibitem{Lee_JordanFormQuaternions}
{\sc H.~C. Lee}, {\em Eigenvalues and canonical forms of matrices with
  quaternion coefficients}, Proc. Roy. Irish Acad. Sect. A., 52 (1949),
  pp.~253--260.

\bibitem{LoringSorensenRealLins}
{\sc T.~A. Loring and A.~S{\o}rensen}, {\em Almost commuting self-adjoint
  matrices---the real and self-dual cases}.
\newblock arxiv:1012.3494.

\bibitem{mehta2004random}
{\sc M.~Mehta}, {\em Random matrices}, Academic press, 2004.

\bibitem{paigeVanLoan}
{\sc C.~Paige and C.~Van~Loan}, {\em A {{S}chur decomposition for Hamiltonian
  matrices}}, Linear Algebra Appl., 41 (1981), pp.~11--32.

\bibitem{wiegmannMatricesQuaternion}
{\sc N.~Wiegmann}, {\em Some theorems on matrices with real quaternion
  elements}, Canad. J. Math, 7 (1955), pp.~191--201.

\bibitem{wigner1932uber}
{\sc E.~Wigner}, {\em Uber die operation der zeitumkehr in der
  quantenmechanik}, Nach. Ges. Wiss. Gdtt, 32 (1932), pp.~546--559.

\bibitem{zhang1997quaternions}
{\sc F.~Zhang}, {\em Quaternions and matrices of quaternions}, Linear Algebra
  Appl., 251 (1997), pp.~21--57.

\bibitem{zhang2001jordan}
{\sc F.~Zhang and Y.~Wei}, {\em Jordan canonical form of a partitioned complex
  matrix and its application to real quaternion matrices}, Communications in
  Algebra, 29 (2001), pp.~2363--2375.
  
  
\end{thebibliography}
\end{document}